\DeclareMathOperator{\rk}{rank}	
\DeclareMathOperator{\tr}{tr}	    
\DeclareMathOperator{\vecc}{vec}	    
\DeclareMathOperator{\mat}{mat}	    
\DeclarePairedDelimiter\ceil{\lceil}{\rceil}    
\DeclarePairedDelimiter\floor{\lfloor}{\rfloor} 
\newcommand{\RR}{\mathbb R}
\newcommand\mb{\mathbf}
\DeclarePairedDelimiter{\norm}{\lVert}{\rVert}
\theoremstyle{plain}
\newtheorem{theorem}{Theorem}
\newtheorem{proposition}{Proposition}
\newtheorem{corollary}{Corollary}
\newtheorem{lemma}{Lemma}
\theoremstyle{definition}
\newtheorem{definition}{Definition}
\newtheorem*{definition*}{Definition}
\theoremstyle{remark}
\title{Algorithmic Regularization in Tensor Optimization: Towards a Lifted Approach in Matrix Sensing}
\author{%
  Ziye Ma\\
  Department of EECS\\
  UC Berkeley\\
  \texttt{ziyema@berkeley.edu} \\
  \And
  Javad Lavaei \\
  Department of IEOR \\
  UC Berkeley\\
  \texttt{lavaei@berkeley.edu} \\
  \And
  Somayeh Sojoudi \\
  Department of EECS, ME \\
  UC Berkeley \\
  \texttt{sojoudi@berkeley.edu} \\
}
\begin{document}

\maketitle

\begin{abstract}
Gradient descent (GD) is crucial for generalization in machine learning models, as it induces implicit regularization, promoting compact representations. In this work, we examine the role of GD in inducing implicit regularization for tensor optimization, particularly within the context of the lifted matrix sensing framework. This framework has been recently proposed to address the non-convex matrix sensing problem by transforming spurious solutions into strict saddles when optimizing over symmetric, rank-1 tensors. We show that, with sufficiently small initialization scale, GD applied to this lifted problem results in approximate rank-1 tensors and critical points with escape directions. Our findings underscore the significance of the tensor parametrization of matrix sensing, in combination with first-order methods, in achieving global optimality in such problems.

\end{abstract}

\section{Introduction}
This paper is dedicated to addressing the non-convex problem of matrix sensing, which has numerous practical applications and is rich in theoretical implications. Its canonical form can be written as:
\begin{align} \label{eq:SDP main}
    \mathrm{find}&\quad M\in\mathbb{R}^{n\times n} \\
    \mathrm{s.t.}& \quad \mathcal{A}(M) = \mathcal{A}(M^*) \quad \rk(M) \leq r, M \succeq 0 \notag .
    \notag
\end{align}
$\mathcal{A}(\cdot): \RR^{n \times n} \mapsto \RR^m$ is a linear operating consisting of $m$ sensing matrices $\{A_i\}_{i=1}^m \in \RR^{n \times n}$ where $\mathcal{A}(M)= [ \langle A_1, M \rangle, \dots, \langle A_m, M \rangle]^T$. The sensing matrices and the measurements $b = \mathcal{A}(M^*)$ are given, while $M^*$ is an unknown low-rank matrix to be recovered from the measurements. The true rank of $M^*$ is bounded by $r$, usually much smaller than the problem size $n$. More importantly, since $\mathcal{A}$ is linear, one can replace $A_i$ with $(A_i+A_i^\top)/2$ without changing $b$, and therefore all sensing matrices can be assumed to be symmetric. 

The aforementioned problem serves as an extension of both compressed sensing \cite{donoho2006compressed}, which is widely applied in the field of medical imaging, and matrix completion \cite{candes2009exact, candes2010power}, which possesses an array of notable applications \cite{nguyen2019low}. Additionally, this problem emerges in a variety of real-world situations such as phase retrieval \cite{singer2011angular, boumal2016nonconvex, shechtman2015phase}, motion detection \cite{fattahi2020exact}, and power system state estimation \cite{zhang2017conic, jin2019towards}. A recent study by \cite{molybog2020conic} established that any polynomial optimization problem can be converted into a series of problems following the structure of \eqref{eq:SDP main}, thereby underscoring the significance of investigating this specific non-convex formulation. Within the realm of contemporary machine learning, \eqref{eq:SDP main} holds relevance as it is equivalent to the training problem for a two-layer neural network with quadratic activations \cite{li2018algorithmic}. In this context, $m$ denotes the number of training samples, $r$ is the size of the hidden layer, and the sensing matrices $A_i=x_ix_i^\top$ are rank-1, with $x_i$ representing the $i^{th}$ datapoint.

To solve \eqref{eq:SDP main}, an increasingly popular approach is the Burer-Monteiro (BM) factorization \cite{burer2003nonlinear}, in which the low-rank matrix $M$ is factorized into $M = XX^\top$ with $X \in \RR^{n \times r}$, thereby omitting the constraint, making it amenable to simple first-order methods such as gradient descent (GD), while scaling with $\mathcal{O}(nr)$ instead of $\mathcal{O}(n^2)$. The formulation can be formally stated as follows:
\begin{equation}\label{eq:unlifted_main}
	\min_{X \in \mathbb{R}^{n \times r}} f(X) \coloneqq \frac{1}{2} \|\mathcal{A}(XX^T) - b\|^2 = \frac{1}{2} \|\mathcal{A}(XX^T-ZZ^\top)\|^2 \quad \text{(Unlifted Problem)}
\end{equation}
with $Z \in \RR^{n \times r}$ being any ground truth representation such that $M^* = ZZ^\top$. Since \eqref{eq:unlifted_main} is a non-convex problem, it can have spurious local minima\footnote{A spurious point satisfies first-order and second-order necessary conditions but is not a global minimum.}, making it difficult to recover $M^*$ in general. The pivotal concept in solving \eqref{eq:SDP main} and \eqref{eq:unlifted_main} to optimality is the notion of Restricted Isometry Property (RIP), which measures the proximity between $\|\mathcal{A}(M)\|^2_F$ and $\|M\|^2_F$ for all low-rank matrices $M$. This proximity is captured by a constant $\delta_p$, where $\delta_{p} = 0$ means $\mathcal{A}(M)=M$ for matrices up to rank $p$, leading to exact isometry case, and $\delta_p \rightarrow 1$ implying a problematic scenario in which the proximity error is large. For a precise definition,  please refer to Appendix \ref{sec:app_def}. 

Conventional wisdom suggests that there is a sharp bound on the RIP constant that controls the recoverability of $M^*$, with $1/2$ being the bound for \eqref{eq:unlifted_main}. \cite{zhang2019sharp,ma2022sharp} prove that if $\delta_{2r} < 1/2$, then all local minimizers are global minimizers, and conversely if $\delta_{2r} \geq 1/2$, counterexamples can be easily established. Similar bounds of $1/3$ are also derived for general objectives \cite{ha2020equivalence,zhang2021general}, demonstrating the importance of the notion of RIP. However, more recent studies reveal that the technique of over-parametrization (by using $X \in \RR^{n \times r_{\text{search}}}$ instead, with $r_{\text{search}} > r$) can take the sharp RIP bound to higher values \cite{zhang2021sharp,zhang2022improved}. Recently, it has also been shown that using a semidefinite programming (SDP) formulation (convex relaxation) can lead to guaranteed recovery with a larger RIP bound that approaches 1 in the transition to the high-rank regime when $n \approx 2r$ \cite{yalcin2022semidefinite}. These works all show the efficacy of over-parametrization, shedding light on a powerful way to find the global solution of complex non-convex problems. However, all of these techniques fail to handle real-world cases with $\delta_{2r} \rightarrow 1$ in the low-rank regime. To this end, a recent work \cite{ma2023over} drew on important concepts from the celebrated Lasserre’s Hierarchy \cite{lasserre2001global} and proposed a lifted framework based on tensor optimization that could convert spurious local minimizers of \eqref{eq:unlifted_main} into strict saddle points in the lifted space, for arbitrary RIP constants in the $r=1$ case. We state this lifted problem below:
\begin{equation}\label{eq:lifted_rank1}
	\min_{\mb{w} \in \RR^{n \circ l}} \quad \| \langle \mb{A}^{\otimes l}, \mb{w} \otimes \mb{w} \rangle - b^{\otimes l} \|^2_F \quad \text{(Lifted Problem, $r=1$)}
\end{equation}
where $\mb{w}$ is an $l$-way, $n$-dimensional tensor, and $\mb{A}^{\otimes l}$ and $b^{\otimes l}$ are tensors "lifted" from $\mathcal{A}$ and $b$ via tensor outer product. We defer the precise definition of tensors and their products to Section \ref{sec:def}. The main theorem of \cite{ma2023over} states that when $r=1$, for some appropriate $l$, the first-order points (FOP) of \eqref{eq:unlifted_main} will be converted to FOPs of \eqref{eq:lifted_rank1} via lifting, and that spurious second order points (SOP) of \eqref{eq:unlifted_main} will be converted into strict saddles, under some technical conditions, provided that $\mb{w}$ is symmetric, and rank-1. This rank-1 constraint on the decision variable $\mb{w}$ is non-trivial, since finding the dominant rank-1 component of symmetric tensors is itself a non-convex problem in general, and requires a number of assumptions for it to be provably correct \cite{kofidis2002best,wu2020symmetric}. This does not even account for the difficulties of maintaining the symmetric properties of tensors, which also has no natural guarantees. Therefore, although this lifted formulation may be promising in the pursuit of global minimum, there are still major questions to be answered. Most importantly, it is desirable to know \emph{whether the symmetric, rank-1 condition is necessary, and if so, how to achieve it without explicit constraints?}

The necessity of the condition in question can be better understood through insights from \cite{levin2022effect}. The authors argue that over-parametrizing non-convex optimization problems can reshape the optimization landscape, with the effect being largely independent of the cost function and primarily determined by the parametrization. This notion is consistent with \cite{ma2023over}, which contends that over-parametrizing vectors into tensors can transform spurious local solutions into strict saddles. However, \cite{levin2022effect} specifically examines the parametrization from vectors/matrices to tensors, concluding that stationary points are not generally preserved under tensor parametrization, contradicting \cite{ma2023over}. This implies that the symmetric, rank-1 constraints required in \eqref{eq:lifted_rank1} are crucial for the conversion of spurious points. 



It is essential to devise a method to encourage tensors to be near rank-1, with implicit regularization as a potential solution. There has been a recent surge in examining the implicit regularization effects in first-order optimization methods, such as gradient descent (GD) and stochastic gradient descent (SGD) \cite{li2022survey}, which has been well-studied in matrix sensing settings \cite{stoger2021small,jin2023understanding,ma2022global,li2018algorithmic}. This intriguing observation has prompted us to explore the possible presence of similar implicit regularization in tensor spaces. Our findings indicate that when applying GD to the tensor optimization problem \eqref{eq:lifted_rank1}, an implicit bias can be detected with sufficiently small initialization points. This finding does not directly extend from its matrix counterparts due to the intricate structures of tensors, resulting in a scarcity of useful identities and well-defined concepts for even fundamental properties such as eigenvalues. Furthermore, we show that when initialized at a symmetric tensor, the entire GD trajectory remains symmetric, completing the requirements.

In this paper, we demonstrate that over-parametrization alone does not inherently simplify non-convex problems. However, employing a suitable optimization algorithm offers a remarkably straightforward solution, as this specific algorithm implicitly constrains our search to occur within a compact representation of the over-parametrized space without necessitating manual embeddings or transformations. This insight further encourages the investigation of a (parametrization, algorithm) pair for solving non-convex problems, thereby enhancing our understanding of achieving global optimality in non-convex problems.

\subsection{Related Works}\label{sec:related}
\textbf{Over-parametrization in matrix sensing}. Except for the lifting formulation \eqref{eq:lifted_rank1}, there are two mainstream approaches to over-parametrization in matrix sensing. The first one is done via searching over $Y \in \RR^{n \times r_{\text{search}}}$ instead of $X \in \RR^{n \times r}$, and using some distance metric to minimize the distance between $\mathcal{A}(YY^\top)$ and $b$. Using an $l_2$ norm, \cite{zhang2021sharp,zhang2022improved} established that if $r_{\text{search}} > r[(1+\delta_n)/(1-\delta_n)-1]^2/4$, with $r \leq r_{\text{search}} < n$, then every second-order point $\hat Y \in \RR^{n \times r_{\text{search}}}$ satisfies that $\hat Y \hat Y^\top = M^*$. \cite{ma2023geometric} showed that even in the over-parametrized regime, noise can only finitely influence the optimization landscape. \cite{ma2022global} offered similar results for an $l_1$ loss under good enough RIP constant. Another popular approach to over-paramerization is to use a convex SDP formulation, which is a convex relaxation of \eqref{eq:SDP main} \cite{recht2010guaranteed}. It has been known for years that as long as $\delta_{2r} < 1/2$, then the global optimality of the SDP formulation correspond to the ground truth $M^*$ \cite{cai2013sharp}. Recently \cite{yalcin2022semidefinite} updated this bound to $2r/(n+(n-2r)(2l-5))$, which can approach $1$ if $n \approx 2r$.

\textbf{Algorithm regularization in over-parametrized matrix sensing}. \cite{li2018algorithmic,zhuo2021computational} prove that the convergence to global solution via GD is agnostic of $r_{\text{search}}$, in that it only depends on initialization scale, step-size, and RIP property. \cite{ma2022global} demonstrates the same effect for an $l_1$ norm, and further showed that a small initialization nullifies the effect of over-parametrization. Besides these works, \cite{stoger2021small} refined this analysis, showing that via a sufficiently small initialization, the GD trajectory will make the solution implicitly penalize towards rank-$r$ matrices after a small number of steps. \cite{jin2023understanding} took it even further by showing that the GD trajectory will first make the matrix rank-$1$, rank-$2$, all the way to rank-$r$, in a sequential way, thereby resembling incremental learning. 

\textbf{Implicit bias in tensor learning}. The line of work \cite{razin2021implicit,razin2022implicit,ge2021understanding} demonstrates that for a class of tensor factorization problems, as long as the initialization scale is small, the learned tensor via GD will be approximately rank-1 after an appropriate number of steps. Our paper differs from this line of work in three meaningful ways: 1) The problem considered in those works are optimization problems over vectors, not tensors, and therefore the goal is to learn the structure of a known tensor, rather than learning a tensor itself; 2) Our proof relies directly on tensor algebra instead of adopting a dynamical systems perspective, providing deeper insights into tensor training dynamics while dispensing with the impractical assumption of an infinitesimal step-size.

\subsection{Main Contributions}
\begin{enumerate}
\item We demonstrate that, beyond vector and matrix learning problems, optimization of differentiable objectives, such as the $l_2$ norm, through Gradient Descent (GD) can encourage a more compact representation for tensors as decision variables. This results in tensors being approximately rank-1 after a number of gradient steps. To achieve this, we employ an innovative proof technique grounded in tensor algebra and introduce a novel tensor eigenvalue concept, the variational eigenvalue (v-eigenvalue), which may hold independent significance due to its ease of use in optimization contexts.
\item We show that if a tensor is a first-order point of the lifted objective \eqref{eq:lifted_rank1} and is approximately rank-1, then its rank-1 component can be mapped to an FOP of \eqref{eq:unlifted_main}, implying that all FOPs of \eqref{eq:lifted_rank1} lie in a small sphere around the lifted FOPs of \eqref{eq:unlifted_main}. Furthermore, these FOPs possess an escape direction when reasonably distant from the ground truth solution, irrespective of the Restricted Isometry Property (RIP) constants.
\item We present a novel lifted framework that optimizes over symmetric tensors to accommodate the over-parametrization of matrix sensing problems with arbitrary $r$. This approach is necessary because directly extending the work of \cite{ma2023over} from $r=1$ to higher values  may lead to non-cubical and, consequently, non-symmetric tensors.
\end{enumerate}

\section{Preliminaries} \label{sec:def}
Please refer to Appendix \ref{sec:app_notations} and \ref{sec:app_fopsop} for the notations and definitions of first-order and second-order conditions. Here, we introduce two concepts that are critical in understanding our main results.

\begin{definition}[Tensors and Products]
	We define an $l$-way tensor as:
	\[
		\mathbf{a} = \{a_{i_1i_2\dots i_l} | 1 \leq i_k \leq n_k, 1 \leq k \leq l \} \in \RR^{n_1 \times \dots \times n_l}
	\]
	Moreover, if $n_1 = \dots = n_l$, then we call this tensor an $l$-order (or $l$-way), $n$-dimensional tensor. $\RR^{n \circ l}$ is an abbreviated notion for $n \circ l \coloneqq n \times \dots \times n$. In this work, tensors are denoted with bold letters unless specified otherwise. The tensor outer product, denoted as $\otimes$, of 2 tensors $\mathbf{a}$ and $\mathbf{b}$, respectively of orders $l$ and $p$, is a tensor of order $l+p$, namely $\mb{c} = \mb{a} \otimes \mb{b}$ with $c_{i_1\dots i_l j_1 \dots j_p} = a_{i_1\dots i_l} b_{j_1 \dots j_p}$. 
	We also use the shorthand $\mb{a}^{\otimes l}$ for repeated outer product of $l$ times for arbitrary tensor/matrix/vector $\mb{a}$. $\langle \mb{a}, \mb{b} \rangle_{i_1,\dots,i_d}$ denotes tensor inner product along dimensions $i_1,\dots,i_d$ (with respect to the first tensor), in which we simply sum over the specified dimensions after the outer product $\mb{a} \otimes \mb{b}$ is calculated. This means that the inner product is of $l+p-2d$ orders. Please refer to Appendix \ref{sec:app_def} for a more in-depth review on tensors, especially on its symmetry and rank.
\end{definition}

\begin{definition}[Restricted Strong Smoothness (RSS) and Restricted Strong Convexity (RSC)]
	The linear operator $\mathcal{A}: \RR^{n \times n} \mapsto \RR^m$ satisfies the $(L_s,r)$-RSS property and the $(\alpha_s,r)$-RSC property if
	\begin{align*}
		&f(M) - f(N) \leq \langle M - N, \nabla f(N) \rangle + \frac{L_s}{2} \|M-N\|^2_F \\
		&f(M) - f(N) \geq \langle M - N, \nabla f(N) \rangle + \frac{\alpha_s}{2} \|M-N\|^2_F
	\end{align*}
	are satisfied, respectively for all $M,N \in \mathbb{R}^n$ with $\rk(M), \rk(N) \leq r$. Note that RSS and RSC provide a more expressible way to represent the RIP property, with $\delta_{r} = (L_s-\alpha_s)/(L_s+\alpha_s)$.
\end{definition}

\section{The Lifted Formulation for General $r$}
A natural extension of \eqref{eq:lifted_rank1} to general $r$ requires that instead of optimizing over $X \in \RR^{n \times r}$, we optimize over $\RR^{[n \times r] \circ l}$ tensors, and simply making tensor outer products between $\mb{w}$ to be inner products. However, such a tensor space is non-cubical, and subsequently not symmetric. This is the higher-dimensional analogy of non-square matrices, which lacks a number of desirable properties, as per the matrix scenario. In particular, it is necessary for our approach to optimize over a cubical, symmetric tensor space since in the next section we prove that there exists an implicit bias of the gradient descent algorithm under that setting.

In order to do so, we simply vectorize $X \in \RR^{n \times r}$ into $\vecc(X) \in \RR^{nr}$, and optimize over the tensor space of $\RR^{nr \circ l}$, which again is a cubical space. In order to convert a tensor $\mb{w} \in \RR^{nr \circ l}$ back to $\RR^{[n \times r] \circ l}$ to use a meaningful objective, we introduce a new 3-way permutation tensor $\mb{P} \in \RR^{n \times r \times nr}$ that "unstacks" vectorized matrices. Specifically,
\[
	\langle \mb{P}, \vecc(X) \rangle_3 = X \quad \forall X \in \RR^{n \times r}, n, r \in \mathbb{Z^+}
\]
Such $\mb{P}$ can be easily constructed via filling appropriate scalar "1"s in the tensor. Via Lemma~\ref{lem:kron_iden}, we also know that 
\begin{equation}\label{eq:highr_vectorization}
	 \langle \mb{P}^{\otimes l}, \vecc(X)^{\otimes l} \rangle_{3*[l]}= (\langle \mb{P}, \vecc(X) \rangle_3)^{\otimes l}= X^{\otimes l}
\end{equation}
where $[l]$ denotes the integer set $[1,\dots,l]$, and $c*[l]$ denotes $[c,2c,\dots,c*l]$ for some $c \in \mathbb{Z}^+$. For notational convenience, we abbreviate $\langle \mb{P}^{\otimes l}, \mb{w} \rangle_{3*[l]}$ as $\mb{P}(\mb{w})$ for any arbitrary $z$-dimensional tensor $\mb{w}$ where $z$ can be broken down into the product of two positive integers. Thus, using \eqref{eq:highr_vectorization}, we can extend \eqref{eq:lifted_rank1} to a problem of general $r$, yet still defined over a cubical tensor space:
\begin{equation}\label{eq:lifted_main_highr}
	\min_{\mb{w} \in \RR^{nr \circ l}} \quad \| \langle \mb{A}^{\otimes l}, \langle \mb{P}(\mb{w}), \mb{P}(\mb{w}) \rangle_{2*[l]} \rangle - b^{\otimes l} \|^2_F \quad \text{(Lifted formulation, general $r$)}
\end{equation}
Let us define a 3-way tensor $\mathbf{A} \in \RR^{m \times n \times n}$ so that $\mathbf{A}_{kij} = (A_k)_{ij} \ \forall k \in [m], (i,j) \in [n] \times [n]$. Define $f^l(\cdot): \RR^{n \circ 2l} \mapsto \RR$ and $h^l(\cdot): \RR^{[n \times r] \circ l} \mapsto \RR$ as $f^l(\mb{M}) \coloneqq \| \langle \mb{A}^{\otimes l}, \mb{M} \rangle - b^{\otimes l} \|^2_F$ and $h^l(\mb{w}) = f^l(\langle \mb{w}, \mb{w} \rangle_{2*[l]})$, with $\nabla f^l(\cdot) = \nabla_\mb{M} f^l(\cdot)$ and $\nabla h^l(\cdot) = \nabla_\mb{w} h^l(\cdot)$.

We prove that \eqref{eq:lifted_main_highr} has all the good properties detailed in \cite{ma2023over} for \eqref{eq:lifted_rank1}. In particular, we prove that the symmetric, rank-1 FOPs of \eqref{eq:lifted_main_highr} have a one-to-one correspondence with those of \eqref{eq:unlifted_main}, and that those FOPs that are reasonably separated from $M^*$ or have a small $r^{th}$ singular value can be converted to strict saddle points via some level of lifting. For the detailed theorems and proofs, please refer to Appendix \ref{sec:app_highr}.

\section{Implicit Bias of Gradient Descent in Tensor Space}

In this section, we study why and how applying gradient descent to \eqref{eq:lifted_main_highr} will result in an implicit bias towards to rank-1 tensors. Prior to presenting the proofs, we shall elucidate the primary intuition behind how GD contributes to the implicit regularization of \eqref{eq:unlifted_main}. This will aid in comprehending the impact of implicit bias on \eqref{eq:lifted_main_highr}, as they share several crucial observations, albeit encountering greater technical hurdles. Consider the first gradient step of \eqref{eq:unlifted_main}, initialized at a random point $X_0 \in \RR^{n \times r_{\text{search}}} = \epsilon X$ with $\|X\|^2_F = 1$ and $r_{\text{search}} \geq r$:
 \begin{align*}
 	X_1 &= X_0 - \eta \nabla h(X_0) = \left( I + \eta \left[ \mathcal{A}^* \mathcal{A}(M^*) \right] \right) X_0 - \left[ \mathcal{A}^* \mathcal{A}(X_0 X_0^\top) \right] X_0 \\
 	&= \left( I + \eta \left[ \mathcal{A}^* \mathcal{A}(M^*) \right] \right) X_0 - \epsilon^2 \left[ \mathcal{A}^* \mathcal{A}(X X^\top) \right] X_0 \\
 	& =  \left( I + \eta \left[ \mathcal{A}^* \mathcal{A}(M^*) \right] \right) X_0 + \mathcal{O}(\epsilon^3)
 \end{align*}
 where $\eta$ is the step-size. Therefore, if $\epsilon$ is chosen to be small enough, we have that
 \[
 	X_t \approx (I + \eta \mathcal{A}^* \mathcal{A}(M^*))^t X_0 \quad \text{as} \ \epsilon \rightarrow 0
 \]
 Again, according to the symmetric assumptions on $\mathcal{A}$, we can apply spectral theorem on $\mathcal{A}^* \mathcal{A}(M^*) = \sum_{i=1}^n \lambda_i v_i v_i^\top$ for which the eigenvectors are orthogonal to each other. It follows that $X_t \approx \left( \sum_{i=1}^n (1+\eta\lambda_i)^t v_i v_i^\top \right) X_0$.
 
 In many papers surveyed above on making an argument of implicit bias, it is assumed that there is very strong geometric uniformity, or under the context of this paper, it means that $L_s/\alpha_s \approx 1$. Under this assumption, we have $f(M) \approx f(N) + \langle M-N, \nabla f(M) \rangle +  \|M-N\|^2_F/2$, leading to the fact that $\nabla^2 f(M) = \mathcal{A}^* \mathcal{A} \approx I$. This immediately gives us $\mathcal{A}^* \mathcal{A}(M^*) \approx M^*$ so that $\lambda_{r+1}, \dots, \lambda_n \approx 0$ as $M^*$ is by assumption a rank-$r$ matrix. This further implies that $X_t \approx \left( \sum_{i=1}^r (1+\eta\lambda_i)^t v_i v_i^\top \right) X_0$, which will become a rank-r matrix, achieving the effect of implicit regularization, as $X$ is now over-parametrized by having $r_{\text{search}} \geq r$. 
 
 However, when tackling the implicit regularization problem in tensor space, one key deviation from the aforementioned procedure is that $L_s/\alpha_s$ will be relatively large, as otherwise there will be no spurious solutions, even in the noisy case \cite{zhang2019sharp,ma2023geometric}, which is also the motivation for using a lifted framework in the first place. Therefore, instead of saying that $\mathcal{A}^* \mathcal{A}(M^*) \approx M^*$, we aim to show that the gap between the eigenvalues of a comparable tensor term will enlarge as we increase $l$, making the tensor predominantly rank-1. This observation demonstrates the power of the lifting technique, while at the same time eliminates the critical dependence on a small $L_s/\alpha_s$ factor that is in practice often unachievable due to requiring sample numbers $m$ in the asymptotic regime \cite{candes2011tight}.
 
Therefore, in order to establish an implicit regularization result for \eqref{eq:lifted_main_highr}, there are four major steps that need to be taken:
\begin{enumerate}
	\item Proving that a point on the GD trajectory $\mb{w}_t$ admits a certain breakdown in the form $\mb{w}_t = \langle \mb{Z}_t, \mb{w}_0 \rangle - \mb{E}_t$ for some $\mb{Z}_t$ and $\mb{E}_t$.
	\item Proving that the spectral norm (equivalence of largest singular value) of $\mb{E}_t$ is small (scales with initialization scale $\epsilon$)
	\item Proving that $\langle \mb{Z}_t, \mb{w}_0 \rangle$ has a large separation between its largest and second largest eigenvalues using a tensor version of Weyl's inequality.
	\item Showing that, with the above holding true, $\mb{w}_t$ is predominantly rank-1 after some step $t_*$.
\end{enumerate}
Lemmas \ref{lem:traj_breakdown}, \ref{lem:E_t_spectral}, \ref{lem:w_eig_ratio}, and Theorem \ref{thm:implicit_bias_gd} correspond to the above four steps, respectively. The reader is referred to the lemmas and theorem for more details.  

\subsection{A Primer on Tensor Algebra and Maintaining Symmetric Property}
We start with the spectral norm of tensors, which resembles the operator norm of matrices \cite{qi2019tensor}.
\begin{definition}
	Given a cubic tensor $\mb{w} \in \RR^{n \circ l}$, its spectral norm $\|\cdot\|_S$ is defined respectively as:
	\begin{align*}
		\|\mb{w}\|_S &= \sup \left\{| \langle \mb{w}, u^{\otimes l} \rangle |\ \|u\|_2 = 1, u \in \RR^{n} \right\}
	\end{align*}
\end{definition}
There are many definitions for tensor eigenvalues \cite{qi2012spectral}, and in this paper we introduce a novel variational characterization of eigenvalues that resembles the Courant-Fisher minimax definition for eigenvalues of matrices, called the v-Eigenvalue. We denote the $i^{th}$ v-Eigenvalue of $\mb{w}$ as $\lambda_i^v(\mb{w})$. Note this is a new definition that is first introduced in this paper and might be of independent interest outside of the current scope.
\begin{definition}[Variational Eigenvalue of Tensors]\label{def:v_eigenvalues}
	For a given tensor $\mb{w} \in \RR^{n \circ l}$, we define its $k^{th}$ variational eigenvalue (v-Eigenvalue) $\lambda_k^v(\mb{w})$ as
	\[
		\lambda_k^v(\mb{w}) \coloneqq \max_{\substack{S \\ \dim(S)=k}} \min_{\mb{u} \in S} \frac{|\langle \mb{w}, \mb{u} \rangle |}{\|\mb{u}\|^2_F}, \quad k \in [n]
	\]
\end{definition}
	where $S$ is a subspace of $\RR^{n \circ l}$ that is spanned by a set of orthogonal, symmetric, rank-1 tensors. Its dimension denotes the number of orthogonal tensors that span this space. It is apparent from the definition that $\|\mb{w}\|_S = \lambda_1^v(\mb{w})$.
	
Next, since most of our analysis relies on the symmetry of the underlying tensor, it is desirable to show that every tensor along the optimization trajectory of GD on \eqref{eq:lifted_main_highr} remains symmetric if started from a symmetric tensor. Please find its proof in Appendix \ref{sec:app_implicit_proofs}.
\begin{lemma}\label{lem:gd_sym}
	If the GD trajectory of \eqref{eq:lifted_main_highr} $\{ \mb{w}_{t}\}_{t=0}^{\infty}$ is initialized at a symmetric rank-1 tensor $\mb{w}_{0}$, then $\{ \mb{w}_{t}\}_{t=0}^{\infty}$ will all be symmetric.
\end{lemma}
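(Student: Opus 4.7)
The plan is a direct induction on $t$ that exploits the permutation symmetry of the lifted objective $h^l$. A tensor $\mb{w} \in \RR^{nr \circ l}$ being symmetric means it is invariant under the natural action of the symmetric group $S_l$ that permutes its $l$ slots, and this subspace is closed under linear combinations. Since $\mb{w}_{t+1} = \mb{w}_t - \eta \nabla h^l(\mb{w}_t)$ and the initial point $\mb{w}_0 = \mb{u}^{\otimes l}$ is rank-1 and hence automatically symmetric, the claim reduces to showing that whenever $\mb{w}_t$ is symmetric, so is $\nabla h^l(\mb{w}_t)$.

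The structural observation driving the proof is that $h^l$ is itself invariant under the $S_l$-action on $\mb{w}$. Indeed, $\mb{A}^{\otimes l}$ and $b^{\otimes l}$ are fully symmetric $l$-fold outer products; the unstacking operator is applied slot-by-slot via $\mb{P}^{\otimes l}$, so a permutation $\sigma$ of the $l$ slots of $\mb{w}$ induces exactly the same permutation of the $l$ slot-pairs of $\mb{P}(\mb{w}) \in \RR^{[n \times r] \circ l}$; the inner product $\langle \mb{P}(\mb{w}), \mb{P}(\mb{w}) \rangle_{2*[l]}$ contracts the $r$-axes pairwise while leaving the $l$-fold block structure intact on the surviving $n$-axes; finally, the outer contraction with $\mb{A}^{\otimes l}$ and the subtraction of $b^{\otimes l}$ commute with the permutation of those blocks by virtue of the symmetry of these two tensors. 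Chaining these facts gives $h^l(\sigma \cdot \mb{w}) = h^l(\mb{w})$ for every $\sigma \in S_l$.

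From this invariance, the chain rule yields $\sigma^{\top} \nabla h^l(\sigma \cdot \mb{w}) = \nabla h^l(\mb{w})$, and since $\sigma$ is an orthogonal reshuffling we obtain $\nabla h^l(\sigma \cdot \mb{w}) = \sigma \cdot \nabla h^l(\mb{w})$. Evaluating at a symmetric iterate $\mb{w}_t$, where $\sigma \cdot \mb{w}_t = \mb{w}_t$, this yields $\nabla h^l(\mb{w}_t) = \sigma \cdot \nabla h^l(\mb{w}_t)$ for all $\sigma \in S_l$, so the gradient is itself symmetric. The inductive step then closes because $\mb{w}_{t+1}$ is a linear combination of two symmetric tensors.

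The main obstacle is notational rather than conceptual: one must verify explicitly that the $S_l$-action commutes with the unstacking by $\mb{P}$, since each slot of $\mb{w}$ of dimension $nr$ is mapped to a pair of axes of dimensions $(n,r)$, and one must confirm that permuting the $l$ outer slots of $\mb{w}$ permutes the $l$ pairs of $\mb{P}(\mb{w})$ as blocks without scrambling individual $n$ and $r$ axes. This compatibility follows from the defining identity $\langle \mb{P}, \vecc(X) \rangle_3 = X$ applied slot-wise through $\mb{P}^{\otimes l}$, after which the invariance principle concludes matters in one line.
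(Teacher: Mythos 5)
Your proof is correct but takes a genuinely different route from the paper's.

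The paper's proof is computational: it writes the symmetric iterate $\mb{w}_{t-1}$ in its CP decomposition $\sum_j \lambda_j (x_j^{t-1})^{\otimes l}$, substitutes this into the explicit formula for $\nabla h^l$, and observes via Lemma~\ref{lem:kron_iden} that every summand in the resulting expansion is again of the form $(\text{vector})^{\otimes l}$, hence symmetric; the gradient step then adds symmetric tensors. Your proof is structural: you establish that $h^l$ is invariant under the $S_l$-action permuting the $l$ slots of $\mb{w}$ --- by tracing that invariance through $\mb{P}^{\otimes l}$, the pairwise $r$-axis contraction in $\mathcal{M}$, the contraction against the fully symmetric $\mb{A}^{\otimes l}$, and the subtraction of the fully symmetric $b^{\otimes l}$ --- and then invoke equivariance of the gradient under an orthogonal symmetry, $\nabla h^l(\sigma \cdot \mb{w}) = \sigma \cdot \nabla h^l(\mb{w})$, to conclude that the gradient at a fixed point of the $S_l$-action is itself fixed. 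Your route is more abstract and more robust: it never needs an explicit gradient formula or a CP decomposition of the iterate, and it would apply verbatim to any $S_l$-invariant objective and to any $S_l$-equivariant algorithm (e.g.\ momentum or preconditioned GD with a symmetric preconditioner), whereas the paper's computation is tied to the specific algebraic form of $\nabla h^l$. The paper's approach buys concreteness --- it produces the explicit rank-1 summands, which are reused in the subsequent trajectory analysis --- but for the lemma itself your invariance argument is the cleaner proof.
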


\subsection{Main Ideas and Proof Sketch}
In this subsection, we highlight the main ideas behind implicit bias in GD. Lemma \ref{lem:traj_breakdown} and \ref{lem:E_t_spectral} details the first and second step, and are deferred to Appendix \ref{sec:app_implicit_proofs}. The proofs to the results of this section can also be found in that appendix. The lemmas alongside with their proofs are highly technical and not particularly enlightening, therefore omitted here for simplicity. However, the most important takeaway is that for the $t^{th}$ iterate along the GD trajectory of \eqref{eq:lifted_main_highr}, we have the decomposition
\[
	\mb{w}_{t+1} = \langle \mb{Z}_t, \mb{w}_0 \rangle - \mb{E}_t \coloneqq \mb{\tilde w}_t - \mb{E}_t
\]
for some $ \mb{Z}_t$ and $\mb{E}_t$ such that $\|\mb{E}_t\|_S = \mathcal{O}(\epsilon^3)$. This essentially means that by scaling the initialization $\mb{w}_0$ to be small in scale, the error term $\mb{E}_t$ can be ignored from a spectral standpoint, and scales with $\epsilon$ at a cubic rate. This will soon be proven to be useful next.

\begin{lemma}\label{lem:w_eig_ratio}
	Given $\mb{w}_t$ along the GD trajectory of \eqref{eq:lifted_main_highr}, its first two v-eigenvalues, as defined in definition~\ref{def:v_eigenvalues}, satisfy the relation
	\begin{equation}\label{eq:w_eig_separation}
		\frac{\lambda^v_{2}(\mb{ w}_t)}{\lambda^v_1(\mb{ w}_t)} \leq \frac{  \|x_0\|_2^l (1+\eta \sigma_2^l(U))^t + \|\mb{E}_t\|_S/\epsilon}{|v_1^\top x_0|^l(1+\eta \sigma_1^l(U))^t - \|\mb{E}_t\|_S/\epsilon} =\frac{ \|x_0\|_2^l (1+\eta \sigma_2^l(U))^t +  \mathcal{O}(\epsilon^2)}{|v_1^\top x_0|^l(1+\eta \sigma_1^l(U))^t - \mathcal{O}(\epsilon^2)} 
	\end{equation}
	where $\sigma_{1}(U)$ and $\sigma_{2}(U)$ denote the first and second singular values of $U = \langle \mb{A}_r^* \mb{A},M^*\rangle \in \RR^{nr \times nr}$, and $v_1,v_2$ are the associated singular vectors.
\end{lemma}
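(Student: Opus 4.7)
\emph{Proof proposal.} The plan is to bound the numerator and denominator of \eqref{eq:w_eig_separation} separately using the trajectory decomposition $\mb{w}_t = \mb{\tilde w}_t - \mb{E}_t$ from Lemma~\ref{lem:traj_breakdown} together with the estimate $\|\mb{E}_t\|_S = \mathcal{O}(\epsilon^3)$ from Lemma~\ref{lem:E_t_spectral}. The first step is to establish a Weyl-type perturbation bound for the v-eigenvalue: since every admissible test tensor in Definition~\ref{def:v_eigenvalues} is unit rank-1 symmetric, one has $|\langle \mb{E}_t,\mb{u}\rangle|\leq \|\mb{E}_t\|_S$, and the standard manipulation of maxima and minima yields $|\lambda_k^v(\mb{a}+\mb{b})-\lambda_k^v(\mb{a})|\leq \|\mb{b}\|_S$ for any symmetric $\mb{a},\mb{b}$. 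Applied with $\mb{a}=\mb{\tilde w}_t$ and $\mb{b}=-\mb{E}_t$, this reduces the problem to bounding $\lambda_1^v(\mb{\tilde w}_t)$ from below and $\lambda_2^v(\mb{\tilde w}_t)$ from above, up to an additive $\|\mb{E}_t\|_S$ slack on each side.

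For the denominator I would test $\lambda_1^v(\mb{\tilde w}_t)$ against the single unit rank-1 symmetric tensor $v_1^{\otimes l}$. Unwinding the recursion that defines $\mb{Z}_t$ from the GD step on \eqref{eq:lifted_main_highr}, and using Lemma~\ref{lem:gd_sym} together with the symmetric eigendecomposition of $U$, the $v_1^{\otimes l}$-component of $\mb{\tilde w}_t$ evaluates to exactly $\epsilon |v_1^\top x_0|^l (1+\eta\sigma_1^l(U))^t$: each GD step scales the coefficient of $v_i^{\otimes l}$ by $(1+\eta\sigma_i^l(U))$, and the initialization $\mb{w}_0=\epsilon x_0^{\otimes l}$ already carries the factor $(v_1^\top x_0)^l$ along $v_1^{\otimes l}$. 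For the numerator, I would establish $\lambda_2^v(\mb{\tilde w}_t)\leq \epsilon \|x_0\|_2^l(1+\eta\sigma_2^l(U))^t$ by showing that any admissible 2D tensor-subspace in Definition~\ref{def:v_eigenvalues} is the span of rank-1 symmetric tensors $u_1^{\otimes l},u_2^{\otimes l}$ built from orthonormal $u_1,u_2\in\RR^{nr}$, and that the minimum of the two corresponding inner products is then dominated by a direction effectively orthogonal to $v_1$; expanding that direction in the eigenbasis of $U$ and applying Cauchy-Schwarz across the spectral modes replaces the leading growth rate $(1+\eta\sigma_1^l(U))^t$ by $(1+\eta\sigma_2^l(U))^t$ and converts $|v_1^\top x_0|^l$ into $\|x_0\|_2^l$. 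Combining these two estimates, dividing numerator and denominator by the common factor $\epsilon$, and invoking $\|\mb{E}_t\|_S=\mathcal{O}(\epsilon^3)$ for the $\mathcal{O}(\epsilon^2)$ tails delivers the bound in \eqref{eq:w_eig_separation}.

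The main technical obstacle I anticipate is the upper bound on $\lambda_2^v(\mb{\tilde w}_t)$. Unlike the matrix case, the v-eigenvalue is defined only through a maximin over subspaces spanned by orthogonal rank-1 symmetric tensors, and a linear combination of two such tensors is generically not rank-1, so the usual ``intersection of a $k$-dimensional and an $(n-k+1)$-dimensional subspace'' trick from Courant-Fischer must be executed at the level of underlying vectors in $\RR^{nr}$ rather than inside the tensor subspace itself. Bridging this gap requires showing that for any orthonormal pair $(u_1,u_2)$ at least one of the scalars $|\langle \mb{\tilde w}_t, u_i^{\otimes l}\rangle|$ inherits the slower growth $(1+\eta\sigma_2^l(U))^t$, which is exactly where the lifting order $l$ earns its keep: raising $\sigma_i(U)$ to the $l$-th power sharpens the spectral gap between $\sigma_1$ and $\sigma_2$ and forces the dominant growth direction to be aligned with $v_1^{\otimes l}$ alone, making every orthogonal contribution controllable by the stated right-hand side.
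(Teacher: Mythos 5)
Your proposal follows the paper's proof essentially step-for-step: decompose $\mb{w}_t = \mb{\tilde w}_t - \mb{E}_t$ via Lemma~\ref{lem:traj_breakdown}, invoke the tensor Weyl inequality to separate the perturbation (the paper has already established this as Lemma~\ref{lem:tensor_weyl}, so you should cite it rather than re-derive it — the re-derivation you sketch quietly assumes the inner minimizer is rank-1, which is exactly what Proposition~\ref{prop:eigen_equi} is needed to justify), lower-bound $\lambda_1^v(\mb{\tilde w}_t)$ by testing against $v_1^{\otimes l}$, and upper-bound $\lambda_2^v(\mb{\tilde w}_t)$ via the variational definition. Your concern about the $\lambda_2^v$ step is well placed: the paper resolves it by restricting the inner minimum to rank-1 symmetric test tensors $v^{\otimes l}$, so that $\langle\mb{\tilde w}_t, v^{\otimes l}\rangle = \epsilon\sum_k\binom{t}{k}\eta^k(v^\top U^k x_0)^l$ and the maximin collapses to a Courant--Fischer-style computation over two-dimensional subspaces of $\RR^{nr}$ (bounding $|v^\top U^k (x_0/\|x_0\|_2)|$ against the quadratic form and evaluating at $v_2$), which is precisely the reduction you propose, executed in a compressed form; fleshing that step out is the remaining work in either route.
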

Lemma~\ref{lem:w_eig_ratio} showcases that when $\epsilon$ is small, the ratio between the largest and second largest v-eigenvalues of $\mb{w}$ is dominated by $(\|x_0\|_2^l (1+\eta \sigma_2^l(U))^t)/(|v_1^\top x_0|^l(1+\eta \sigma_1^l(U))^t)$.

Now, if either $\|x_0\|_2^l$ is large or $|v_1^\top x_0|^l$ approaches 0 in value, then the ratio may be relatively large, contradicting our claim. However, this issue can be easily addressed by letting $x_0 = v_1 +g \in \RR^{nr}$, where $g$ is a vector with each entry being i.i.d sampled from the Gaussian distribution $\mathcal{N}(0,\rho)$. Note that since $U = \langle \mathbf{A}_r, b \rangle_3$, we can calculate $U$ and $v_1$ directly. Lemma~\ref{lem:gauss_init_value} in Appendix \ref{sec:app_implicit_proofs} shows that with this initialization, $|v_1^\top x_0|^l = \mathcal{O}(1)$ and $\|x_0\|_2^l = \mathcal{O}(1)$ with high probability if we select $\rho = \mathcal{O}(1/nr)$. Therefore, the $t^{th}$ iterate along the GD trajectory of \eqref{eq:lifted_main_highr} satisfies
	\begin{equation}\label{eq:approx_eig_ratio}
		\frac{\lambda^v_{2}(\mb{w}_t)}{\lambda^v_1(\mb{w}_t)} \asymp \frac{ (1+\eta \sigma_2^l(U))^t}{(1+\eta \sigma_1^l(U))^t}
	\end{equation}
	with hight probability if $\rho$ is small. This implies that "the level of parametrization helps with separation of eigenvalues", since increasing $l$ will decrease ratio $\lambda^v_{2}(\mb{w}_t)/\lambda^v_1(\mb{w}_t)$. Furthermore, regardless of the value of $\sigma_1(U)$, a larger $t$ will make this ratio exponentially smaller, proving the efficacy of algorithmic regularization of GD in tensor space.
	
	By combining the above facts, we arrive at a major result showing how a small initialization could make the points along the GD trajectory penalize towards rank-1 as $t$ increases
	\begin{theorem}\label{thm:implicit_bias_gd}
		Given the optimization problem \eqref{eq:lifted_main_highr} and its GD trajectory over some finite horizon $T$, i.e.,  $\{\mb{w}_t\}_{t=0}^{T}$ with $\mb{w}_{t+1} = \mb{w}_{t} - \eta \nabla h^l(\mb{w}_{t})$, where $\eta$ is the stepsize, then there exist $t(\kappa,l) \geq 1$ and $\kappa < 1$ such that
		\begin{equation}\label{eq:w_eig_kappa}
			\frac{\lambda^v_{2}(\mb{w}_t)}{\lambda^v_1(\mb{w}_t)} \leq \kappa, \qquad \forall t \in [t(\kappa,l), t_T]
		\end{equation}
		if $\mb{w}_0$ is initialized as $\mb{w}_0 = \epsilon x_0^{\otimes l}$ with a sufficiently small $\epsilon$, where $t(\kappa, l)$ is expressed as
		\begin{equation}\label{eq:t_kappa_l}
			t(\kappa, l) = \left \lceil \ln\left( \frac{\|x_0\|^l_2}{\kappa |v_1^\top x_0|^l}\right) \ln\left( \frac{ 1+\eta \sigma_1^l(U)}{1+\eta \sigma_2^l(U)} \right)^{-1} \right \rceil
		\end{equation}
\end{theorem}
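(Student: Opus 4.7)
The plan is to reduce the claim to an elementary scalar inequality in $t$ by invoking Lemma \ref{lem:w_eig_ratio} directly, and then solving for when the leading term drops below $\kappa$. Starting from
\[
\frac{\lambda^v_2(\mb{w}_t)}{\lambda^v_1(\mb{w}_t)} \leq \frac{\|x_0\|_2^l(1+\eta\sigma_2^l(U))^t + \|\mb{E}_t\|_S/\epsilon}{|v_1^\top x_0|^l(1+\eta\sigma_1^l(U))^t - \|\mb{E}_t\|_S/\epsilon},
\]
I would first use Lemma \ref{lem:E_t_spectral} to write $\|\mb{E}_t\|_S/\epsilon = \mathcal{O}(\epsilon^2)$, uniformly over $t \in [0,T]$, after choosing $\epsilon$ small enough (depending on $T,\eta,l,\sigma_1(U)$) so that the correction is negligible compared to the dominant geometric growth $(1+\eta\sigma_1^l(U))^t$. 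With the error terms absorbed, it suffices to find the smallest integer $t$ satisfying
\[
\frac{\|x_0\|_2^l(1+\eta\sigma_2^l(U))^t}{|v_1^\top x_0|^l(1+\eta\sigma_1^l(U))^t} \leq \kappa,
\]
and taking logarithms of both sides gives the threshold
\[
t \cdot \ln\!\left(\frac{1+\eta\sigma_1^l(U)}{1+\eta\sigma_2^l(U)}\right) \geq \ln\!\left(\frac{\|x_0\|_2^l}{\kappa|v_1^\top x_0|^l}\right),
\]
which, upon taking the ceiling, matches exactly the expression \eqref{eq:t_kappa_l} for $t(\kappa, l)$.

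Next, to get the bound over the whole interval $[t(\kappa,l), t_T]$ rather than at a single point, I would appeal to monotonicity: because $\sigma_1(U) > \sigma_2(U)$, the base ratio $(1+\eta\sigma_2^l(U))/(1+\eta\sigma_1^l(U))$ is strictly less than one, so the leading-order scalar ratio decreases in $t$. Provided $\epsilon$ is small enough that the $\mathcal{O}(\epsilon^2)$ corrections remain dominated across the entire horizon, this monotonicity transfers to the full bound including $\mb{E}_t$. The existence of $\kappa < 1$ is then immediate by picking any fixed value in $(0,1)$, which renders $t(\kappa,l)$ finite; simultaneously, Lemma \ref{lem:gauss_init_value} combined with the Gaussian initialization $x_0 = v_1 + g$ ensures $|v_1^\top x_0|$ is bounded away from zero and $\|x_0\|_2 = \mathcal{O}(1)$ with high probability, so that the logarithm in \eqref{eq:t_kappa_l} is well-defined and bounded.

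The principal technical obstacle will be the uniform-in-$t$ control of the error term. Lemma \ref{lem:E_t_spectral} presumably yields $\|\mb{E}_t\|_S \leq C(t,\eta,l,\sigma_1(U))\,\epsilon^3$ where $C(t,\cdot)$ may grow polynomially or even geometrically in $t$, since $\mb{E}_t$ accumulates higher-order cross-terms generated at every gradient step. The key balancing act is therefore to choose $\epsilon$ small enough (depending on $T$) so that $C(T,\cdot)\epsilon^2 \ll |v_1^\top x_0|^l(1+\eta\sigma_1^l(U))^{t(\kappa,l)}$, which in turn requires $\epsilon$ to shrink at least as fast as some power of $(1+\eta\sigma_1^l(U))^{-T/2}$. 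Once this quantitative dependence is pinned down, the rest of the argument is an arithmetic manipulation of the inequality produced by Lemma \ref{lem:w_eig_ratio}, and the theorem follows.
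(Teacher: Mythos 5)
Your proposal follows essentially the same route as the paper: invoke Lemma \ref{lem:w_eig_ratio}, control $\|\mb{E}_t\|_S/\epsilon$ uniformly over $[0,T]$ via Lemma \ref{lem:E_t_spectral} by taking $\epsilon$ small enough (noting that the bound is only valid for $t\le t_s$ and that $t_s\to\infty$ as $\epsilon\to 0$), and then solve the leading-order scalar inequality for $t$. The one place you should be careful is that "absorbing the error terms and then solving the clean geometric ratio for $\kappa$" is not quite free: at $t=t(\kappa,l)$ the leading ratio equals $\kappa$ exactly, and the $\mathcal{O}(\epsilon^2)$ corrections in both numerator and denominator push the true ratio strictly above $\kappa$; the paper builds in slack by targeting $\zeta=\kappa/2$ in two separate sufficient conditions ($\lambda_2^v(\mb{\tilde w}_t)\le\tfrac{\zeta}{2}\lambda_1^v(\mb{\tilde w}_t)$ and $\|\mb{E}_t\|_S\le\tfrac{\zeta}{2}\lambda_1^v(\mb{\tilde w}_t)$) and then recombines them to get the final $2\zeta=\kappa$ bound, which also shifts the constant inside the first logarithm of $t(\kappa,l)$ by a benign factor. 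Your monotonicity remark is correct but unnecessary, since the derived condition $t\ge t(\kappa,l)$ already covers the entire interval once the error term is controlled up to $T$.
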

By using the initialization introduced in Lemma~\ref{lem:gauss_init_value}, we can improve the result of Theoerem~\ref{thm:implicit_bias_gd}, which does not need $\epsilon$ to be arbitrarily small. The full details are presented in Corollary~\ref{cor:asymp_implicit_bias} in Appendix \ref{sec:app_implicit_proofs}, stating that as along as $t \asymp \ln\left(1/\kappa \right) \ln\left( (1+\eta \sigma_1^l(U))/(1+\eta \sigma_2^l(U)) \right)^{-1}$, $\mb{w}_t$ will be $\kappa$-rank-1, as long as $\epsilon$ is chosen as a function of $U, r, n, L_s$, and $\kappa$. Note that we say a tensor $\mb{w}$ is "$\kappa$-rank-1" if $\lambda^v_{2}(\mb{w})/\lambda^v_1(\mb{w}) \leq \kappa$.

\section{Approximate Rank-1 Tensors are Benign}

Now that we have established the fact that performing gradient descent on \eqref{eq:lifted_main_highr} will penalize the tensor towards rank-1, it begs the question whether approximate rank-1 tensors can also escape from saddle points, which is the most important question under study in this paper. Please find the proofs to the results in this section in Appendix~\ref{sec:app_rank1_properties}.

To do so, we first introduce a \emph{major spectral} decomposition of symmetric tensors that is helpful.
\begin{proposition}\label{prop:tensor_rank1_breakdown}
	Given a symmetric tensor $\mb{w} \in \RR^{nr \circ l}$, it can be decomposed into two terms, namely a term consisting of its dominant component and another term that is orthogonal to this direction:
	\begin{equation}\label{eq:tensor_rank1_breakdown}
		\mb{w} = \pm \lambda_1^v(\mb{w}) w_s^{\otimes l} + \mb{w}^\dagger \coloneqq \mb{w}_\sigma + \mb{w}^\dagger, \quad w_s \in \RR^{n}, \ \|w_s\|_2=1
	\end{equation}
	where $\langle \mb{w}, w_s^{\otimes l} \rangle = \lambda_1^v(\mb{w})$ and $\langle \mb{w}^\dagger, w_s^{\otimes l} \rangle=0$. Furthermore, if $\mb{w}$ is a $\kappa$-rank-1 tensor, then $\|\mb{w}^\dagger\|_S \leq \kappa \lambda^v_1(\mb{w}_t)$.
\end{proposition}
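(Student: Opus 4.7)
The plan is to construct the decomposition explicitly from the variational characterization of $\lambda_1^v$, verify the orthogonality by direct computation, and then control $\|\mb{w}^\dagger\|_S$ by reducing it to the v-eigenvalue definition of $\lambda_2^v$. First, since $\lambda_1^v(\mb{w}) = \|\mb{w}\|_S = \sup_{\|u\|_2 = 1}|\langle \mb{w}, u^{\otimes l}\rangle|$ is the supremum of a continuous function over the compact unit sphere, it is attained at some unit vector $w_s \in \RR^{nr}$. Fix $\epsilon_0 = \sign(\langle \mb{w}, w_s^{\otimes l}\rangle) \in \{\pm 1\}$, set $\mb{w}_\sigma := \epsilon_0 \lambda_1^v(\mb{w})\, w_s^{\otimes l}$ and $\mb{w}^\dagger := \mb{w} - \mb{w}_\sigma$. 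Because $\|w_s^{\otimes l}\|_F^2 = \|w_s\|_2^{2l} = 1$, a one-line computation gives $\langle \mb{w}^\dagger, w_s^{\otimes l}\rangle = 0$, which establishes the orthogonality part of the claim.

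Before addressing the spectral-norm bound, I would record a second identity coming from first-order optimality of $w_s$ on the unit sphere (Lagrange multipliers applied to $u \mapsto \langle \mb{w}, u^{\otimes l}\rangle$), namely $\langle \mb{w}, w_s^{\otimes(l-1)}\rangle = \epsilon_0 \lambda_1^v(\mb{w})\, w_s$. Subtracting the analogous identity for $\mb{w}_\sigma$ yields $\langle \mb{w}^\dagger, w_s^{\otimes(l-1)}\rangle = 0$, which is the natural tensor analogue of the matrix-case identity $W^\dagger w_s = 0$. Now for any unit $u$ with $u \perp w_s$, we have $\langle w_s^{\otimes l}, u^{\otimes l}\rangle = (w_s^\top u)^l = 0$, so $\{w_s^{\otimes l}, u^{\otimes l}\}$ is a pair of orthogonal symmetric rank-1 tensors spanning an admissible 2-dimensional subspace in Definition \ref{def:v_eigenvalues}. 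Together with the trivial bound $|\langle \mb{w}, u^{\otimes l}\rangle| \leq \lambda_1^v(\mb{w})$, the max-min in that definition forces $|\langle \mb{w}, u^{\otimes l}\rangle| \leq \lambda_2^v(\mb{w}) \leq \kappa \lambda_1^v(\mb{w})$; and since $u \perp w_s$ also gives $\langle \mb{w}^\dagger, u^{\otimes l}\rangle = \langle \mb{w}, u^{\otimes l}\rangle$, the desired bound holds on the orthogonal slice of the sphere.

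For a general unit $u$, decompose $u = c\, w_s + s\, v$ with $v \perp w_s$, $\|v\|_2 = 1$, and $c^2 + s^2 = 1$. Using the symmetry of $\mb{w}^\dagger$ (inherited from $\mb{w}$ and $\mb{w}_\sigma$), the multilinear expansion reads
\[
\langle \mb{w}^\dagger, u^{\otimes l}\rangle = \sum_{k=0}^{l} \binom{l}{k} c^{\,l-k} s^{k}\, \langle \mb{w}^\dagger, w_s^{\otimes(l-k)} \otimes v^{\otimes k}\rangle,
\]
and the two contraction identities just derived kill the $k=0$ and $k=1$ terms, leaving only residuals with $k \geq 2$. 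The hard part, and the principal technical obstacle here, is controlling these residuals by $\kappa \lambda_1^v(\mb{w})$: unlike the matrix case, tensors lack a clean spectral decomposition that would reduce each $\langle \mb{w}, w_s^{\otimes(l-k)} \otimes v^{\otimes k}\rangle$ to a simple eigenvalue expression. My preferred route is to analyze a maximizer $u^\star$ of $|\langle \mb{w}^\dagger, u^{\otimes l}\rangle|$ on the sphere directly: its Lagrange condition $\langle \mb{w}^\dagger, (u^\star)^{\otimes(l-1)}\rangle = \pm \|\mb{w}^\dagger\|_S\, u^\star$, combined with the null-direction identity $\langle \mb{w}^\dagger, w_s^{\otimes(l-1)}\rangle = 0$, should force $w_s^\top u^\star = 0$, at which point the orthogonal case treated above applies verbatim. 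If this reduction is too rigid, a fallback is to apply the $\kappa$-rank-1 bound in auxiliary $2$-dimensional orthogonal rank-1 subspaces of the form $\mathrm{span}(w_s^{\otimes l}, (\alpha w_s + \beta v)^{\otimes l})$ with $(\alpha,\beta)$ chosen to keep the two generators orthogonal, which delivers bounds on linear combinations of the mixed contractions and then on each of them by a finite-dimensional linear algebra argument.
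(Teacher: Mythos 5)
Your construction of $\mb{w}_\sigma$ and $\mb{w}^\dagger$ and the verification $\langle\mb{w}^\dagger, w_s^{\otimes l}\rangle = 0$ are correct, and in fact cleaner than the paper's, which detours through a symmetric rank decomposition $\mb{w}=\sum_i\lambda_i x_i^{\otimes l}$ and a term-by-term parallel/orthogonal split before arriving at the same conclusion. Your auxiliary identity $\langle\mb{w}^\dagger, w_s^{\otimes(l-1)}\rangle = 0$ from the stationarity (Z-eigenvector) condition is a genuine addition that the paper does not record, and your treatment of the slice $u\perp w_s$ is sound: the pair $(w_s^{\otimes l}, u^{\otimes l})$ is an admissible orthogonal symmetric rank-1 pair in Definition~\ref{def:v_eigenvalues}, $|\langle\mb{w},u^{\otimes l}\rangle|\le\lambda_1^v(\mb{w})$ always, so the inner min over that pair is $|\langle\mb{w},u^{\otimes l}\rangle|$, which must therefore be $\le\lambda_2^v(\mb{w})\le\kappa\lambda_1^v(\mb{w})$.

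The gap is the extension to general $u$, and your preferred route (a) does not close it for $l>2$. Dotting the Lagrange condition $\langle\mb{w}^\dagger,(u^*)^{\otimes(l-1)}\rangle = \pm\|\mb{w}^\dagger\|_S\, u^*$ with $w_s$ produces the quantity $\langle\mb{w}^\dagger, w_s\otimes (u^*)^{\otimes(l-1)}\rangle$, which involves a \emph{single} contraction against $w_s$ and $l-1$ contractions against $u^*$. The identity you have, $\langle\mb{w}^\dagger, w_s^{\otimes(l-1)}\rangle = 0$, controls $l-1$ contractions against $w_s$ and says nothing about a single one; only when $l=2$ do these coincide, which is exactly why the familiar $W^\dagger w_s = 0$ argument makes the matrix case trivial. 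Route (b) is left as a sketch. So the key inequality $\|\mb{w}^\dagger\|_S\le\lambda_2^v(\mb{w})$ is not actually established by your argument. For context, the paper's own proof of this step is a one-line assertion (``since otherwise the dominant direction of $\mb{w}^\dagger$ will just become the second eigenvector of $\mb{w}$''), which implicitly makes the same unjustified reduction to the orthogonal slice; you have surfaced the real technical obstacle more honestly, but you have not resolved it, and as written this is a genuine gap.
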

Next, we characterize the first-order points of \eqref{eq:lifted_main_highr} with approximate rank-1 tensors in mind. Previously, we showed that if a given FOP of \eqref{eq:lifted_main_highr} is symmetric and rank-1, it has a one-to-one correspondence with FOPs of \eqref{eq:unlifted_main}. However, if the FOPs of \eqref{eq:lifted_main_highr} are not exactly rank-1, but instead $\kappa$-rank-1, it is essential to understand whether they maintain the previous properties. This will be addressed below.
\begin{proposition}\label{prop:kappa_fop}
	Assume that a symmetric tensor $\mb{w} \in \RR^{nr \circ l}$ is an FOP of \eqref{eq:lifted_main_highr}, meaning that \eqref{eq:focp_lifted_highr} holds. If it is a $\kappa$-rank-1 tensor with $\kappa \leq \mathcal{O}(1/\|M^*\|^2_F)$, then it admits a decomposition as
	\[
		\mb{w} = \pm \lambda_1^v(\mb{w}) \hat w^{\otimes l} + \mb{w}^\dagger
	\]
	with $\mat(\hat w) \in \RR^{n \times r}$ being an FOP of \eqref{eq:unlifted_main} and $\|\mb{w}^\dagger\|_S \leq \kappa \lambda_1^v(\mb{w})$ by definition.
\end{proposition}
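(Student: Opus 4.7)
The starting point is to apply Proposition~\ref{prop:tensor_rank1_breakdown}, yielding $\mb{w} = \pm \lambda_1^v(\mb{w}) \hat w^{\otimes l} + \mb{w}^\dagger$ with $\|\hat w\|_2=1$ and $\|\mb{w}^\dagger\|_S \le \kappa\lambda_1^v(\mb{w})$. The plan is to show that the lifted first-order condition $\nabla h^l(\mb{w})=0$ forces $\mat(\hat w)\in\RR^{n\times r}$ to satisfy the unlifted stationarity equation $\nabla f(\mat(\hat w)) = \mathcal{A}^*\mathcal{A}(\mat(\hat w)\mat(\hat w)^\top - M^*)\mat(\hat w) = 0$, with the hypothesis $\kappa = \mathcal{O}(1/\|M^*\|_F^2)$ being precisely what is needed to kill the contributions of $\mb{w}^\dagger$.

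First I would unpack $\nabla h^l(\mb{w})$ via the chain rule through $h^l(\mb{w}) = f^l(\langle\mb{P}(\mb{w}),\mb{P}(\mb{w})\rangle_{2*[l]})$: the gradient is a symmetric $l$-way tensor built by contracting the residual tensor $\mb{R} := \langle \mb{A}^{\otimes l}, \langle\mb{P}(\mb{w}),\mb{P}(\mb{w})\rangle_{2*[l]}\rangle - b^{\otimes l}$ against $\mb{A}^{\otimes l}$ and a single copy of $\mb{P}(\mb{w})$. Substituting the decomposition and using \eqref{eq:highr_vectorization}, every occurrence of $\mb{w}$ splits into a leading rank-1 piece plus a $\mb{w}^\dagger$-correction; any cross- or $\mb{w}^\dagger$-quadratic term carries at least one factor of $\|\mb{w}^\dagger\|_S \le \kappa\lambda_1^v(\mb{w})$, which by definition of the spectral norm bounds the Frobenius contribution of such a term to any subsequent tensor contraction.

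Next I would evaluate the lifted FOP equation on the rank-1 test direction $\hat w^{\otimes(l-1)}$ in $l-1$ of its $l$ modes, converting the single remaining free mode to a matrix in $\RR^{n \times r}$ via $\mb{P}$. In the exact rank-1 case $\mb{w}^\dagger = 0$, this contraction must reproduce the unlifted gradient $\nabla f(\mat(\hat w))$ up to a positive scalar factor, since this is precisely the rank-1 lifted/unlifted FOP correspondence extended to general $r$ (the analogue for $r=1$ being \cite{ma2023over}). When $\mb{w}^\dagger \neq 0$ the same contraction yields $\nabla f(\mat(\hat w))$ plus error terms whose Frobenius norms are dominated by $C\,\kappa\,\|M^*\|_F^2 \cdot \lambda_1^v(\mb{w})^{2l-1}$, where $C$ depends only on the RSS constant $L_s$. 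The assumption $\kappa \leq \mathcal{O}(1/\|M^*\|_F^2)$ is exactly what makes this residual negligible relative to the leading scale $\lambda_1^v(\mb{w})^{2l-1}$, and a short perturbation argument then absorbs any leftover into a slight redefinition of the pair $(\hat w, \mb{w}^\dagger)$ that still obeys the norm bound of Proposition~\ref{prop:tensor_rank1_breakdown}, after which the vanishing of the lifted gradient forces $\nabla f(\mat(\hat w))=0$ exactly.

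The main obstacle is carrying out the contractions cleanly enough for $\nabla f(\mat(\hat w))$ to genuinely emerge as the leading term: tracking how the permutation tensor $\mb{P}$ interacts with $l$-fold outer powers via \eqref{eq:highr_vectorization} is delicate, and must be reconciled with the nonlinear-in-$l$ expressions coming from $(\mathcal A(XX^\top))^{\otimes l}$ and $b^{\otimes l} = \mathcal{A}(M^*)^{\otimes l}$. The shape of the hypothesis $\kappa\leq\mathcal{O}(1/\|M^*\|_F^2)$ is essentially dimensional — it balances the $b$-driven cross-terms against the $\mathcal{A}^*\mathcal{A}(XX^\top)\mat(\hat w)$ contribution — but the closing step, upgrading an approximate unlifted FOP to an exact one, will likely need a small implicit-function-style perturbation of the decomposition rather than a pure inequality.
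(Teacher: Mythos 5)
Your opening two paragraphs are on the right track: applying Proposition~\ref{prop:tensor_rank1_breakdown} to write $\mb{w} = \mb{w}_\sigma + \mb{w}^\dagger$, substituting into $\nabla h^l$, and organizing the expansion so that every cross- or $\mb{w}^\dagger$-quadratic term picks up at least one factor of $\|\mb{w}^\dagger\|_S \le \kappa\lambda_1^v(\mb{w})$ is exactly what the paper does — it groups the gradient into $\nabla h^l(\mb{w}) = \mb{h}_1 + \mb{h}_2$, where $\mb{h}_1$ is the ``clean'' $\mb{w}_\sigma$-only piece and $\mb{h}_2$ collects the error terms, with $\|\mb{h}_2\|_S \le (3\kappa+3\kappa^2+\kappa^3+\kappa\|M^*\|^2_F)\lambda_1^v(\mb{w})^3 r^l\|\mb{A}^*\mb{A}\|^l_*$ after repeated applications of the nuclear/spectral duality (Lemma~\ref{lem:tensor_norm_upper}). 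The identification of $\kappa \le \mathcal{O}(1/\|M^*\|_F^2)$ as the hypothesis that tames the $b$-driven contribution is also the right reading, though your power $\lambda_1^v(\mb{w})^{2l-1}$ is off — the gradient is cubic in $\mb{w}$, and after taking $\ell$-th roots the relevant scaling is $\lambda_1^v(\mb{w})^3$ per the paper's bound.

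The genuine gap is your closing step. You contract the FOP identity against $\hat w^{\otimes(l-1)}$ to obtain $\nabla f(\mat(\hat w)) = \mathcal{O}(\kappa)$, an \emph{approximate} unlifted stationarity, and then appeal to ``a short perturbation argument'' to redefine $(\hat w, \mb{w}^\dagger)$ so that $\hat w$ becomes an exact FOP. This is precisely the hard part, and you do not carry it out: there is no reason offered why a $\kappa$-approximate stationary point of $f$ must have an exact stationary point nearby, nor why moving $\hat w$ to such a point would preserve the decomposition $\mb{w} = \pm\lambda_1^v(\mb{w})\hat w^{\otimes l} + \mb{w}^\dagger$ with $\|\mb{w}^\dagger\|_S \le \kappa\lambda_1^v(\mb{w})$. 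The paper avoids this entirely by staying in tensor space: since $\nabla h^l(\mb{w}) = 0$ exactly, the reverse triangle inequality gives $\|\mb{h}_1\|_S = \|\mb{h}_2\|_S$, and because $\|\mb{h}_2\|_S$ carries a factor of $\kappa$ while $\|\mb{h}_1\|_S$ does not, for $\kappa$ small the only way the equality can hold is $\|\mb{h}_1\|_S = \|\mb{h}_2\|_S = 0$. Then $\langle\mb{h}_1, u^{\otimes l}\rangle$ is an $l$-th power of the scalar $\langle\nabla f(\mat(w_s))\mat(w_s), \mat(u)\rangle$, so its vanishing for all $u$ forces $\nabla f(\mat(w_s))\mat(w_s) = 0$ exactly — for the \emph{original} $\hat w = w_s$ from Proposition~\ref{prop:tensor_rank1_breakdown}, not a perturbed one. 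That exact-vanishing argument is the essential mechanism you need; the perturbation route would at best prove a weaker, approximate version of the proposition and as written is not a complete proof.
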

The proposition above asserts that for any given FOP of \eqref{eq:lifted_main_highr}, if it is $\kappa$-rank-1 rather than being truly rank-1, it will consist of a rank-1 term representing a lifted version of an unlifted FOP, as well as a term with a small spectral norm. Referring to \eqref{eq:magnitude_t_w_eig}, it is possible to achieve a significantly low $\kappa$ through a moderate number of iterations. This result, considered the cornerstone of this paper, demonstrates that the use of gradient descent with small initialization will find critical points that are lifted FOPs of \eqref{eq:unlifted_main} with added noise, maintaining a robust association between FOPs of \eqref{eq:lifted_main_highr} and \eqref{eq:unlifted_main}. This finding also facilitates this subsequent theorem:
\begin{theorem}\label{thm:approx_rank1_saddle}
	Assume that a symmetric tensor $\mb{\hat w} \in \RR^{nr \circ l}$ is an FOP of \eqref{eq:lifted_main_highr} that is $\kappa$-rank-1 with $\kappa \leq \mathcal{O}(1/\|M^*\|^2_F)$. Consider its major spectral decomposition $\mb{\hat w} = \lambda_S \hat x^{\otimes l} + \mb{\hat w^\dagger}$ with $\hat x \in \RR^{nr}$,	then it has a rank-1 escape direction if $\hat X  = \mat(\hat x)$ satisfies the inequality
	\begin{equation}\label{eq:main_thm_cond}
		\|M^* - \hat X \hat X^\top \|^2_F \geq \frac{L_s}{\alpha_s} \lambda_r(\hat X \hat X^\top) \tr(M^*) + \mathcal{O}(r \kappa^{1/l})
	\end{equation}
	where $l$ is odd and large enough so that $l > 1/(1-\log_2(2 \beta))$ and $\beta$ is defined as
	\[
		\beta = \frac{L_s \tr(M^*) \lambda_r(\hat X \hat X^\top)}{\alpha_s \|M^* - \hat X \hat X^\top \|^2_F - \mathcal{O}(r \kappa^{1/l})}.
	\]
\end{theorem}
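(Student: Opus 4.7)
The plan is to exhibit an explicit rank-1 perturbation direction $\mb{d} = d^{\otimes l}$ for some $d \in \RR^{nr}$ such that the quadratic form $\langle \nabla^2 h^l(\mb{\hat w})[\mb{d}], \mb{d}\rangle$ is strictly negative under the stated inequality. This mirrors the construction used in \cite{ma2023over} for $r=1$ and exactly-rank-1 tensors, but now has to tolerate the non-rank-1 remainder $\mb{\hat w^\dagger}$ and the matricial structure of $\mat(\hat x)$.

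First I would invoke Proposition~\ref{prop:kappa_fop}: since $\mb{\hat w}$ is a $\kappa$-rank-1 FOP with $\kappa \leq \mathcal{O}(1/\|M^*\|^2_F)$, the matrix $\hat X = \mat(\hat x)$ is an FOP of the unlifted problem \eqref{eq:unlifted_main}, up to a perturbation $\mb{\hat w^\dagger}$ with $\|\mb{\hat w^\dagger}\|_S \leq \kappa \lambda_1^v(\mb{\hat w})$. This reduces the analysis to perturbing a cleanly lifted critical point and absorbing the error. Next I would choose the escape direction based on the residual $R = M^* - \hat X \hat X^\top$: concretely, let $q \in \RR^n$ be a unit vector that captures a dominant direction of $R$ orthogonal to the column span of $\hat X$ (so that the Gauss--Newton self-curvature is controlled by $\lambda_r(\hat X \hat X^\top) \tr(M^*)$ rather than the ambient $\|\hat X\|^2$), and lift to $d = \vecc(qv^\top)$ for an appropriate $v$, giving $\mb{d} = d^{\otimes l}$.

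With this direction fixed, I would expand $\langle \nabla^2 h^l(\mb{\hat w})[\mb{d}], \mb{d}\rangle$ using the identity $h^l(\mb{w}) = f^l(\langle \mb{w},\mb{w}\rangle_{2*[l]})$. The expansion yields two groups of terms: a negative-curvature contribution driven by $\langle \mathcal{A}^*\mathcal{A}(R), qq^\top\rangle$ raised to an $l$-dependent power, and a positive Gauss--Newton term driven by $\|\mathcal{A}(\hat X v q^\top)\|^2$. Bounding the first below using the $(\alpha_s,2r)$-RSC bound and the second above using the $(L_s,2r)$-RSS bound produces, after taking the $l$-fold tensor power, a dichotomy of the shape
\begin{equation*}
\langle \nabla^2 h^l(\mb{\hat w})[\mb{d}], \mb{d}\rangle \;\lesssim\; -\,\alpha_s^l \|R\|_F^{2l} \;+\; L_s^l \bigl(\lambda_r(\hat X \hat X^\top)\tr(M^*)\bigr)^{l} \;+\; (\text{error from }\mb{\hat w^\dagger}).
\end{equation*}
Factoring out a common power and using the definition of $\beta$, the sign of the main part becomes negative precisely when $(2\beta)^l < \beta$, which in turn is equivalent to $l > 1/(1-\log_2(2\beta))$. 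The oddness of $l$ is needed so that the signed contraction $\langle \mb{\hat w^\dagger}, d^{\otimes l}\rangle$ does not spuriously cancel and so that the alternating signs in the Hessian expansion line up in favor of the escape direction.

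The main obstacle is bookkeeping the remainder $\mb{\hat w^\dagger}$. Proposition~\ref{prop:tensor_rank1_breakdown} only controls it in the tensor spectral norm, whereas the Hessian expansion couples it against terms living in the matrix Frobenius geometry after unfolding via $\mb{P}$. Translating the spectral bound $\|\mb{\hat w^\dagger}\|_S \leq \kappa \lambda_1^v(\mb{\hat w})$ into a usable Frobenius-type bound on the unlifted contribution introduces a factor that, after the $l$-fold lifting, scales as $\kappa^{1/l}$, which is exactly the slack $\mathcal{O}(r\kappa^{1/l})$ appearing in \eqref{eq:main_thm_cond}; the factor $r$ comes from summing over the $r$ columns when going from $\RR^{nr}$ back to $\RR^{n\times r}$. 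Once this error is absorbed into the condition, the negative-curvature computation closes and $\mb{d}$ is certified as a rank-1 escape direction, completing the proof.
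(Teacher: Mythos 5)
Your overall strategy matches the paper's: invoke Proposition~\ref{prop:kappa_fop} to place $\hat X=\mat(\hat x)$ as an FOP of the unlifted problem, reuse the rank-1 direction $\Delta=\vecc(U)^{\otimes l}$ from Theorem~\ref{thm:socp_highr}, split $\nabla^2 h^l(\mb{\hat w})[\Delta,\Delta] = \nabla^2 h^l(\mb{w}_\sigma)[\Delta,\Delta] + (\text{cross terms in } \mb{\hat w^\dagger})$, bound the cross terms in spectral/nuclear norm via Proposition~\ref{prop:tensor_rank1_breakdown} and Lemma~\ref{lem:tensor_norm_upper}, and then take an $l$-th root to convert the $\kappa$-error into the $\mathcal{O}(r\kappa^{1/l})$ slack.

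There is one genuine gap in how you specify the escape direction. You take $q$ to be ``a dominant direction of $R=M^*-\hat X\hat X^\top$ orthogonal to $\mathrm{col}(\hat X)$,'' but this does not deliver the quantitative negative-curvature bound you then assert. The relevant quantity is $\langle \nabla f(\hat X\hat X^\top), qq^\top\rangle = -\langle\mathcal{A}^*\mathcal{A}(R), qq^\top\rangle$, and the paper's Lemma~\ref{lem:smallest_eig_socp} only lower-bounds $-\lambda_{\min}(\nabla f(\hat X\hat X^\top))$ by $\alpha_s\|R\|_F^2/(2\tr M^*)$; to harvest that bound one must pick $u$ as the \emph{bottom eigenvector of $\nabla f(\hat X\hat X^\top)$} (equivalently, the top eigenvector of $\mathcal{A}^*\mathcal{A}(R)$, which is also automatically orthogonal to $\mathrm{col}(\hat X)$ by the FOP condition). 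An eigenvector of $R$ itself is not the same object precisely in the large-RIP regime this theorem targets, and the RSC property gives no comparable pointwise control of $\langle\mathcal{A}^*\mathcal{A}(R), qq^\top\rangle$ for such a $q$. Two smaller slips: the condition $l > 1/(1-\log_2(2\beta))$ comes from requiring $2^{(l-1)/l}>2\beta$ (after the $1/2^{l-1}$ factor from the Gauss--Newton side), not from $(2\beta)^l < \beta$; and the oddness of $l$ is needed so that $C_1^l = (\lambda_{\min}(\nabla f))^l = -G^l$ stays negative, independently of any sign cancellation in $\langle \mb{\hat w^\dagger}, d^{\otimes l}\rangle$.
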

This theorem conveys the message that by running GD on \eqref{eq:lifted_main_highr}, all critical points have escape directions as long as the point is not close to the ground truth solution. In Appendix~\ref{sec:app_highr}, we present Theorem~\ref{thm:socp_corollary} to provide sufficient conditions for the conversion to hold globally when \eqref{eq:main_thm_cond} is hard to hold.

\section{Numerical Experiments}
In this section\footnote{'https://github.com/anonpapersbm/implicit\_bias\_tensor',run on 2021 Macbook Pro}, after we run a given algorithm on \eqref{eq:lifted_main_highr} to completion and obtain a final tensor $\mb{w}_T$, we then apply tensor PCA (detailed in Appendix~\ref{sec:app_algos}) on $\mb{w}_T$ to extract its dominant rank-1 component and recover $X_T \in \RR^{n \times r}$ such that $(\mb{w}_T)_s = \lambda_s \vecc(X_T)^{\otimes l}$. Since $\mb{w}_T$ will be approximately rank-1, the success of this operation is expected \cite{kofidis2002best,wu2020symmetric}. We consider a trial to be successful if the recovered $X_T$ satisfies $\|X_T X_T^\top - M^*\|_F \leq 0.05$. We also initialize our algorithm as per Lemma~\ref{lem:gauss_init_value}.

\subsection{Perturbed Matrix Completion}
The perturbed matrix completion problem is introduced in \cite{yalcin2022semidefinite}, which is a noisy version of classic matrix completion problems. The $\mathcal{A}$ operator is introduced as
\begin{equation} \label{eq:operator-ms}
    \mathcal{A}_{\rho}({\bf M})_{ij} := \begin{cases} {\bf M}_{ij}, & \text{if } (i,j) \in \Omega\\ \rho \mathbf{M}_{ij}, & \text{otherwise} \end{cases},
\end{equation}
where $\Omega$ is a measurement set such that $\Omega = \{ (i,i), (i,2k), (2k,i) | \ \ \forall i \in [n], k \in [ \floor{n/2}]\}$. \cite{yalcin2022semidefinite} has proved that each such instance has $\mathcal{O}(2^{\ceil{n/2}}-2)$ spurious local minima, while it satisfies the RIP property with $\delta_{2r} = (1-\rho)/(1+\rho)$ for some sufficiently small $\rho$. This implies that common first-order methods fail with high probability for this class of problems. In our experiment, we apply both lifted and unlifted formulations to \eqref{eq:operator-ms} with $\rho = 0.01$, yielding $\delta_{2r} \approx 1$. We test different values of $n$ and $\epsilon$, using a lifted level of $l=3$. We ran 10 trials each to calculate success rate. If unspecified in the plot, we default $n=10$, $\epsilon=10^{-7}$. Figure~\ref{fig:pmc_1} reveals a higher success rate for the lifted formulation across different problem sizes, with smaller problems performing better as expected (since larger problems require a higher lifting level). Success rates improve with smaller $\epsilon$, emphasizing the importance of small initialization. We employed customGD, a modified gradient descent algorithm with heuristic saddle escaping. This algorithm will deterministically escape from critical points utilizing knowledge from the proof of Theorem~\ref{thm:socp_highr}. For details please refer to Appendix \ref{sec:app_algos}. Furthermore, to showcase the implicit penalization affects of GD, we obtained approximate measures for $\lambda^v_{2}(\mb{w}_t)/\lambda^v_1(\mb{w}_t)$ (since exactly solving for them is NP-hard) along the trajectory, and presented the results and methods in Appendix~\ref{sec:app_exp}.

\begin{figure}[ht]
    \centering
    \includegraphics[width=0.35\linewidth]{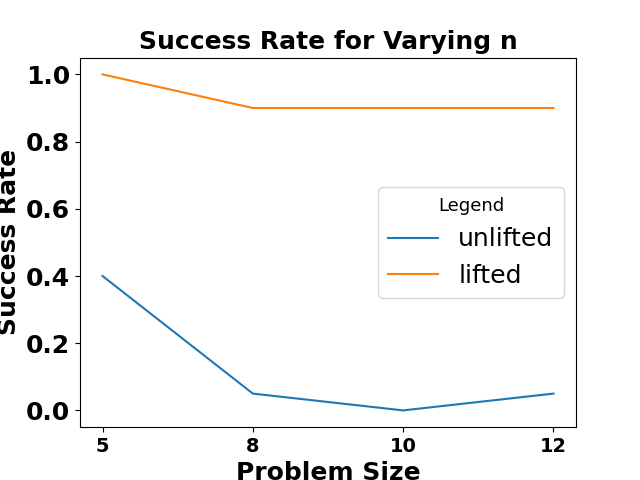}
    \includegraphics[width=0.35\linewidth]{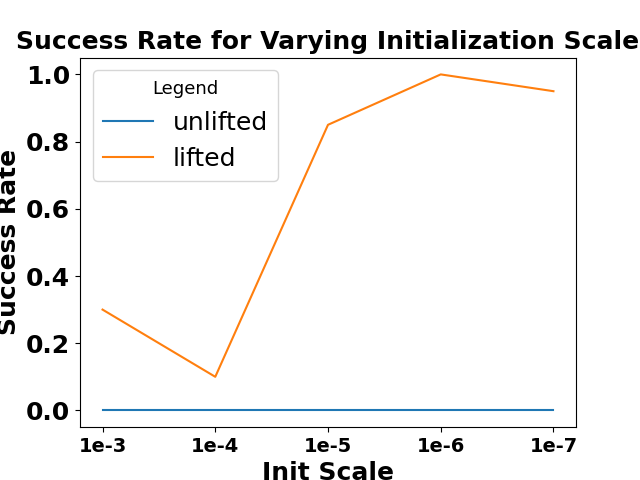}
    \caption{Success rate of the lifted formulation versus the unlifted formulation against varying $n$ and $\epsilon$. The algorithm of choice is CustomGD (details in Appendix \ref{sec:app_algos}).}
    \label{fig:pmc_1}
\end{figure}

Additionally, we examine different algorithms for \eqref{eq:lifted_main_highr}, including customGD, vanilla GD, perturbed GD (\cite{jin2021nonconvex}, for its ability to escape saddles), and ADAM \cite{kingma2014adam}. Figure~\ref{fig:pmc_2} suggest that ADAM is an effective optimizer with a high success rate and rapid convergence, indicating that momentum acceleration may not hinder implicit regularization and warrants further research. Perturbed GD performed poorly, possibly due to random noise disrupting rank-1 penalization.

\begin{figure}[htbp]
    \centering
    \includegraphics[width=0.35\linewidth]{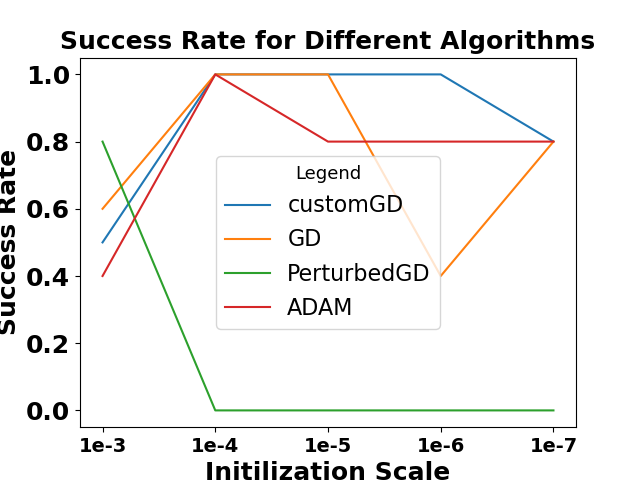}
    \includegraphics[width=0.35\linewidth]{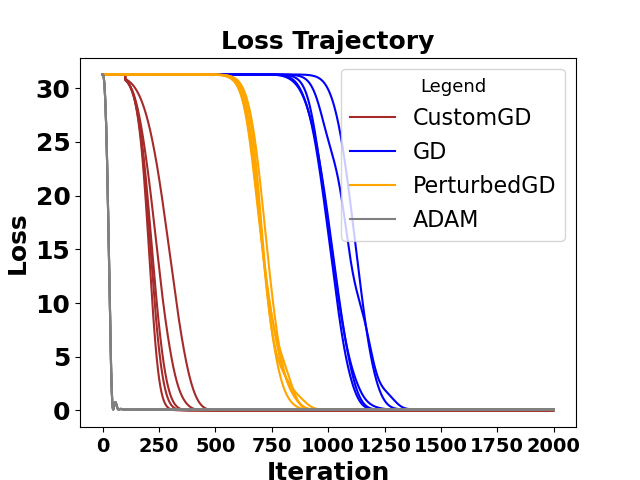}
    \caption{Performance of different algorithms applied to the lifted formulation \eqref{eq:lifted_main_highr}. }
    \label{fig:pmc_2}
\end{figure}


\subsection{Shallow Neural Network Training with Quadratic Activation}
It has long been known that the matrix sensing problem \eqref{eq:unlifted_main} includes the training of two-layer neural networks (NN) with quadratic activation as a special case \cite{li2018algorithmic}. In summary, the output of the neural network $y \in \RR^m$ with respect to $m$ inputs $\{d_i\}_{i=1}^m \in \RR^n$ can be expressed as $y_i = \mathbf{1}^\top q(X^\top d_i)$, which implies $y_i = \langle d_i d_i^\top, XX^\top \rangle$, where $q(\cdot)$ is the element-wise quadratic function and $X \in \RR^{n \times r}$ in \eqref{eq:unlifted_main} represents the weights of the neural network. Thus $r$ represents the number of hidden neurons. In our experiment, we demonstrate that when $m$ is small, the lifted framework \eqref{eq:lifted_main_highr} outperforms standard neural network training in success rate, yielding improved recovery of the true weights. We set the hidden neurons number to be $n$ for the standard network training, thereby comparing the existing over-parametrization framework (recall Section~\ref{sec:related}, with $r_{\text{search}}=n$) with the lifted one . We employ the ADAM optimizer for both methods. Table~\ref{tab:compare} showcases the success rate under various problem and sample sizes. Sampling both data and true weights $Z \in \RR^{n \times r}$ from an i.i.d Gaussian distribution, we calculate the observations $y$ and attempt to recover $Z$ using both approaches. As the number of samples increases, so does the success rate, with the lifted approach offering significantly better accuracy overall, even when the standard training has a 0\% success rate.

\begin{table}[h]
    \centering
    \begin{tabular}{@{}c@{\hspace{1cm}}c@{}}
        \begin{minipage}{0.45\linewidth}
            \centering
            \begin{tabular}{|c|c|c|c|}
                \hline
                Success Rate & m = 20 & m=30 & m=40 \\ \cline{1-4}
            n=8 & 0.9(0) & 1(0.3) & 0.9(0.5) \\ \cline{1-4}
            n=10 & 0.2(0) & 0.6(0) & 0.8(0) \\ \cline{1-4}
            n=12 & 0.1(0) & 0.4(0) & 0.8(0) \\ \cline{1-4}
            \end{tabular}
            \subcaption{Ground truth weight with $r=1$}
        \end{minipage}
        &
        \begin{minipage}{0.45\linewidth}
            \centering
            \begin{tabular}{|c|c|c|c|}
                \hline
                Success Rate & m = 30 & m=40 & m=50 \\ \cline{1-4}
            n=8 & 0.3(0) & 0.3(0) & 0.8(0) \\ \cline{1-4}
            n=10 & 0.3(0) & 0.4(0) & 0.2(0) \\ \cline{1-4}
            n=12 & 0(0) & 0(0) & 0.2(0) \\ \cline{1-4}
            \end{tabular}
            \subcaption{Ground truth weight with $r=2$}
        \end{minipage}
    \end{tabular}
    \captionof{table}{Success rate of NN training using  \eqref{eq:lifted_main_highr} and original formulation. The number inside the parentheses denotes the success rate of the original formulations. $\epsilon = 10^{-5}$ and $l=3$.}
    \label{tab:compare}
\end{table}

\section{Conclusion}
Our study highlights the pivotal role of gradient descent in inducing implicit regularization within tensor optimization, specifically in the context of the lifted matrix sensing framework. We reveal that GD can lead to approximate rank-1 tensors and critical points with escape directions when initialized at an adequately small scale. This work also contributes to the usage of tensors in machine learning models, as we introduce novel concepts and techniques to cope with the intrinsic complexities of tensors. 
\newpage
\section{Acknowledgement}
This work was supported by grants from ARO, ONR, AFOSR, NSF, and the UC Noyce Initiative.
\bibliography{references}
\newpage
\appendix

\section{Additional Definitions and Supporting Lemmas}
\subsection{Notations} \label{sec:app_notations}
In this paper, $\sigma_i(M)$ denotes the $i$-th largest singular value of a matrix $M$, and $\lambda_i(M)$ denotes the $i$-th largest eigenvalue of $M$. $\norm{v}$ denotes the Euclidean norm of a vector $v$, while $\norm{M}_F$ and $\norm{M}_2$ denote the Frobenius norm and induced $l_2$ norm of a matrix $M$, respectively. For a matrix $M$, $\vecc(M)$ is the usual vectorization operation by stacking the columns of the matrix $M$ into a vector. For a vector $v \in \RR^{n^2}$, $\mat(v)$ converts $v$ to a square matrix and $\mat_S(v)$ converts $v$ to a symmetric matrix, i.e., $\mat(v)=M$ and $\mat_S(v) = (M+M^T)/2$, where $M \in \RR^{n \times n}$ is the unique matrix satisfying $v=\vecc(M)$. $[n]$ denotes the integer set $[1,\dots,n]$, and $\circ l$ stands for the shorthand of repeated cartesian product $\times \dots \times$ for $l$ times. The symbol $\oslash$ denotes the kronecker product, while $\otimes$ denotes tensor outer product. $\asymp$ denotes "asymptotic to", meaning that the two terms on both sides of this symbol have the same order of magnitude.

\subsection{Critical Conditions for Unlifted Problem}\label{sec:app_fopsop}
We present the FOP and SOP conditions for the unlifted problem as our benchmark.

\begin{lemma}\label{lem:cp_unlifted}
	The vector $\hat X \in \RR^{n \times r}$ is an SOP of \eqref{eq:unlifted_main} if and only if
	\begin{gather}\label{eq:focp_unlifted}
		\nabla f(\hat X \hat X^\top)\hat X = 0, \\
		\label{eq:socp_unlifted}
		2 \langle \nabla f(\hat X \hat X^\top), U U^\top \rangle + [\nabla^2 f(\hat X \hat X^\top)](\hat X U^\top + U \hat X^\top,\hat X U^\top + U \hat X^\top) \geq 0 \quad \forall U \in \RR^{n \times r}
	\end{gather}
	with \eqref{eq:focp_unlifted} being the necessary and sufficient condition for $\hat X$ to be an FOP. 
\end{lemma}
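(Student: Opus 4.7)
The plan is to derive both conditions directly from the chain rule, exploiting the fact that $f$ in the lemma statement is really a composition of a convex quadratic $\tilde f(M) \coloneqq \frac{1}{2}\|\mathcal{A}(M)-b\|^2$ (a function of a symmetric matrix $M$) with the parametrization $X\mapsto XX^\top$. The key preliminary observations are that (i) $\nabla \tilde f(M) = \mathcal{A}^*(\mathcal{A}(M)-b)$ is symmetric because each $A_i$ is symmetric (by the reduction noted near \eqref{eq:SDP main}), and (ii) $\tilde f$ is quadratic, so its second-order Taylor expansion is exact, with Hessian the constant self-adjoint operator $\mathcal{A}^*\mathcal{A}$.

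First I would establish the first-order condition. Applying the chain rule to $f(X)=\tilde f(XX^\top)$ gives $\nabla f(X)=\bigl(\nabla \tilde f(XX^\top)+\nabla \tilde f(XX^\top)^\top\bigr)X = 2\nabla \tilde f(XX^\top)X$, where the last equality uses the symmetry of $\nabla \tilde f$. The stationarity condition $\nabla f(\hat X)=0$ therefore becomes $\nabla \tilde f(\hat X\hat X^\top)\hat X=0$, which is exactly \eqref{eq:focp_unlifted}. This is simultaneously necessary and sufficient for $\hat X$ to be an FOP by definition.

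Next I would compute the second-order term. Substitute $X=\hat X + tU$ into $f$ and expand $(\hat X+tU)(\hat X+tU)^\top = \hat X\hat X^\top + t(\hat X U^\top+U\hat X^\top) + t^2 UU^\top$. Setting $M=\hat X\hat X^\top$ and $\Delta(t)=t(\hat X U^\top+U\hat X^\top)+t^2 UU^\top$, the exact Taylor expansion of the quadratic $\tilde f$ gives
\begin{equation*}
f(\hat X+tU) = \tilde f(M) + \langle \nabla\tilde f(M),\Delta(t)\rangle + \tfrac12[\nabla^2\tilde f(M)]\bigl(\Delta(t),\Delta(t)\bigr).
\end{equation*}
Collecting the coefficient of $t^2$: the linear term contributes $\langle \nabla\tilde f(M),UU^\top\rangle$, while the quadratic term contributes $\tfrac12[\nabla^2\tilde f(M)](\hat X U^\top+U\hat X^\top,\hat X U^\top+U\hat X^\top)$. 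Doubling to obtain $\frac{d^2}{dt^2}f(\hat X+tU)|_{t=0}$ yields exactly the left-hand side of \eqref{eq:socp_unlifted}. By the standard second-order necessary condition, at an SOP this quantity must be nonnegative for every $U\in\RR^{n\times r}$, which is \eqref{eq:socp_unlifted}; combined with \eqref{eq:focp_unlifted} it completes the characterization.

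There is no serious obstacle here: the argument is a bookkeeping exercise in the chain rule. The only places that require care are making sure the factor of $2$ in the first-order condition comes from symmetrizing $\nabla\tilde f(M)+\nabla\tilde f(M)^\top$, and confirming that the $t^2$-coefficient aggregates contributions from both the gradient applied to $UU^\top$ and the Hessian applied to $\hat X U^\top+U\hat X^\top$; once those two bookkeeping points are handled, \eqref{eq:focp_unlifted} and \eqref{eq:socp_unlifted} fall out verbatim.
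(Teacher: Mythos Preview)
Your derivation is correct. The paper does not actually supply its own proof of this lemma; it simply cites the matrix-sensing literature (e.g.\ \cite{ha2020equivalence,zhang2021general,li2019non}) and moves on. Your chain-rule/Taylor-expansion argument is the standard route those references take, and your bookkeeping of the $t^2$-coefficient is accurate, so there is nothing to compare beyond noting that you have written out explicitly what the paper leaves to citation.
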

A proof to the above lemma can be found in many matrix sensing literatures, including \cite{ha2020equivalence,zhang2021general,li2019non}, etc.

\subsection{Additional Definitions} \label{sec:app_def}
\begin{definition}[RIP, \citep{candes2009exact}]  \label{def:rip}
    Given a natural number $p$, the linear map $\mathcal{A}: \mathbb{R}^{n \times n} \mapsto \mathbb{R}^{m}$ is said to satisfy $\delta_{p}$-RIP if there is a constant $\delta_{p} \in [0,1)$ such that
    \[ (1-\delta_{p})\|M \|_F^2 \leq \| \mathcal{A} (M) \|^2 \leq (1 + \delta_{p}) \|M\|_F^2 \]
    holds for all matrices $M \in \mathbb{R}^{n \times n}$ satisfying $\rk(M) \leq p$.
\end{definition}

\begin{definition}[Symmetric Tensor]
	Similar to the definition of symmetric matrices, for an order-$l$ tensor $\mb{a}$ with the same dimensions (i.e., $n_1 = \dots = n_l$), also called a cubic tensor, it is said that the tensor is symmetric if  its entries are invariance under any permutation of their indices:
	\[
		a_{i_{\sigma(1)} \cdots i_{\sigma(l)}} = a_{i_1 \cdots i_l} \quad \forall \sigma, \quad i_1, \dots, i_l \in \{1,\dots,n\}
	\]
	where $\sigma \in \mathcal{G}_l$ denotes a specific permutation and $\mathcal{G}_l$ is the symmetric group of permutations on $\{1, \dots, l\}$. We denote the set of symmetric tensors as $\mathrm{S}^l(\RR^n)$.
\end{definition}

\begin{definition}[Rank of Tensors]
	The rank of a cubic tensor $\mb{a} \in \RR^{n \circ l}$ is defined as
	\[
		\rk(\mb{a}) = \min\{r | \mb{a} = \sum_{i=1}^r u_i \otimes v_i \otimes \cdots \otimes w_i \}
	\]
	for some vector $u_i, \dots, w_i \in \RR^n$. Furthermore, according to \cite{kolda2015numerical}, if $\mb{a}$ is a symmetric tensor, then it can be decomposed as:
	\[
		\mb{a} = \sum_{i=1}^r \lambda_i u_i \otimes \dots \otimes u_i \coloneqq \sum_{i=1}^r \lambda_i u_i^{\otimes l}
	\]
	and the rank is conveniently defined as the number of nonzero $\lambda_i$'s, which is very similar to the rank of symmetric matrices indeed. The most important concept in our paper is rank-1 tensors, and for any tensor $\mb{a}$, a necessary and sufficient condition for it to be rank-1 is that 
	\[
		\mb{a} = u^{\otimes l}
	\]
	for some $u \in \RR^n$.
	
\end{definition}

\begin{definition}[Tensor Multiplication]
	Outer product is an operation carried out on a pair  of tensors, denoted as $\otimes$. The outer product of 2 tensors $\mathbf{a}$ and $\mathbf{b}$, respectively of orders $l$ and $p$, is a tensor of order $l+p$, denoted as $\mb{c} = \mb{a} \otimes \mb{b}$ such that:
	\[
		c_{i_1\dots i_l j_1 \dots j_p} = a_{i_1\dots i_l} b_{j_1 \dots j_p}
	\]
	When the 2 tensors are of the same dimension, this product is such that $\otimes: \RR^{n \circ l} \times \RR^{n \circ p} \mapsto \RR^{n \circ (l+p)}$. Henceforth, we use the shorthand notation
	\[
		 \underbrace{a \otimes \dots \otimes a}_{l \ \text{times}}\coloneqq  a^{\otimes l}
	\]
	We also define an inner product of two tensors. The mode-$q$ inner product between the 2 aforementioned tensors having the same $q$-th dimension is denoted as $\langle \mb{a}, \mb{b} \rangle_q$. Without loss of generality, assume that $q = 1$ and 
	\[
		\left[ \langle \mb{a}, \mb{b} \rangle_q \right]_{i_2\dots i_l j_2 \dots j_p} = \sum_{\alpha=1}^{n_q} a_{\alpha i_2\dots i_l} b _{\alpha j_2 \dots j_p}
	\]
	Note that when we write $\langle \cdot, \cdot \rangle_q$, we count the $q$-th dimension of the first entry. Indeed, this definition of inner product can also be trivially extended to multi-mode inner products by just summing over all modes, denoted as $\langle \mb{a}, \mb{b} \rangle_{q, \dots, s}$.
\end{definition}

\subsection{Technical Lemmas}\label{sec:app_lemmas}
\begin{lemma}[Section 10.2 \cite{petersen2008matrix}]
For four arbitrary matrices $A,B,C,D$ of compatible dimensions, it holds that

\begin{equation}
	\langle A \otimes B, C\otimes D \rangle_{2,4} = AC \otimes BD
\end{equation}
	
\end{lemma}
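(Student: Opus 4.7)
The plan is to verify the identity by a direct, entrywise computation using the outer-product and inner-product definitions of Section~\ref{sec:def}. Fix compatible sizes $A \in \RR^{m \times n}$, $B \in \RR^{p \times q}$, $C \in \RR^{n \times s}$, $D \in \RR^{q \times t}$; both sides then produce four-way tensors on $[m] \times [s] \times [p] \times [t]$ once the appropriate identification of modes is made.

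First I would expand the two outer products as $(A \otimes B)_{ijkl} = A_{ij} B_{kl}$ and $(C \otimes D)_{abcd} = C_{ab} D_{cd}$. The subscript $\langle \cdot,\cdot \rangle_{2,4}$ specifies contraction of the second and fourth modes of $A \otimes B$ with the matching-size modes of $C \otimes D$ (namely dim $1$ and dim $3$, corresponding respectively to the first mode of $C$ and the first mode of $D$). This is the same pairing convention exercised in \eqref{eq:highr_vectorization} to identify $\langle \mb{P}^{\otimes l}, \vecc(X)^{\otimes l}\rangle_{3*[l]} = X^{\otimes l}$, so its use here is consistent with the rest of the paper.

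Substituting the expanded factors and summing over the paired indices $j$ (identified with $a$) and $l$ (identified with $c$) yields
\[
\bigl[\langle A \otimes B, C \otimes D \rangle_{2,4}\bigr]_{i, b, k, d}
= \sum_{j, l} A_{ij} B_{kl} C_{j b} D_{l d}
= \Bigl(\sum_{j} A_{ij} C_{jb}\Bigr)\Bigl(\sum_{l} B_{kl} D_{ld}\Bigr)
= (AC)_{ib}(BD)_{kd}.
\]
The outer-product definition gives $(AC \otimes BD)_{i b k d} = (AC)_{ib}(BD)_{kd}$, matching the expression above entrywise. The identity can thus be summarized as the statement that tensor contraction distributes across the outer product when the contracted modes are aligned factor-by-factor.

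The whole argument is a single chain of index manipulations; the only point requiring care is bookkeeping the pairing of contracted dimensions, but that is dictated by size-compatibility and already matches the convention fixed earlier in the paper. No substantive obstacle is anticipated.
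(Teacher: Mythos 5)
The paper does not actually prove this lemma; it is cited verbatim from Section~10.2 of the Matrix Cookbook \cite{petersen2008matrix}, which states the Kronecker-product form $(A \oslash B)(C \oslash D) = AC \oslash BD$, and the paper restates it in tensor-outer-product language without supplying an argument. Your direct entrywise computation is correct and is exactly the standard proof that the citation points to: expanding $(A\otimes B)_{ijkl} = A_{ij}B_{kl}$, $(C\otimes D)_{abcd}=C_{ab}D_{cd}$, contracting $j$ against $a$ and $l$ against $c$, and factoring the double sum gives $(AC)_{ib}(BD)_{kd}$. The one point worth being explicit about is mode ordering: under the paper's inner-product convention the surviving modes come out ordered as (uncontracted modes of the first tensor, then of the second), i.e.\ indexed $(i,k,b,d)$, whereas $AC\otimes BD$ is indexed $(i,b,k,d)$; the identity therefore holds up to a transposition of the middle two modes. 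You flag this as "the appropriate identification of modes," which is fine, and it matches how the lemma is actually invoked in \eqref{eq:highr_vectorization} and \eqref{eq:thm_fop_highr_help1}, but it would be cleaner to state the permutation explicitly so the reader does not read the displayed equality as holding mode-for-mode.
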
 \label{lem:kron_iden}

\begin{lemma}[\cite{ma2023noisy}]\label{lem:G_upper}
	For any SOP $\hat X$ of \eqref{eq:unlifted_main}, define $G$ as $G \coloneqq - \lambda_{\text{min}}(\nabla f(\hat X \hat X^\top))$, and $L_s$ be the RSS constant. Then it holds that
	\[
		G \leq \lambda_r(\hat X \hat X^\top) L_s
	\]
	where $r$ is the search rank of \eqref{eq:unlifted_main}.
\end{lemma}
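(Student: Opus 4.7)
The plan is to combine the first-order condition \eqref{eq:focp_unlifted} and the second-order condition \eqref{eq:socp_unlifted} from Lemma~\ref{lem:cp_unlifted}, specialized to a carefully chosen rank-one test direction, and to exploit symmetry of $\nabla f(\hat X \hat X^\top)$ to extract the factor $\lambda_r(\hat X \hat X^\top)$. Throughout, assume $G > 0$, since otherwise the inequality is vacuous (both $L_s$ and $\lambda_r(\hat X \hat X^\top)$ are nonnegative).

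First I would observe that, because every sensing matrix $A_i$ is symmetric, $\nabla f(\hat X \hat X^\top) = \mathcal{A}^*(\mathcal{A}(\hat X \hat X^\top) - b)$ is a symmetric matrix in $\RR^{n \times n}$. Let $v \in \RR^n$ be a unit eigenvector associated with $\lambda_{\min}(\nabla f(\hat X \hat X^\top)) = -G$. The FOCP \eqref{eq:focp_unlifted} says $\nabla f(\hat X \hat X^\top)\hat X = 0$, so $\mathrm{col}(\hat X)$ sits inside the $0$-eigenspace of this symmetric operator. Eigenspaces for distinct eigenvalues are orthogonal, hence $v \perp \mathrm{col}(\hat X)$; this orthogonality will be the crucial ingredient enabling the tight bound.

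Next, for an arbitrary $a \in \RR^r$ to be chosen later, I would substitute $U = v a^\top \in \RR^{n \times r}$ into the SOCP \eqref{eq:socp_unlifted}. Then $UU^\top = \|a\|^2 vv^\top$, giving $\langle \nabla f(\hat X\hat X^\top), UU^\top\rangle = -G\|a\|^2$. Letting $u := \hat X a$, the coupling matrix simplifies to $\hat X U^\top + U \hat X^\top = u v^\top + v u^\top$, which has rank at most $2$, so the RSS property yields
\[
[\nabla^2 f(\hat X \hat X^\top)](uv^\top + vu^\top, uv^\top + vu^\top) \leq L_s \|uv^\top + vu^\top\|_F^2.
\]
Since $v \perp u$ by the previous step, expanding gives $\|uv^\top + vu^\top\|_F^2 = 2\|u\|^2 + 2(u^\top v)^2 = 2\|u\|^2 = 2\,a^\top \hat X^\top \hat X a$. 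Plugging back into \eqref{eq:socp_unlifted} produces
\[
-2G\|a\|^2 + 2 L_s\, a^\top \hat X^\top \hat X a \geq 0 \qquad \forall a \in \RR^r.
\]

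Finally, I would choose $a$ to be a unit eigenvector of $\hat X^\top \hat X \in \RR^{r\times r}$ corresponding to its smallest eigenvalue $\lambda_r(\hat X^\top \hat X) = \lambda_r(\hat X \hat X^\top)$, which immediately gives $G \leq L_s \lambda_r(\hat X \hat X^\top)$ and closes the argument. The degenerate case where $\hat X$ is column-rank-deficient is handled automatically: picking $a \in \ker(\hat X)$ makes $u = 0$ and forces $G \leq 0$, matching the right-hand side of $0$. The principal obstacle is the orthogonality step $v \perp \mathrm{col}(\hat X)$; without it the cross term $(u^\top v)^2$ would survive, doubling the right-hand side and ruining the sharp constant. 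This is why the FOCP and the symmetry of $\nabla f(\hat X \hat X^\top)$ — itself relying on the symmetric-$A_i$ assumption discussed after \eqref{eq:SDP main} — are both essential.
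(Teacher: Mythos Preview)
Your argument is correct. The paper does not supply its own proof of Lemma~\ref{lem:G_upper} (it is quoted from \cite{ma2023noisy}), but the exact computation you carry out appears inside the proof of Theorem~\ref{thm:socp_highr}: there the authors pick the same eigenvector $u$ of $\nabla f(\hat X\hat X^\top)$, choose $q$ to be the $r$-th right singular vector of $\hat X$, set $U=uq^\top$, and use $\hat X^\top u=0$ (from \eqref{eq:focp_unlifted}) to drop the cross term, obtaining $C_2 \le \tfrac{1}{2}L_s\lambda_r(\hat X\hat X^\top)$. Your proposal is the same construction with $a$ left generic and then specialized to the minimal eigenvector of $\hat X^\top\hat X$, so the approaches coincide.
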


	\begin{lemma}\label{lem:xhat_upper}
		Given an FOP $\hat X$ of \eqref{eq:unlifted_main}, it holds that
		\begin{equation}\label{eq:xhat_upper}
			\lambda_r(\hat X \hat X^\top) < \sqrt{\frac{2 L_s}{r \alpha_s}} \|M^*\|_F
		\end{equation}
\end{lemma}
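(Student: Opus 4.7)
The plan is to bound $\|\hat X \hat X^\top\|_F^2$ from above using a sandwich argument on $f(0)$, and then exploit the rank constraint to pass from a Frobenius bound to a bound on $\lambda_r$. Let me write $\hat M := \hat X \hat X^\top$, which has rank at most $r$.

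First, I would use the FOP condition \eqref{eq:focp_unlifted}: taking the inner product of $\nabla f(\hat M)\hat X = 0$ with $\hat X$ yields $\langle \nabla f(\hat M), \hat M\rangle = \langle \nabla f(\hat M)\hat X, \hat X\rangle = 0$. This cancellation is the mechanism that makes the rest of the argument clean. Next, I would apply the $(\alpha_s, r)$-RSC inequality with $M = 0$ and $N = \hat M$ (both rank $\leq r$), which gives
\[
f(0) - f(\hat M) \;\geq\; \langle -\hat M, \nabla f(\hat M)\rangle + \frac{\alpha_s}{2}\|\hat M\|_F^2 \;=\; \frac{\alpha_s}{2}\|\hat M\|_F^2 ,
\]
and since $f \geq 0$, this yields $f(0) \geq \tfrac{\alpha_s}{2}\|\hat M\|_F^2$.

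For the complementary upper bound on $f(0)$, I would apply the $(L_s, r)$-RSS inequality with $M = 0$ and $N = M^*$. Because $M^*$ is a global minimizer of $f$ with $f(M^*) = 0$ and $\nabla f(M^*) = 0$, the linear term vanishes and we get $f(0) \leq \tfrac{L_s}{2}\|M^*\|_F^2$. Chaining the two inequalities produces $\|\hat M\|_F^2 \leq (L_s/\alpha_s)\|M^*\|_F^2$. Finally, since $\rank(\hat M) \leq r$, the Frobenius norm decomposes as $\|\hat M\|_F^2 = \sum_{i=1}^r \lambda_i(\hat M)^2 \geq r\,\lambda_r(\hat M)^2$, which rearranges to $\lambda_r(\hat M) \leq \sqrt{L_s/(r\alpha_s)}\,\|M^*\|_F$. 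Since $L_s \geq \alpha_s > 0$, this is strictly smaller than $\sqrt{2L_s/(r\alpha_s)}\,\|M^*\|_F$, establishing \eqref{eq:xhat_upper}.

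There is no serious obstacle here; the argument is a two-line sandwich enabled by the FOP identity $\langle \nabla f(\hat M), \hat M\rangle = 0$. The only point requiring care is verifying the rank hypothesis needed to invoke RSC/RSS, but in both applications the two matrices compared are of rank at most $r$ (either the zero matrix versus $\hat M$, or the zero matrix versus $M^*$), so the stated RSC/RSS constants apply directly. The factor of $\sqrt{2}$ in the claimed bound is thus slack, and the proof in fact delivers the tighter constant $1$.
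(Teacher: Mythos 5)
Your proof is correct and takes a genuinely different route from the paper's. The paper invokes Lemma 6 of the cited reference (a gradient-norm lower bound of the form ``$\|\hat X\hat X^\top\|_F^2$ large $\Rightarrow$ $\|\nabla h(\hat X)\|_F$ large'') and applies it in contrapositive form at an FOP, while your argument is self-contained: it sandwiches $f(0)$ between the RSC lower bound $\tfrac{\alpha_s}{2}\|\hat X\hat X^\top\|_F^2$ (using the FOP identity $\langle \nabla f(\hat M), \hat M\rangle = 0$ to kill the linear term) and the RSS upper bound $\tfrac{L_s}{2}\|M^*\|_F^2$ (using $f(M^*)=0$ and $\nabla f(M^*)=0$), then converts to a $\lambda_r$ bound by the rank-$r$ Frobenius decomposition. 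Both proofs need the same final step $\|\hat M\|_F^2 \geq r\,\lambda_r(\hat M)^2$, and both tacitly assume $M^*\neq 0$. What your approach buys: it avoids the external lemma, the rank hypotheses for RSC/RSS are checked cleanly (each pair compared is $(0,\hat M)$ or $(0,M^*)$, both of rank $\leq r$, so the difference also has rank $\leq r$ and there is no $2r$ slippage), and it delivers the strictly tighter constant $\sqrt{L_s/(r\alpha_s)}$ in place of $\sqrt{2L_s/(r\alpha_s)}$, so the stated inequality follows with room to spare.
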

\begin{proof}[Proof of Lemma \ref{lem:xhat_upper}]
	Lemma 6 of \cite{zhang2021general} states that given an arbitrary constant $\lambda$ and matrix $X \in \RR^{n \times r}$, one can write
	\[
		\|X X^\top\|^2_F \geq \max \left \{\frac{2L_s}{\alpha_s} \|M^*\|^2_F, (\frac{2\lambda \sqrt{r}}{\alpha_s})^{4/3} \right \} \implies \|\nabla h(X)\|_F \geq \lambda
	\]
	A simple negation to both sides gives
	\[
		\|\nabla h(X)\|_F < \lambda \implies \|X X^\top\|^2_F < \max\{\frac{2L_s}{\alpha_s} \|M^*\|^2_F, (\frac{2\lambda \sqrt{r}}{\alpha_s})^{4/3}\}
	\]
	If we set $X = \hat X$, then left-hand side of the above inequality is automatically satisfied for small values of $\lambda$ since $\|\nabla h(\hat X)\|_F = 0$, and thus we conclude that
	\[
		\|\hat X \hat X^\top\|^2_F < \frac{2L_s}{\alpha_s} \|M^*\|^2_F
	\]
	since $(\frac{2\lambda \sqrt{r}}{\alpha_s})^{4/3}$ can be made arbitrarily small. Therefore,
	\[
		\|\hat X \hat X^\top\|^2_F \geq r \lambda_r(\hat X \hat X^\top)^2 \implies \lambda_r(\hat X \hat X^\top) < \sqrt{\frac{2 L_s}{r \alpha_s}} \|M^*\|_F
	\]
	as $\hat X \hat X^\top$ can have at most $r$ eigenvalues due to its factorized form.
\end{proof}

\newpage
\section{Additional Details for Lifted Formulation of General $r$}\label{sec:app_highr}
We analyze \eqref{eq:lifted_main_highr} and generalize the results of \cite{ma2023over} to $r>1$. We start with the characterization of FOPs and SOPs of \eqref{eq:lifted_main_highr}.
\begin{lemma}\label{lem:cp_lifted_highr}
	The tensor $ \mb{\hat w} \in \RR^{nr \circ l}$ is an SOP of \eqref{eq:lifted_main_highr} if and only if
	\begin{subequations}
		\begin{gather}
			\langle \nabla f^l( \langle \mb{P}(\mb{\hat w}), \mb{P}(\mb{\hat w}) \rangle_{2*[l]}) , \mb{P}(\mb{\hat w})\rangle_{2*[l]} = 0, \label{eq:focp_lifted_highr}\\
		\begin{split}
			&2 \langle \nabla f^l(\langle \mb{P}(\mb{\hat w}), \mb{P}(\mb{\hat w}) \rangle_{2*[l]}), \langle \mb{P}(\Delta), \mb{P}(\Delta) \rangle_{2*[l]} + \\
			\| \langle \mb{A}^{\otimes l} , \langle \mb{P}(\mb{\hat w}),& \mb{P}(\Delta) \rangle_{2*[l]} + \langle \mb{P}(\Delta), \mb{P}(\mb{\hat w}) \rangle_{2*[l]} \rangle \|_F^2 \geq 0 \quad \forall \Delta \in \RR^{nr \circ l}
		\end{split}
		\label{eq:socp_lifted_highr}
		\end{gather}
	\end{subequations}
	with \eqref{eq:socp_lifted_highr} being a necessary and sufficient condition for $\mb{\hat w}$ to be a FOP.
\end{lemma}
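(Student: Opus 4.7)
The plan is to derive both conditions directly from the definition by computing the first and second directional derivatives of the objective $h^l(\mb{w}) := f^l(\langle \mb{P}(\mb{w}), \mb{P}(\mb{w}) \rangle_{2*[l]})$. Since \eqref{eq:lifted_main_highr} is unconstrained and smooth, $\mb{\hat w}$ is an FOP iff $\nabla h^l(\mb{\hat w}) = 0$, and an SOP iff additionally the Hessian quadratic form is non-negative in every direction $\Delta \in \RR^{nr\circ l}$. The key ingredient is that $\mb{P}(\cdot)$ is a linear operator on its argument (from \eqref{eq:highr_vectorization}), so the inner map $\mb{w} \mapsto \langle \mb{P}(\mb{w}), \mb{P}(\mb{w})\rangle_{2*[l]}$ is exactly quadratic, with no higher-order terms appearing in a Taylor expansion.

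First I would perturb $\mb{w} \mapsto \mb{\hat w} + t\Delta$ and expand
\begin{equation*}
\langle \mb{P}(\mb{\hat w}+t\Delta), \mb{P}(\mb{\hat w}+t\Delta)\rangle_{2*[l]} = \langle \mb{P}(\mb{\hat w}), \mb{P}(\mb{\hat w})\rangle_{2*[l]} + t\,\mb{L}(\Delta) + t^2 \langle \mb{P}(\Delta), \mb{P}(\Delta)\rangle_{2*[l]},
\end{equation*}
where $\mb{L}(\Delta) := \langle \mb{P}(\mb{\hat w}), \mb{P}(\Delta)\rangle_{2*[l]} + \langle \mb{P}(\Delta), \mb{P}(\mb{\hat w})\rangle_{2*[l]}$. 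Applying the chain rule to $h^l$, the first-order directional derivative equals $\langle \nabla f^l(\langle \mb{P}(\mb{\hat w}), \mb{P}(\mb{\hat w})\rangle_{2*[l]}), \mb{L}(\Delta)\rangle$. Using the symmetry of $\nabla f^l$ under swapping its two halves of indices (which follows from the symmetry of each sensing matrix $A_i$), the two summands of $\mb{L}(\Delta)$ contribute equally, giving $2\langle \nabla f^l, \langle \mb{P}(\mb{\hat w}), \mb{P}(\Delta)\rangle_{2*[l]}\rangle$. A routine index-shuffling then recasts this as a full contraction of $\langle \nabla f^l, \mb{P}(\mb{\hat w})\rangle_{2*[l]}$ against $\mb{P}(\Delta)$; requiring the result to vanish for every $\Delta$ and peeling off the outer $\mb{P}$ (which is valid because $\mb{P}$ is surjective onto its target by construction) yields exactly \eqref{eq:focp_lifted_highr}.

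Second, for the SOP the Hessian quadratic form splits into two pieces. The derivative $\nabla f^l$ contracted with the genuine second-order term $\langle \mb{P}(\Delta), \mb{P}(\Delta)\rangle_{2*[l]}$ produces the first inner product in \eqref{eq:socp_lifted_highr}. The remaining piece comes from the Hessian of $f^l$ itself: because $f^l(\mb{M}) = \|\langle \mb{A}^{\otimes l}, \mb{M}\rangle - b^{\otimes l}\|_F^2$ is quadratic in $\mb{M}$, its Hessian quadratic form evaluated at the first-order perturbation $\mb{L}(\Delta)$ equals exactly $\|\langle \mb{A}^{\otimes l}, \mb{L}(\Delta)\rangle\|_F^2$, giving the second term of \eqref{eq:socp_lifted_highr}. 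Imposing non-negativity of the sum for every $\Delta$ then delivers the stated second-order condition.

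The main obstacle is not conceptual but bookkeeping the tensor-contraction indices: one has to verify that the partial inner products $\langle \cdot, \cdot\rangle_{2*[l]}$ behave correctly under the chain rule, that the factor of $2$ from symmetrizing $\mb{L}(\Delta)$ composes consistently with the derivative of $f^l$, and that the ``peeling off $\mb{P}$'' step in the FOP derivation is justified (reducing to the defining identity $\langle \mb{P}, \vecc(X)\rangle_3 = X$ for $\mb{P}$). Modulo this careful index tracking, the argument is a direct tensor analogue of the standard FOP/SOP computation for Burer--Monteiro factorization recorded in Lemma~\ref{lem:cp_unlifted}, with $\mb{P}(\cdot)$ playing the role of the reshaping/identity map.
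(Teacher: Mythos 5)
Your proof follows essentially the same route as the paper's: both compute the first and second derivatives of the composition $h^l = f^l \circ \mathcal{M}$, where $\mathcal{M}(\mb{w}) = \langle \mb{P}(\mb{w}), \mb{P}(\mb{w})\rangle_{2*[l]}$ is the quadratic inner map, and both use the symmetry of the sensing matrices to fold $\mb{L}(\Delta)$'s two summands into a factor of two. Your Taylor-expansion framing and the paper's chain-rule/total-derivative framing are two presentations of the same calculation; the ``peel off $\mb{P}$'' step is the minor bookkeeping observation (also implicit in the paper's ``after rearrangements'') that $\mb{P}$ is a reshaping bijection, so $\mb{P}(\Delta)$ sweeps the whole target space as $\Delta$ does.
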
	
\begin{proof}[Proof of Lemma~\ref{lem:cp_lifted_highr}]
	We have
	\begin{equation}\label{eq:lem_cp_highr_help1}
		\nabla f^l( \mb{M}) =  \langle \langle \mb{A}^{\otimes l}, \mb{M} - \mathcal{M}(\vecc(Z)^{\otimes l})\rangle, \mb{A}^{\otimes l} \rangle_{1,4,\dots,3l-2}
	\end{equation}
	where the new map $\mathcal{M}: \RR^{nr \circ l} \mapsto \RR^{n \circ 2l}$ is defined as
	\begin{equation*}
		\mathcal{M}(\mb{w}) = \langle \mb{P}(\mb{w}), \mb{P}(\mb{w}) \rangle_{2*[l]},
	\end{equation*}
	and its total derivative at $\mb{w}$ is the linear map $D_{\mb{w}} \mathcal{M}: \RR^{nr \circ l} \mapsto \RR^{n \circ 2l}$ given below:
	\begin{equation}\label{eq:lem_cp_highr_help2}
		D_{\mb{w}} \mathcal{M}(\mb{v}) = \langle \mb{P}(\mb{v}), \mb{P}(\mb{w}) \rangle_{2*[l]} + \langle \mb{P}(\mb{w}), \mb{P}(\mb{v}) \rangle_{2*[l]}.
	\end{equation}
	Combining \eqref{eq:lem_cp_highr_help1} and \eqref{eq:lem_cp_highr_help2} gives that
	\begin{equation}
		D_{\mb{w}} h^l(\mb{v}) = \langle \mb{A}^{\otimes l}, D_{\mb{w}} \mathcal{M}(\mb{v}) \rangle^\top \langle \mb{A}^{\otimes l}, \mathcal{M}(\mb{w}) - \mathcal{M}(\vecc(Z)^{\otimes l})\rangle
	\end{equation}
	The sensing matrices $A_k \ \forall k \in [m]$ are assumed to be symmetric, and therefore $\langle \mb{A}^{\otimes l}, D_{\mb{w}} \mathcal{M}(\mb{v}) \rangle = 2 \langle \mb{A}^{\otimes l}, \langle \mb{P}(\mb{v}), \mb{P}(\mb{w}) \rangle_{2*[l]} \rangle$. 
	
	Therefore, since the first-order optimality condition for \eqref{eq:lifted_main_highr} is that $D_{\mb{w}} h^l(\mb{v}) = 0 \ \forall \mb{v} \in \RR^{nr \circ l}$, it can be equivalently written as 
	\begin{equation}\label{eq:highr_grad_exp}
		\langle \langle \mb{A}^{\otimes l}, \mb{P}(\mb{w})\rangle_{2*[l]}, \langle \mb{A}^{\otimes l}, \mathcal{M}(\mb{w}) - \mathcal{M}(\vecc(Z)^{\otimes l})\rangle \rangle_{1,3,\dots,2l-1} = 0,
	\end{equation}
	and left-hand side of the above equation yields \eqref{eq:focp_lifted_highr} after rearrangements.
	
	For the second-order optimality condition, one can directly take the derivative of $D_{\mb{w}} h^l(\mb{v})$, but there is an easier way since we are only concerned the expression of its quadratic form evaluated at some tensor $\Delta \in \RR^{nr \circ l}$. For a brief moment, assume that we aim to optimize over $\mb{X} \in \RR^{[n \times r] \circ l}$, for which
	\[
		\nabla h^l(\mb{X})  = 2 \langle \nabla f^l( \langle \mb{X}, \mb{X} \rangle_{2*[l]}) , \mb{X}\rangle_{2*[l]} \in \RR^{[n \times r] \circ l}
	\]
	Therefore, if we instead take the derivate of $g(\mb{P}(\mb{w}))$ with respect to $\mb{w}$,  we can simply use the chain rule and arrive at
	\begin{equation}\label{eq:lem_cp_highr_help3}
		\nabla_{\mb{w}} h^l(\mb{P}(\mb{w})) = \langle \nabla h^l(\mb{X}), \mb{P}^{\otimes l} \rangle_{1,2,4,5,\dots,3l-1,3l}
	\end{equation}
	Hence, if we take the derivate of $\nabla h^l$ and evaluate it at $\mb{X}$ in the direction of $\mb{U} \in \RR^{[n \times r] \circ l}$, we obtain that 
	\begin{align*}
		D_{\mb{X}} \nabla h^l(\mb{U}) &= 2\langle \nabla f^l( \langle \mb{X}, \mb{X} \rangle_{2*[l]}) ,\mb{U}\rangle_{2*[l]} + \langle \langle \mb{A}^{\otimes l}, \langle \mb{X}, \mb{U} \rangle_{2*[l]} + \langle \mb{U}, \mb{X} \rangle_{2*[l]} \rangle, \langle \mb{A}^{\otimes l}, \mb{w} \rangle_{2,5,\dots,3l-1} \rangle \\
		& + \langle \langle \mb{A}^{\otimes l}, \langle \mb{X}, \mb{U} \rangle_{2*[l]} + \langle \mb{U}, \mb{X} \rangle_{2*[l]} \rangle, \langle \mb{A}^{\otimes l}, \mb{w}  \rangle_{3,6,\dots,3l} \rangle
	\end{align*}
	Combined with \eqref{eq:lem_cp_highr_help3}, we conclude that
	\begin{align*}
		[\nabla^2_{\mb{w}} h^l(\mb{P}(\mb{w}))](\mb{v},\mb{v}) &= 2 \langle \nabla f^l( \mathcal{M}(\mb{w})) ,\mathcal{M}(\mb{v}) \rangle + \langle \langle \mb{A}^{\otimes l}, D_{\mb{w}} \mathcal{M}(\mb{v})\rangle, \langle \mb{A}^{\otimes l}, D_{\mb{w}} \mathcal{M}(\mb{v})   \rangle \rangle \\
		&= 2 \langle \nabla f^l(\mathcal{M}(\mb{w})) ,\mathcal{M}(\mb{v}) \rangle + \|\langle \mb{A}^{\otimes l}, D_{\mb{w}} \mathcal{M}(\mb{v})   \rangle\|^2_F
	\end{align*}
	which yields \eqref{eq:socp_lifted_highr} directly.
\end{proof}

Now, we turn to showcasing the relationship between the FOPs of \eqref{eq:lifted_main_highr} and those of \eqref{eq:unlifted_main}, which also have a one-to-one correspondence in the symmetric rank-1 regime. This is the reason why it is necessary to introduce \eqref{eq:lifted_main_highr} despite the extra complication, as rank-1 components tensors in $\RR^{[n \times r] \circ l}$ are not lifted versions of $X \in \RR^{n \times r}$.

\begin{theorem}\label{thm:focp_highr}
	For the lifted formulation \eqref{eq:lifted_main_highr}, the first-order condition $\nabla h^l(\mb{\hat w}) = 0$ holds for a symmetric rank-1 tensor $\mb{\hat w}$ if and only if
	\[
		\mb{\hat w} = \vecc(\hat X)^{\otimes l}
	\] 
	where $\hat X \in \RR^{n \times r}$ is an FOP of \eqref{eq:unlifted_main}.
\end{theorem}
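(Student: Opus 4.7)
The plan is to reduce the tensor FOP condition \eqref{eq:focp_lifted_highr} to the matrix FOP condition \eqref{eq:focp_unlifted} by fully exploiting the rank-1 symmetric structure of $\mb{\hat w}$. Since $\mb{\hat w}$ is symmetric and rank-1, I would first write $\mb{\hat w} = \vecc(\hat X)^{\otimes l}$ for some $\hat X \in \RR^{n \times r}$, absorbing sign and magnitude into $\hat X$ (unambiguous for odd $l$, the regime used throughout the paper). By the vectorization identity \eqref{eq:highr_vectorization}, this immediately gives $\mb{P}(\mb{\hat w}) = \hat X^{\otimes l}$. Then an iterated application of Lemma~\ref{lem:kron_iden} over the $l$ contracted modes yields $\mathcal{M}(\mb{\hat w}) = \langle \hat X^{\otimes l}, \hat X^{\otimes l}\rangle_{2*[l]} = (\hat X \hat X^\top)^{\otimes l}$.

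The second step is to compute $\nabla f^l((\hat X \hat X^\top)^{\otimes l})$ using the expression $\nabla f^l(\mb{M}) = \langle \langle \mb{A}^{\otimes l}, \mb{M} - (ZZ^\top)^{\otimes l}\rangle, \mb{A}^{\otimes l}\rangle_{1,4,\dots,3l-2}$ derived inside the proof of Lemma~\ref{lem:cp_lifted_highr}. Because both $\mb{A}^{\otimes l}$ and $(\hat X \hat X^\top)^{\otimes l}$ are outer powers whose entries factorize across the $l$ slots, the inner contraction decouples into a product of scalar inner products and produces $\mathcal{A}(\hat X \hat X^\top)^{\otimes l} - b^{\otimes l}$; the outer contraction with $\mb{A}^{\otimes l}$ again factorizes slot by slot, yielding
\[
\nabla f^l((\hat X \hat X^\top)^{\otimes l}) = \bigl[\mathcal{A}^*\mathcal{A}(\hat X \hat X^\top)\bigr]^{\otimes l} - \bigl[\mathcal{A}^* b\bigr]^{\otimes l}.
\]
Substituting this together with $\mb{P}(\mb{\hat w}) = \hat X^{\otimes l}$ into the FOP equation \eqref{eq:focp_lifted_highr} and invoking Lemma~\ref{lem:kron_iden} once more collapses the condition to the clean tensor equality
\[
\bigl[\mathcal{A}^*\mathcal{A}(\hat X \hat X^\top)\,\hat X\bigr]^{\otimes l} = \bigl[\mathcal{A}^* b\cdot\hat X\bigr]^{\otimes l}.
\]

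Finally, I would argue that equality of two pure outer $l$-th powers of matrices $A^{\otimes l} = C^{\otimes l}$ with $A, C \in \RR^{n \times r}$ forces $A = C$ when $l$ is odd: specializing the defining identity $\prod_s A_{i_s q_s} = \prod_s C_{i_s q_s}$ to all slots being the same pair $(i, q)$ gives $A_{iq}^l = C_{iq}^l$, and taking real $l$-th roots entrywise for odd $l$ yields $A = C$. Applying this to the displayed equality produces $\mathcal{A}^*\mathcal{A}(\hat X \hat X^\top)\hat X = \mathcal{A}^* b\cdot\hat X$, which is exactly $\nabla f(\hat X \hat X^\top)\hat X = 0$, i.e., the unlifted FOP condition \eqref{eq:focp_unlifted}. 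The converse direction follows immediately by reversing the chain of equalities: $\nabla f(\hat X \hat X^\top)\hat X = 0$ makes the two outer-power tensors coincide, so the lifted FOP condition \eqref{eq:focp_lifted_highr} is satisfied. The principal technical hurdle is justifying the outer-power contraction identity $\langle M^{\otimes l}, X^{\otimes l}\rangle_{2*[l]} = (MX)^{\otimes l}$ via induction on $l$ with Lemma~\ref{lem:kron_iden} as the base case, while keeping the multi-mode index bookkeeping consistent with the definition of $\mb{P}$ and its contractions against the $(n, r, nr)$-shaped factors.
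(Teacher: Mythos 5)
Your proposal is correct and follows essentially the same route as the paper's proof: write $\mb{\hat w}=\vecc(\hat X)^{\otimes l}$, push $\mb{P}$ and $\mathcal{M}$ through via Lemma~\ref{lem:kron_iden} to get $(\hat X\hat X^\top)^{\otimes l}$, and collapse the lifted first-order condition to the unlifted one. In fact, your version is \emph{more careful} than the paper's. The paper's display \eqref{eq:thm_fop_highr_help1} asserts the identity
\[
\langle \nabla f^l(\langle \hat X^{\otimes l},\hat X^{\otimes l}\rangle_{2*[l]}),\hat X^{\otimes l}\rangle_{2*[l]}=(\nabla f(\hat X\hat X^\top)\hat X)^{\otimes l},
\]
but, as you correctly compute, the left-hand side factorizes slot-by-slot into the \emph{difference}
\[
\bigl[\mathcal{A}^*\mathcal{A}(\hat X\hat X^\top)\,\hat X\bigr]^{\otimes l}-\bigl[\mathcal{A}^*b\,\hat X\bigr]^{\otimes l},
\]
which is not a single outer power (the outer power is not linear). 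The two expressions are not equal as tensors; they are merely simultaneously zero. Your extra step — that equality of two real outer $l$-th powers forces equality of the underlying matrices entrywise via $A_{iq}^l=C_{iq}^l$ and odd $l$-th roots — is exactly the missing justification, and it also correctly surfaces the fact that the odd-$l$ hypothesis (used throughout the paper, e.g.\ Theorem~\ref{thm:socp_highr}) is genuinely load-bearing here: for even $l$ the vanishing would only give $\mathcal{A}^*\mathcal{A}(\hat X\hat X^\top)\hat X=\pm\mathcal{A}^*b\,\hat X$, leaving a spurious sign case. The converse direction is handled correctly by reversing the chain.
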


\begin{proof}[Proof of Theorem \ref{thm:focp_highr}]
When $\mb{\hat w} = \vecc(\hat X)^{\otimes l}$, Lemma~\ref{lem:kron_iden} and \eqref{eq:focp_lifted_highr} together imply that
\begin{equation}\label{eq:thm_fop_highr_help1}
	\langle \nabla f^l( \langle \hat X^{\otimes l}, \hat X^{\otimes l} \rangle_{2*[l]}) , \hat X^{\otimes l} \rangle_{2*[l]} = (\nabla f(\hat X \hat X^\top) \hat X)^{\otimes l} = 0
\end{equation}
which is equivalent to 
\[
	\nabla f(\hat X \hat X^\top) \hat X = 0,
\]
which is exactly \eqref{eq:focp_unlifted}.
\end{proof}

Theorem~\ref{thm:focp_highr} establishes a robust connection between the first-order critical points of the lifted formulation and those of the unlifted formulation. This implies that when first-order methods approach a critical point in \eqref{eq:lifted_main_highr}, valuable information about an FOP of \eqref{eq:unlifted_main} can also be readily extracted. However, the primary challenge in optimizing \eqref{eq:unlifted_main} stems from spurious solutions, which cannot be escaped by first or even second-order algorithms. Consequently, it becomes crucial to examine whether the Hessians of the FOPs of \eqref{eq:lifted_main_highr}, especially those that correspond to the spurious solutions of \eqref{eq:unlifted_main}, exhibit any unique properties. As it turns out, the non-global FOPs of \eqref{eq:lifted_main_highr} display some highly favorable characteristics: they no longer constitute second-order critical points of \eqref{eq:lifted_main_highr} and are transformed into strict saddles when the parametrization level $l$ is sufficiently large.

To motivate our analysis of conversion from spurious solutions to strict saddle points, we first offer a closer analysis to the SOPs of the unlifted problem \eqref{eq:unlifted_main}, which also serves as the key intuition into our main results in this section.

The main observation is that, for a spurious SOP $\hat X$ and any ground truth $Z$ with $\hat X \hat X^\top \neq ZZ^\top$, although they all obey conditions \eqref{eq:focp_unlifted} and \eqref{eq:socp_unlifted}, they still have intrinsic differences that can be amplified via over-parametrization. To illustrate this phenomenon in more detail, we will introduce the following Lemma:
\begin{lemma}\label{lem:smallest_eig_socp}
	For an arbitrary FOP $\hat X \in \RR^{n \times r}$ of \eqref{eq:unlifted_main} satisfying the $(\alpha_s,r)$-RSC property, the following inequality holds:
	\begin{equation}
		\lambda_{\min} (\nabla f(\hat X \hat X^\top )) \leq -\alpha_s \frac{\|\hat X \hat X^\top -M^*\|^2_F}{2\tr(M^*)} \leq 0
	\end{equation}
\end{lemma}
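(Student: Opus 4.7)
The plan is to exploit the first-order stationarity of $\hat X$ together with the $(\alpha_s, r)$-RSC inequality applied between the two rank-$r$ points $\hat X \hat X^\top$ and $M^*$, and then convert the resulting bound on $\langle \nabla f(\hat X \hat X^\top), M^*\rangle$ into a bound on $\lambda_{\min}$ by using the PSD decomposition of $M^*$.

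First, I would observe that the FOP condition \eqref{eq:focp_unlifted}, namely $\nabla f(\hat X \hat X^\top)\,\hat X = 0$, immediately yields $\langle \nabla f(\hat X \hat X^\top), \hat X \hat X^\top\rangle = \tr\bigl((\nabla f(\hat X \hat X^\top)\hat X)\hat X^\top\bigr) = 0$. This is the key algebraic simplification that removes one of the two terms in the inner product that will appear next.

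Next, I would invoke $(\alpha_s, r)$-RSC with $M = M^*$ and $N = \hat X \hat X^\top$; both have rank at most $r$, so the inequality applies. Since $f(M^*) = 0$ by definition and $f(\hat X \hat X^\top) \geq 0$, the RSC inequality rearranges to
\[
\langle \nabla f(\hat X \hat X^\top),\, M^* - \hat X \hat X^\top\rangle \;\leq\; -f(\hat X \hat X^\top) - \frac{\alpha_s}{2}\|M^* - \hat X \hat X^\top\|_F^2 \;\leq\; -\frac{\alpha_s}{2}\|\hat X \hat X^\top - M^*\|_F^2.
\]
Combined with the previous step, the left-hand side collapses to $\langle \nabla f(\hat X \hat X^\top), M^*\rangle$.

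Finally, since $M^* \succeq 0$, I would write its spectral decomposition $M^* = \sum_i \mu_i u_i u_i^\top$ with $\mu_i \geq 0$ and use
\[
\langle \nabla f(\hat X \hat X^\top), M^*\rangle = \sum_i \mu_i\, u_i^\top \nabla f(\hat X \hat X^\top) u_i \;\geq\; \lambda_{\min}\bigl(\nabla f(\hat X \hat X^\top)\bigr) \sum_i \mu_i \;=\; \lambda_{\min}\bigl(\nabla f(\hat X \hat X^\top)\bigr)\,\tr(M^*),
\]
and then dividing by $\tr(M^*) > 0$ yields the claimed bound. The right-most inequality $\leq 0$ is automatic since $\alpha_s > 0$ and the Frobenius norm is non-negative. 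No step here appears to be a real obstacle; the only subtlety is recognizing that the FOP condition kills the $\langle \nabla f, \hat X \hat X^\top\rangle$ term so that the RSC inequality leaves a clean bound on $\langle \nabla f, M^*\rangle$, which is exactly what one needs to extract $\lambda_{\min}$ via positive semidefiniteness of $M^*$.
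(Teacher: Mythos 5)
Your proof is correct and follows essentially the same route as the paper: use the FOP condition to eliminate $\langle \nabla f(\hat X\hat X^\top), \hat X\hat X^\top\rangle$, apply RSC between $M^*$ and $\hat X\hat X^\top$ together with $f(\hat X\hat X^\top)\geq f(M^*)$ to bound $\langle \nabla f(\hat X\hat X^\top), M^*\rangle$, then extract $\lambda_{\min}$ via positive semidefiniteness of $M^*$. If anything your final step is slightly cleaner, since you correctly rely only on $M^*\succeq 0$, whereas the paper's proof unnecessarily (and somewhat inaccurately) invokes positive semidefiniteness of $\nabla f(\hat X\hat X^\top)$ as well.
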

\begin{proof}[Proof for Lemma~\ref{lem:smallest_eig_socp}]

According to \cite{zhang2021general}, $\nabla f(M)$ can be assumed to be symmetric without loss of generality. Hence, one can select $u \in \RR^n$ such that $u^\top \nabla f(\hat x \hat x^\top)u = \lambda_{\text{min}}(\nabla f(\hat x \hat x^\top))$. Then via the definition of RSC we have
	\[
		f(M^*) \geq f(\hat X \hat X^\top) + \langle \nabla f(\hat X \hat X^\top ), M^* - \hat X \hat X^\top \rangle + \frac{\alpha_s}{2}  \|\hat X \hat X^\top -M^*\|^2_F.
	\]
	Given that $\hat X$ is also an FOP, we have that
	\[
		\langle \nabla f(\hat X \hat X^\top ), \hat X \hat X^\top \rangle = 0
	\]
	according to \eqref{eq:focp_unlifted} and since $f(\hat X \hat X^\top) - f(M^*) \geq 0$, one can write that
	\[
		\langle \nabla f(\hat X \hat X^\top ), M^* \rangle \leq -\frac{\alpha_s}{2} \|\hat x \hat x^\top -M^*\|^2_F
	\]
	after rearrangements. Furthermore, since both $\nabla f(\hat X \hat X^\top )$ and $M^*$ are assumed to be positive semidefinite for the above-mentioned reasons, we have that
	\[
		\langle \nabla f(\hat X \hat X^\top ), M^* \rangle \geq \lambda_{\min} (\nabla f(\hat X \hat X^\top )) \tr(M^*)
	\]
	which implies that 
	\begin{equation}\label{eq:G_bound}
		\lambda_{\min} (\nabla f(\hat X \hat X^\top )) \leq -\alpha_s \frac{\|\hat X \hat X^\top -M^*\|^2_F}{2\tr(M^*)} \leq 0
	\end{equation}
	This completes the proof.
\end{proof}

Now let us recall \eqref{eq:socp_unlifted}, which can be stated equivalently as  
\[
	\lambda_{\min} (\nabla f(\hat X \hat X^\top )) \geq - [\nabla^2 f(\hat X \hat X^\top)](\hat X U^\top + U \hat X^\top,\hat X U^\top + U \hat X^\top) \quad \forall U
\]
By using the $(L_s,r)$-RSS property and the assumption that the sensing matrices are symmetric, we can further lower-bound the right-hand side of the above inequality as 
\[
	- [\nabla^2 f(\hat X \hat X^\top)](\hat X U^\top + U \hat X^\top,\hat X U^\top + U \hat X^\top) \geq - 4 [\nabla^2 f(\hat X \hat X^\top)](\hat XU^\top) \geq -4 L_s \| \hat XU^\top \|^2_F
\]

Therefore, it is easy to see that a sufficient condition for the spurious SOPs to disappear is
\begin{equation}\label{eq:no_spurious_ineq}
	\alpha_s \frac{\|\hat X \hat X^\top -M^*\|^2_F}{2\tr(M^*)} \geq 4 L_s \| \hat XU^\top \|^2_F \quad \forall U
\end{equation}
which means that the $L_s$ and $\alpha_s$ parameters should be benign, and this essentially constitutes the main proof strategy in the existing literature showing in-existence of spurious solutions under benign RIP or RSS/RSC conditions \cite{zhang2019sharp,zhang2021general,zhang2020many,ma2022sharp,ma2023noisy}.

Therefore, it is natural to ask, in the case when $L_s$ and $\alpha_s$ do not satisfy \eqref{eq:no_spurious_ineq}, whether one can systematically over-parametrize the problem so that the LHS of \eqref{eq:no_spurious_ineq} eventually becomes bigger than the RHS. We know that if we just raise both the RHS and LHS to arbitrary powers, the sign of the inequality will not flip. Therefore, the key insight is that if we keep the constant 4 unchanged, and lift the other terms to arbitrary powers, we can eventually satisfy \eqref{eq:socp_unlifted}. In general terms, we take the following steps in order to establish a strong result regarding the conversion of spurious solutions to strict saddle points:
\begin{enumerate}
	\item Proving that $\langle \nabla f^l(\langle \mb{P}(\mb{\hat w}), \mb{P}(\mb{\hat w}) \rangle, \Delta \otimes \Delta \rangle \geq |\lambda_{\min} (\nabla f(\hat X \hat X^\top ))|^l$ for some appropriately chosen point $\Delta \in \RR^{nr \circ l}$.
	\item Proving that $\| \langle \mb{A}^{\otimes l} , \langle \mb{P}(\mb{w}), \mb{P}(\Delta) \rangle_{2*[l]} + \langle \mb{P}(\Delta), \mb{P}(\mb{w}) \rangle_{2*[l]} \rangle \|_F^2 \leq 4 L_s \| \hat X U^\top \|^{2l}_F$ for some appropriately chosen points $\Delta \in \RR^{nr \circ l}$ and $U \in \RR^{n \times r}$
	\item Finding the smallest $l$ that converts the spurious solution to strict saddle point, under mild technical conditions.
\end{enumerate}

Now we turn to the main result of the general-rank scenario, which concerns the conversion of spurious solutions to strict saddle points. We present the formal results below.

\begin{theorem}\label{thm:socp_highr}
	Consider an SOP $\hat X \in \RR^{n \times r}$ of \eqref{eq:unlifted_main} of general rank $r < n$, such that $\hat X \hat X^\top \neq M^*$, and assume that \eqref{eq:unlifted_main} satisfies the RSC and RSS conditions. Then $\mb{\hat w} = \vecc(\hat X)^{\otimes l}$ is a strict saddle of \eqref{eq:lifted_main_highr} with a rank-1 symmetric escape direction if $\hat X$ satisfies the inequality
	\begin{equation}\label{eq:thm_socp_distance_lower_highr}
		\|M^* - \hat X \hat X^\top \|^2_F \geq \frac{L_s}{\alpha_s} \lambda_r(\hat X \hat X^\top) \tr(M^*)
	\end{equation}
	and $l$ is odd and is large enough so that
	\begin{equation}\label{eq:thm_socp_lcond_highr}
		l > \frac{1}{1-\log_2(2 \beta)} 
	\end{equation}
	where $\beta$ is defined as
	\[
		\beta \coloneqq \frac{ L_s \tr(M^*) \lambda_r(\hat X \hat X^\top)}{\alpha_s \|M^* - \hat X \hat X^\top \|^2_F}.
	\]
	Here, $L_s$ and $\alpha_s$ are the respective RSS and RSC constants of \eqref{eq:unlifted_main}.
\end{theorem}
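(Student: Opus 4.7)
The plan is to construct a rank-1 symmetric tensor $\Delta \in \RR^{nr \circ l}$ at which the quadratic form in \eqref{eq:socp_lifted_highr} is strictly negative, thereby certifying that $\mb{\hat w} = \vecc(\hat X)^{\otimes l}$ (already an FOP of \eqref{eq:lifted_main_highr} by Theorem~\ref{thm:focp_highr}) is a strict saddle with a rank-1 symmetric escape direction. The three-step outline previewed before the theorem organizes the argument naturally: lower bound the magnitude of the first term of \eqref{eq:socp_lifted_highr} in terms of a power of $|\lambda_{\min}(\nabla f(\hat X \hat X^\top))|$, upper bound the second term by a power of $L_s \lambda_r(\hat X \hat X^\top)$, and then balance the two by choosing $l$ large enough.

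For the escape direction I would take $\Delta = \vecc(U)^{\otimes l}$ with $U = u v^\top \in \RR^{n \times r}$, where $u \in \RR^n$ is a unit eigenvector of $\nabla f(\hat X \hat X^\top)$ realizing $\lambda_{\min}(\nabla f(\hat X \hat X^\top))$ and $v \in \RR^r$ is a unit right-singular vector of $\hat X$ realizing $\sigma_r(\hat X)$, so that $\|\hat X v\|_2^2 = \lambda_r(\hat X \hat X^\top)$. By Lemma~\ref{lem:kron_iden} together with \eqref{eq:highr_vectorization}, the contractions appearing in \eqref{eq:socp_lifted_highr} collapse to matrix-valued powers: $\langle \mb{P}(\Delta), \mb{P}(\Delta)\rangle_{2*[l]} = (uu^\top)^{\otimes l}$ and $\langle \mb{P}(\mb{\hat w}), \mb{P}(\Delta)\rangle_{2*[l]} = (\hat X v u^\top)^{\otimes l}$. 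Expanding $\nabla f^l$ via the gradient formula derived in the proof of Lemma~\ref{lem:cp_lifted_highr} and applying the scalar collapse $\langle a^{\otimes l}, b^{\otimes l}\rangle = \langle a, b\rangle^l$, the first term of \eqref{eq:socp_lifted_highr} simplifies to $4(c^l - d^l)$ where $c = \langle \mathcal{A}^*\mathcal{A}(\hat X \hat X^\top), uu^\top\rangle$, $d = \langle \mathcal{A}^*\mathcal{A}(M^*), uu^\top\rangle$, and $c - d = u^\top \nabla f(\hat X \hat X^\top) u = \lambda_{\min}(\nabla f(\hat X \hat X^\top)) < 0$ by Lemma~\ref{lem:smallest_eig_socp}. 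By symmetry of the sensing matrices, the second term collapses to $4\|\mathcal{A}(\hat X v u^\top)\|^{2l}$, which the $(L_s, r)$-RSS property bounds above by $4(L_s \lambda_r(\hat X \hat X^\top))^l$.

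Since $l$ is odd and $c < d$, the first term is strictly negative. Substituting the bound $|\lambda_{\min}(\nabla f(\hat X \hat X^\top))| \geq \alpha_s \|M^* - \hat X \hat X^\top\|_F^2 / (2\tr(M^*))$ from Lemma~\ref{lem:smallest_eig_socp} into the resulting strict inequality recasts the requirement ``SOC $< 0$'' as $(2\beta)^l < \text{const}$, which after taking logs is exactly \eqref{eq:thm_socp_lcond_highr}; the distance hypothesis \eqref{eq:thm_socp_distance_lower_highr} ensures $\beta \leq 1$ so the condition is nonvacuous. The main obstacle is the final sub-step of lower bounding $d^l - c^l$ by a power of $|\lambda_{\min}(\nabla f(\hat X \hat X^\top))|$ without incurring an $l$-dependent slack that would wash out the exponential savings from lifting. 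In the regime $0 \leq c < d$, the factorization $d^l - c^l = (d - c)\sum_{i=0}^{l-1} d^{l-1-i} c^i \geq (d-c)\, d^{l-1} \geq (d - c)^l = |\lambda_{\min}(\nabla f(\hat X \hat X^\top))|^l$ gives the clean estimate directly; the remaining sign cases require a careful analysis exploiting the PSD structure of $\hat X \hat X^\top$ and $M^*$ together with the near-isometry of $\mathcal{A}^*\mathcal{A}$ on low-rank PSD matrices encoded by the RIP constant $\delta_{2r} = (L_s - \alpha_s)/(L_s + \alpha_s)$, and this bookkeeping is where I anticipate the bulk of the technical work to lie.
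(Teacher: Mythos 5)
Your proof follows the same route as the paper: same escape direction $\Delta = \vecc(uv^\top)^{\otimes l}$, same collapse of the lifted quadratic form via Lemma~\ref{lem:kron_iden}, same lower bound on $|\lambda_{\min}(\nabla f(\hat X\hat X^\top))|$ from Lemma~\ref{lem:smallest_eig_socp}, and the same elementary power inequality on the gradient term. However, there is a genuine gap in the bound on the curvature term. You write that the second term of \eqref{eq:socp_lifted_highr} collapses to $4\|\mathcal{A}(\hat X v u^\top)\|^{2l}$ and then bound this by $4(L_s\lambda_r(\hat X\hat X^\top))^l$, via the direct estimate $\|\mathcal{A}(\hat X vu^\top)\|^2 \leq L_s\|\hat X vu^\top\|_F^2 = L_s\lambda_r(\hat X\hat X^\top)$. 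But combined with your first-term bound of roughly $-2G^l$, and recalling $G \leq L_s\lambda_r(\hat X\hat X^\top)$ (Lemma~\ref{lem:G_upper}), you get $-2G^l + 4(L_s\lambda_r)^l \geq 2(L_s\lambda_r)^l > 0$ for every $l$: the sign of the quadratic form never becomes negative, and increasing $l$ never helps. So the argument, as stated, fails to certify a strict saddle.

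The missing ingredient is the first-order identity $\hat X^\top u = 0$: since $\nabla f(\hat X\hat X^\top)\hat X = 0$ and $u$ is an eigenvector for the (nonzero) $\lambda_{\min}$, the symmetry of $\nabla f$ forces $u$ to be orthogonal to the column space of $\hat X$. With this, $\|\hat X vu^\top + uv^\top\hat X^\top\|_F^2 = 2\|\hat X v\|^2 + 2(u^\top\hat X v)^2 = 2\lambda_r(\hat X\hat X^\top)$ (the cross-term vanishes), so $\|\mathcal{A}(\hat X vu^\top)\|^2 = \tfrac14\|\mathcal{A}(\hat X vu^\top + uv^\top\hat X^\top)\|^2 \leq \tfrac{L_s}{2}\lambda_r(\hat X\hat X^\top)$. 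This extra factor of $\tfrac12$, raised to the power $l$, is precisely what produces the paper's $\frac{2}{2^{l-1}}L_s^l\lambda_r^l$ upper bound and makes the comparison against $-2G^l$ winnable for large $l$; it is the entire mechanism by which lifting converts a spurious SOP into a saddle. Without it your final log-manipulation cannot reproduce $l > 1/(1-\log_2(2\beta))$. (Separately, your first-term constant $4(c^l-d^l)$ should be $2(c^l-d^l)$, but that is a harmless slip; the $2^{-l}$ on the second term is the one that matters.)
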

\begin{proof}[Proof of Theorem~\ref{thm:socp_highr}]
	By Lemma~\ref{lem:smallest_eig_socp}, we select  $u \in \RR^n$ such that $u^\top \nabla f(\hat X \hat X^\top)u = \lambda_{\text{min}}(\nabla f(\hat X \hat X^\top))$ with $\lambda_{\text{min}}(\nabla f(\hat X \hat X^\top)) \leq 0$ .
	
	Now define $G \coloneqq - \lambda_{\text{min}}(\nabla f(\hat X \hat X^\top)) \geq 0$. If we label 
	\[
		C_1 \coloneqq \langle \nabla f(\hat X \hat X^\top), U U^\top \rangle, \quad C_2 \coloneqq [\nabla^2 f(\hat X \hat X^\top)](\hat X U^\top,\hat X U^\top)
	\]
	Then we have that $C_1 = -G$. Also, since the sensing matrices $A_a$ can be assumed be to symmetric, we have that 
	\[
		[\nabla^2 f(\hat X \hat X^\top)](\hat X U^\top + U \hat X^\top,\hat X U^\top + U \hat X^\top) = 4 [\nabla^2 f(\hat X \hat X^\top)](\hat X U^\top,\hat X U^\top).
	\]
	Additionally we choose $q \in \RR^{r}$ to be the $r$-th singular value of $\hat X$, with
	\[
		\|\hat X q\|_2 = \sigma_r(\hat X), \qquad \|q\|_2=1
	\]
	and define $U \in \RR^{n \times r} = uq^\top$. Subsequently, the RSS condition can be used to show that
	\begin{equation*}
		\begin{aligned}
			 &[\nabla^2 f(\hat X \hat X^\top)](\hat X U^\top + U \hat X^\top,\hat X U^\top + U \hat X^\top) \leq L_s \|\hat X U^\top + U \hat X^\top\|^2_F  \\
			&= L_s \|u(\hat X q)^\top + (\hat X q)u^\top \|^2_F = 2 L_s \|\hat X q\|^2_F + 2 L_s (q^\top (\hat X^\top u))^2 = 2 L_s \lambda_r(\hat X \hat X^\top)
		\end{aligned}
	\end{equation*}
	since $\hat X^\top u = 0$ according to the first-order condition \eqref{eq:focp_unlifted}. Therefore,
	\[
		C_2 \leq \frac{1}{2} L_s \lambda_r(\hat X \hat X^\top)
	\]
	
	Now, if we choose $\Delta = \vecc(U)^{\otimes l}$ for the aforementioned $U \in \RR^{n \times r}$, the LHS of \eqref{eq:socp_lifted_highr} can be expressed as:
	\begin{equation}\label{eq:thm_socp_highr_help1}
		\begin{aligned}
			\text{LHS} &= 2 ( \langle \mb{A}, \hat X \hat X^\top \rangle_{2,3}^\top \langle \mb{A}, uu^\top \rangle_{2,3})^{l} - 2 ( \langle \mb{A}, M^* \rangle_{2,3}^\top \langle \mb{A}, uu^\top \rangle_{2,3})^{l} + 4 ( \|\langle \mb{A}, \hat X U^\top \rangle_{2,3}\|^2_2)^{l} \\
			&\leq  2 (\lambda_{\text{min}}(\nabla f(\hat X \hat X^\top)))^l + 4 C_2^l \\
			&=  2 C_1^l + 4 C_2^l
		\end{aligned}
	\end{equation}
	where the inequality follows from:
	\[
		a^n - b^n \leq (a-b)^n, \quad \forall b \geq a \geq 0
	\]
	Here, since $a-b = C_1 \leq 0$, the above inequality can be used. As a result,
	\[
		\text{LHS of \eqref{eq:socp_lifted_highr}} \leq \underbrace{-2G^l}_{\text{Part 1}} + \underbrace{\frac{2}{2^{l-1}} L^l_s \lambda_r(\hat X \hat X^\top)^l}_{\text{Part 2}}
	\]
	We know since $G \geq 0$, Part 1 is always negative assuming $l$ is odd, and Part 2 is always positive. Therefore, it suffices to find an order $l$ such that 
	\begin{equation}\label{eq:l_condition_highr}
		G^l > (1/2^{l-1}) L^l_s \lambda_r(\hat X \hat X^\top)^l
	\end{equation}
	To derive a sufficient condition for \eqref{eq:l_condition_highr}, we first need a lower bound on $G$, and Lemma \eqref{lem:smallest_eig_socp} conveniently provides this bound, giving that
	\begin{equation}\label{eq:G_lower_highr}
		G \geq \frac{\alpha_s}{2 \tr(M^*)} \|M^* - \hat X \hat X^\top \|^2_F
	\end{equation}
	Therefore, if
	\[
		\left( \frac{\alpha_s}{2 \tr(M^*)} \|M^* - \hat X \hat X^\top \|^2_F \right)^l > (1/2^{l-1}) L^l_s \lambda_r(\hat X \hat X^\top)^l,
	\]
	we can conclude that \eqref{eq:l_condition_highr} holds, which implies that the LHS of \eqref{eq:socp_lifted_highr} is negative, directly proving that $\hat X^{\otimes l}$ is not an SOP anymore. Elementary manipulations of the above equation give that a sufficient condition is 
	\begin{equation}\label{eq:l_condition2}
		\|M^* - \hat X \hat X^\top \|^2_F > 2^{1/l} \frac{L_s}{\alpha_s} \lambda_r(\hat X \hat X^\top) \tr(M^*)
	\end{equation}
	We now consider \eqref{eq:thm_socp_distance_lower_highr}, which means that
	\begin{equation}\label{eq:xhat_upper_D_highr}
		\lambda_r(\hat X \hat X^\top)  \leq \frac{\alpha_s}{L_s \tr(M^*)} \|M^* - \hat X \hat X^\top \|^2_F
	\end{equation}
	Subsequently, define a constant $\gamma$ such that
	\[
		L_s \lambda_r(\hat X \hat X^\top) = \gamma (\frac{\alpha_s}{2 \tr(M^*)} \|M^* - \hat X \hat X^\top \|^2_F) 
	\]
	Then, according to Lemma \ref{lem:G_upper} and \eqref{eq:G_lower_highr}, we can conclude that $\gamma \geq 1$. Moreover, \eqref{eq:xhat_upper_D_highr} also means that $\gamma < 2$. With this new definition, the sufficient condition \eqref{eq:l_condition2} becomes
	\begin{equation}\label{eq:l_condition3_highr}
		1 > \frac{\gamma}{2^{(l-1)/l}}
	\end{equation}
	Since we already know that $1 \leq \gamma < 2$, there always exists a large enough $l$ such that \eqref{eq:l_condition3_highr} holds, which in turn implies that LHS of \eqref{eq:socp_lifted_highr} is negative, proving that $\vecc(\hat X)^{\otimes l}$ is a saddle point with the escape direction $\vecc(U)^{\otimes l}$, proving the claim.
	
	Next, we aim to study how large $l$ needs to be in order for \eqref{eq:l_condition3_highr} to hold. Again, we know that
	\[
		\gamma = \frac{2 L_s \tr(M^*) \lambda_r(\hat X \hat X^\top)}{\alpha_s \|M^* - \hat X \hat X^\top \|^2_F} \coloneqq 2 \beta
	\]
	and that $\beta \leq 1$ due to assumption \eqref{eq:thm_socp_distance_lower_highr}. Therefore, for \eqref{eq:l_condition3_highr} to hold true, it is enough to have 
	\[
		2^{(l-1)/l} > 2 \beta \implies \frac{l-1}{l} > \log_2(2 \beta) \implies l > \frac{1}{1-\log_2(2 \beta)} 
	\]
\end{proof}

\subsection{Other Considerations of Lifted Landscape}

In the previous sections, we have shown that by lifting the optimization problem \eqref{eq:unlifted_main} into tensor spaces, we could convert spurious local solutions into strict saddle points. However, it is also important that we could distinguish the true ground truth solutions $Z \in \RR^{n \times r}$ with $ZZ^\top. = M^*$ from the spurious ones. This requires that the true solutions $Z$ will remain SOPs after lifting, which we indeed prove in the following theorem:
\begin{theorem}\label{thm:socp_z}
	Assume that $Z \in \RR^{n \times r}$ is a ground truth solution of \eqref{eq:unlifted_main} such that $ZZ^\top = M^*$. Then $\vecc(Z)^{\otimes l}$ remains an SOP of \eqref{eq:lifted_main_highr} regardless of the parametrization level $l$, and without the need for \eqref{eq:unlifted_main} to satisfy the RSC or RSS conditions.
\end{theorem}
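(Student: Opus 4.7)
The proof plan is to verify directly that both the first-order condition \eqref{eq:focp_lifted_highr} and the second-order condition \eqref{eq:socp_lifted_highr} reduce to trivial statements when $\mb{\hat w} = \vecc(Z)^{\otimes l}$, because at such a point the residual of the lifted cost vanishes and hence $\nabla f^l$ at the corresponding tensor argument is the zero tensor.

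First I would unpack $\mathcal{M}(\mb{\hat w}) = \langle \mb{P}(\mb{\hat w}), \mb{P}(\mb{\hat w}) \rangle_{2*[l]}$ for $\mb{\hat w} = \vecc(Z)^{\otimes l}$. Using \eqref{eq:highr_vectorization} together with the Kronecker-type identity of Lemma~\ref{lem:kron_iden}, one gets $\mb{P}(\mb{\hat w}) = Z^{\otimes l}$, and then $\langle Z^{\otimes l}, Z^{\otimes l}\rangle_{2*[l]} = (ZZ^\top)^{\otimes l} = (M^*)^{\otimes l}$. Applying $\mb{A}^{\otimes l}$ gives $\langle \mb{A}^{\otimes l}, (M^*)^{\otimes l}\rangle = (\langle \mb{A}, M^*\rangle)^{\otimes l} = b^{\otimes l}$ by the same identity, so the residual vanishes:
\begin{equation*}
\langle \mb{A}^{\otimes l}, \mathcal{M}(\mb{\hat w})\rangle - b^{\otimes l} = 0.
\end{equation*}
From the expression for $\nabla f^l$ derived in the proof of Lemma~\ref{lem:cp_lifted_highr} (equation \eqref{eq:lem_cp_highr_help1}), the gradient at this point is a contraction of the residual with $\mb{A}^{\otimes l}$, hence $\nabla f^l(\mathcal{M}(\mb{\hat w})) = 0$.

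Given this, the FOP condition \eqref{eq:focp_lifted_highr} is immediate, since it is obtained by contracting the zero tensor $\nabla f^l(\mathcal{M}(\mb{\hat w}))$ against $\mb{P}(\mb{\hat w})$. For the SOP condition \eqref{eq:socp_lifted_highr}, the first summand on its left-hand side is an inner product involving $\nabla f^l(\mathcal{M}(\mb{\hat w})) = 0$, hence vanishes for every $\Delta \in \RR^{nr \circ l}$. The remaining summand is $\| \langle \mb{A}^{\otimes l}, \langle \mb{P}(\mb{\hat w}), \mb{P}(\Delta)\rangle_{2*[l]} + \langle \mb{P}(\Delta), \mb{P}(\mb{\hat w})\rangle_{2*[l]}\rangle\|_F^2$, which is manifestly nonnegative as a squared Frobenius norm. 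Thus \eqref{eq:socp_lifted_highr} holds for every $\Delta$, establishing that $\vecc(Z)^{\otimes l}$ is an SOP.

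Since the entire argument rests only on the algebraic identity $\langle \mb{A}^{\otimes l}, (ZZ^\top)^{\otimes l}\rangle = b^{\otimes l}$ and the subsequent vanishing of $\nabla f^l$, it goes through for arbitrary odd or even $l \geq 1$ and makes no appeal to the RSC/RSS constants. There is essentially no obstacle here; the only mild bookkeeping is tracking the correct contraction indices through $\mb{P}$ and $\mb{A}^{\otimes l}$, but Lemma~\ref{lem:kron_iden} collapses all of this to the scalar fact that raising both sides of $b = \mathcal{A}(M^*)$ to the $l$-fold outer power preserves equality.
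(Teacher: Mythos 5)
Your proposal is correct and follows essentially the same route as the paper's own proof: identify $\mathcal{M}(\vecc(Z)^{\otimes l}) = (M^*)^{\otimes l}$, observe that $\nabla f^l$ vanishes there, hence the FOP condition holds trivially, and the SOP condition reduces to the nonnegativity of a squared Frobenius norm, all without invoking RSS/RSC. The only (minor) difference is that you unpack the residual $\langle \mb{A}^{\otimes l}, (M^*)^{\otimes l}\rangle - b^{\otimes l} = 0$ explicitly via Lemma~\ref{lem:kron_iden}, whereas the paper short-circuits this by noting $\mb{M} - \mathcal{M}(\vecc(Z)^{\otimes l}) = 0$ directly in the gradient formula.
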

\begin{proof}[Proof of Theorem~\ref{thm:socp_z}]
	Let us start with the first-order optimality condition. Consider the linear map in the proof of Lemma~\ref{lem:cp_lifted_highr} $\mathcal{M}: \RR^{nr \circ l} \mapsto \RR^{n \circ 2l}$
	\begin{equation*}
		\mathcal{M}(\mb{w}) = \langle \mb{P}(\mb{w}), \mb{P}(\mb{w}) \rangle_{2*[l]},
	\end{equation*}
	Again, it is apparent that
	\begin{equation*}
		\nabla f^l( \mb{M}) =  \langle \langle \mb{A}^{\otimes l}, \mb{M} - \mathcal{M}(\vecc(Z)^{\otimes l})\rangle, \mb{A}^{\otimes l} \rangle_{1,4,\dots,3l-2}
	\end{equation*}
	Therefore, at the point $\mb{M}$ = $\mathcal{M}(\vecc(Z)^{\otimes l})$, we know that $\nabla f^l(\mathcal{M}(\vecc(Z)^{\otimes l})) = 0$. Consequently, the LHS of \eqref{eq:focp_lifted_highr} is equal to zero since it is a product between $\nabla f^l(\mathcal{M}(\vecc(Z)^{\otimes l}))$ and $\mb{P}(\vecc(Z)^{\otimes l})$.
	
	Next, we turn to the second-order optimality condition. Again, recall from the proof of Lemma~\ref{lem:cp_lifted_highr} that
	\[
	 	\text{LHS of \eqref{eq:socp_lifted_highr}} 	= \underbrace{2 \langle \nabla f^l(\mathcal{M}(\mb{w})) ,\mathcal{M}(\mb{\Delta}) \rangle}_{\text{Part 1}} + \underbrace{\|\langle \mb{A}^{\otimes l}, D_{\mb{w}} \mathcal{M}(\mb{\Delta})   \rangle\|^2_F}_{\text{Part 2}}
	\]
	By the above arguments, we have $\nabla f^l(\mathcal{M}(\mb{w})) = 0$ when $\mb{w} = \vecc(Z)^{\otimes l}$, meaning that Part 1 equals to zero. This implies that
	\[
		\text{LHS of \eqref{eq:socp_lifted_highr}} 	= \|\langle \mb{A}^{\otimes l}, D_{\mb{w}} \mathcal{M}(\mb{\Delta})   \rangle\|^2_F \geq 0, \qquad \forall \mb{\Delta}
	\]
	regardless of the values of $\mb{A}$ or $\mb{w} = \vecc(Z)^{\otimes l}$. 
\end{proof}

Next, it is important to analyze the main results obtained in Theorem~\ref{thm:socp_highr} under the lens of some other existing characterizations of the loss landscape of \eqref{eq:unlifted_main}. According to Theorem~\ref{thm:socp_highr}, over-parametrization or lifting proves to be highly beneficial when dealing with spurious solutions, represented by $\hat X$, that significantly deviate from the actual ground truth. The theorem implies that as the distance between $\hat X$ and the ground truth increases, a smaller value of $l$ is necessary for $\vecc(\hat X)^{\otimes l}$ to evolve into a saddle point, as alluded to in \eqref{eq:thm_socp_lcond_highr}. This concept is consistent with previous studies, which maintain that the area surrounding $M^*$ exhibits a favorable optimization landscape, characterized by an absence of deceptive local solutions in a specified zone around $M^*$. A commonly cited illustration of this assertion is provided below.
\begin{theorem}[Theorem 3 \cite{bi2020global}]
	If $\hat X$ is an SOP of \eqref{eq:unlifted_main} and
	\begin{equation}\label{eq:thm_local_region_range}
		\|\hat X \hat X^\top - M^*\|_F \leq \frac{4L_s \alpha_s}{(L_s+\alpha_s)^2} \lambda_r(M^*),
	\end{equation}
	then
	\[
		\hat X \hat X^\top = M^*
	\]
\end{theorem}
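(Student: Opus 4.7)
The plan is to apply the second-order necessary condition \eqref{eq:socp_unlifted} at $\hat X$ along a direction that moves $\hat X$ toward an aligned ground-truth factor, then combine with the first-order condition to turn the SOCP into a direct inequality on $\Delta \coloneqq \hat X \hat X^\top - M^*$. Let $Z \in \RR^{n \times r}$ be any factor with $ZZ^\top = M^*$, let $R$ be the orthogonal matrix minimising $\|\hat X - ZR\|_F$ (so $\hat X$ and $ZR$ are aligned), and choose the test direction $U = ZR - \hat X \in \RR^{n \times r}$. A short expansion gives the central algebraic identity
\begin{equation*}
\hat X U^\top + U \hat X^\top + UU^\top = ZZ^\top - \hat X \hat X^\top = -\Delta,
\end{equation*}
which couples every term appearing in the SOCP directly to $\Delta$.

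Using $\nabla f(\hat X\hat X^\top) = \mathcal{A}^*\mathcal{A}(\Delta)$, the first-order condition \eqref{eq:focp_unlifted} implies that $\langle \mathcal{A}(\Delta), \mathcal{A}(\hat X W^\top + W\hat X^\top)\rangle = 0$ for every $W \in \RR^{n \times r}$. Taking $W = U$ and invoking the identity above yields $\langle \mathcal{A}(\Delta), \mathcal{A}(UU^\top)\rangle = -\|\mathcal{A}(\Delta)\|^2$. Substituting this together with $\|\mathcal{A}(\hat X U^\top + U\hat X^\top)\|^2 = \|\mathcal{A}(\Delta + UU^\top)\|^2$ into \eqref{eq:socp_unlifted}, the cross terms collapse and the second-order inequality reduces to the clean form
\begin{equation*}
\|\mathcal{A}(UU^\top)\|^2 \geq 3\,\|\mathcal{A}(\Delta)\|^2.
\end{equation*}

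From here RSS upper bounds the left-hand side by $L_s\|UU^\top\|_F^2$ and RSC lower bounds the right-hand side by $3\alpha_s\|\Delta\|_F^2$ (both apply since $\Delta$ and $UU^\top$ have rank at most $2r$). Combining with $\|UU^\top\|_F \leq \|U\|_2\|U\|_F$ and the standard aligned-factor perturbation lemma (e.g.\ Tu et al.), which gives $\|\hat X - ZR\|_F^2 \leq \tfrac{1}{2(\sqrt 2 -1)\lambda_r(M^*)}\|\Delta\|_F^2$ together with a companion spectral-norm bound, yields an upper bound on $\|UU^\top\|_F^2$ proportional to $\|\Delta\|_F^4/\lambda_r(M^*)^2$. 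Substituting into $L_s\|UU^\top\|_F^2 \geq 3\alpha_s\|\Delta\|_F^2$ produces a lower bound of the form $\|\Delta\|_F \geq c(L_s,\alpha_s)\,\lambda_r(M^*)$, which together with the hypothesis $\|\Delta\|_F \leq \tfrac{4L_s\alpha_s}{(L_s+\alpha_s)^2}\lambda_r(M^*)$ can only hold if $\Delta = 0$.

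The main obstacle is tightening the constants so that $c(L_s,\alpha_s)$ matches the precise prefactor $\tfrac{4L_s\alpha_s}{(L_s+\alpha_s)^2}$ in the hypothesis. The crude bound $\|UU^\top\|_F \leq \|U\|_F^2$ discards a factor of $\|U\|_2/\|U\|_F$, and recognising the target prefactor as $1-\delta_{2r}^2$ with $\delta_{2r} = (L_s-\alpha_s)/(L_s+\alpha_s)$ strongly suggests that the sharp analysis should (i) rescale $U$ by an optimising scalar before applying SOCP, and (ii) estimate $\|U\|_2$ and $\|U\|_F$ separately via spectral and Frobenius versions of the alignment lemma, so that the RIP constant $\delta_{2r}$ emerges naturally after optimisation. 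The structural part of the argument — the identity, the FOP-induced orthogonality, and the reduction to a single Frobenius-norm comparison — is entirely elementary.
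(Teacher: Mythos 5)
The paper does not prove this statement; it quotes it verbatim as Theorem 3 of \cite{bi2020global} and uses it as a black-box ingredient in the proof of Theorem~\ref{thm:socp_corollary}, so there is no internal argument to compare against.

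Your attempt is nonetheless structurally correct and follows the standard Burer--Monteiro landscape argument. The algebraic identity $\hat X U^\top + U\hat X^\top + UU^\top = -\Delta$ for the aligned direction $U = ZR - \hat X$, the FOP-induced orthogonality $\langle \mathcal{A}(\Delta),\, \mathcal{A}(\hat X W^\top + W\hat X^\top)\rangle = 0$, and the collapse of \eqref{eq:socp_unlifted} to $\|\mathcal{A}(UU^\top)\|^2 \ge 3\|\mathcal{A}(\Delta)\|^2$ are all right, and the RSS/RSC bounds legitimately apply since $\Delta$ and $UU^\top$ are each expressible as a difference of two rank-$\le r$ matrices, which is exactly what the paper's $(L_s,r)$/$(\alpha_s,r)$ definition covers. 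Your closing worry about tightening constants is unfounded for proving the theorem as stated. With the crude $\|UU^\top\|_F \le \|U\|_F^2$ and the distance lemma $\|U\|_F^2 \le \|\Delta\|_F^2 / \bigl(2(\sqrt 2-1)\lambda_r(M^*)\bigr)$, any SOP with $\Delta \ne 0$ must satisfy
\[
\|\Delta\|_F \ \ge\ 2\sqrt 3\,(\sqrt 2 - 1)\sqrt{\tfrac{\alpha_s}{L_s}}\ \lambda_r(M^*).
\]
Writing $t = \alpha_s/L_s \in (0,1]$, the ratio of this lower bound to the claimed radius $\frac{4t}{(1+t)^2}\lambda_r(M^*)$ equals $\frac{\sqrt 3(\sqrt 2-1)(1+t)^2}{2\sqrt t}$, minimized at $t = 1/3$ with value $\frac{8(\sqrt 2-1)}{3}\approx 1.10 > 1$. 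So the crude chain already contradicts the hypothesis and forces $\Delta=0$; the refined spectral/Frobenius split and rescaling you sketch are only needed if you want the derived threshold to coincide exactly with $1 - \delta_{2r}^2$, which the theorem does not claim is sharp. Two minor points worth stating: the distance lemma and the theorem are nontrivial only when $\lambda_r(M^*)>0$, i.e.\ $\rk(M^*)=r$ (otherwise the radius is zero and the conclusion is vacuous), and the FOP orthogonality step uses the symmetry of $\mathcal{A}^*\mathcal{A}(\Delta)$, which the paper assumes throughout.
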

This means that any spurious solution of \eqref{eq:unlifted_main} is reasonably far away from the ground truth solution $M^*$. Coupled with the fact that lifting the problem into higher-dimensional tensor spaces can convert spurious solutions far away from $M^*$ into strict saddles points, we can ascertain that by setting the RHS of \eqref{eq:thm_local_region_range} to be greater or equal to the RHS of \eqref{eq:thm_socp_distance_lower_highr}, all spurious solutions will be converted into strict saddle points via lifting. We make this key observation concrete in the following theorem.
\begin{theorem}\label{thm:socp_corollary}
	Assume that $\hat X \in \RR^{n \times r}$ is a spurious solution of \eqref{eq:unlifted_main}, and that \eqref{eq:unlifted_main} satisfies the RSC and RSS assumptions with the $\alpha_s$ and $L_s$ constants, respectively. Then $\vecc(\hat X)^{\otimes l}$ is a strict saddle point of \eqref{eq:lifted_main_highr} for an odd $l$ satisfying \eqref{eq:thm_socp_lcond_highr} if
	\begin{equation}
		\|M^*\|_F \leq \frac{1}{\tau \sqrt{r}}\frac{2\sqrt{2} \alpha_s^{5/2}}{(L_s+\alpha_s)^2 \sqrt{L_s}}
	\end{equation}
	where $\tau$ is the condition number of $M^*$.
\end{theorem}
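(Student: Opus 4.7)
The plan is to reduce the claim to Theorem~\ref{thm:socp_highr} by verifying that its distance hypothesis \eqref{eq:thm_socp_distance_lower_highr} is automatically forced to hold whenever $\hat X$ is a spurious SOP of \eqref{eq:unlifted_main} and $\|M^*\|_F$ respects the stated upper bound. Once \eqref{eq:thm_socp_distance_lower_highr} is secured, Theorem~\ref{thm:socp_highr} applied at $\hat X$ delivers the strict-saddle conclusion for any odd $l$ obeying \eqref{eq:thm_socp_lcond_highr}, which is exactly what the corollary asserts.

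The argument I would assemble has three moving parts. First, the local-landscape bound displayed immediately before the statement (Theorem 3 of \cite{bi2020global}) tells me that every spurious SOP $\hat X$ must lie outside a neighborhood of $M^*$: $\|\hat X \hat X^\top - M^*\|_F > \frac{4 L_s \alpha_s}{(L_s+\alpha_s)^2}\lambda_r(M^*)$. Squaring yields a lower bound on the LHS of \eqref{eq:thm_socp_distance_lower_highr} that is quadratic in $\lambda_r(M^*)$. Second, I would upper-bound the RHS of \eqref{eq:thm_socp_distance_lower_highr} by combining Lemma~\ref{lem:xhat_upper}, which gives $\lambda_r(\hat X \hat X^\top) < \sqrt{2L_s/(r\alpha_s)}\,\|M^*\|_F$, with the standard nuclear-norm inequality $\tr(M^*)\leq\sqrt{r}\,\|M^*\|_F$ (valid since $\rank(M^*)\leq r$); multiplying these estimates yields $(L_s/\alpha_s)\lambda_r(\hat X \hat X^\top)\tr(M^*) < \sqrt{2}\,L_s^{3/2}\alpha_s^{-3/2}\|M^*\|_F^2$. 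Third, to translate between $\lambda_r(M^*)$ and $\|M^*\|_F$ I would invoke the condition-number identity $\lambda_r(M^*) = \lambda_1(M^*)/\tau$ together with the spectral--Frobenius bound $\lambda_1(M^*) \geq \|M^*\|_F/\sqrt{r}$, obtaining $\lambda_r(M^*) \geq \|M^*\|_F/(\tau\sqrt{r})$.

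Combining the three ingredients produces a single sufficient inequality relating the lower bound on the LHS and the upper bound on the RHS of \eqref{eq:thm_socp_distance_lower_highr}, namely $\frac{16L_s^2\alpha_s^2}{(L_s+\alpha_s)^4\tau^2 r}\|M^*\|_F^2 \geq \sqrt{2}\,L_s^{3/2}\alpha_s^{-3/2}\|M^*\|_F^2$ after using the condition-number substitution. A routine rearrangement then recovers precisely the bound $\|M^*\|_F \leq \frac{1}{\tau\sqrt{r}}\cdot\frac{2\sqrt{2}\,\alpha_s^{5/2}}{(L_s+\alpha_s)^2\sqrt{L_s}}$ asserted in the theorem, and under that bound \eqref{eq:thm_socp_distance_lower_highr} holds, so the hypotheses of Theorem~\ref{thm:socp_highr} are in force. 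The main obstacle is bookkeeping rather than insight: the exponents of $L_s$ and $\alpha_s$ coming from the three estimates must be carried through carefully so that the final constant matches. All the geometric content is inherited from Theorem~\ref{thm:socp_highr}, and the parity requirement on $l$ together with the logarithmic lower bound $l > 1/(1-\log_2(2\beta))$ transfers across verbatim.
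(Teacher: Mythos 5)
Your plan—reduce to Theorem~\ref{thm:socp_highr} by forcing \eqref{eq:thm_socp_distance_lower_highr} to hold via the local-landscape bound \eqref{eq:thm_local_region_range} and Lemma~\ref{lem:xhat_upper}—is the same plan the paper follows. However, the execution has a concrete breakdown: your three estimates make both sides of the sufficient inequality homogeneous of degree two in $M^*$, so $\|M^*\|_F^2$ cancels and no bound on $\|M^*\|_F$ can be extracted. Explicitly, after squaring the local-landscape bound, replacing $\lambda_r(M^*)$ by $\|M^*\|_F/(\tau\sqrt r)$, and replacing $\tr(M^*)$ by $\sqrt r\,\|M^*\|_F$, you arrive at
\[
  \frac{16 L_s^2 \alpha_s^2}{(L_s+\alpha_s)^4 \tau^2 r}\,\|M^*\|_F^2 \;\geq\; \sqrt 2\,\frac{L_s^{3/2}}{\alpha_s^{3/2}}\,\|M^*\|_F^2 ,
\]
which simplifies to the parameter-only condition $8\sqrt 2\,\alpha_s^{7/2}\sqrt{L_s} \geq r\tau^2 (L_s+\alpha_s)^4$; there is no ``routine rearrangement'' that turns this into an upper bound on $\|M^*\|_F$. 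The final sentence of your sketch asserting that the stated $\|M^*\|_F$ bound ``is recovered precisely'' is therefore false.

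The paper's proof avoids this by doing two things differently. First, it does \emph{not} square the distance bound from \eqref{eq:thm_local_region_range}: it compares the \emph{linear-in-$M^*$} quantity $\frac{4L_s\alpha_s}{(L_s+\alpha_s)^2}\lambda_r(M^*)$ directly against the \emph{quadratic-in-$M^*$} quantity $\sqrt{2L_s^3/(r\alpha_s^3)}\,\|M^*\|_F\tr(M^*)$ obtained from Lemma~\ref{lem:xhat_upper}. Because the two sides have different homogeneity degrees in $M^*$, the comparison does yield an upper bound on $\|M^*\|_F$. Second, rather than converting both $\tr(M^*)$ and $\lambda_r(M^*)$ into $\|M^*\|_F$ as you do, the paper keeps the dimensionless ratio and uses only $\tr(M^*)\le r\tau\,\lambda_r(M^*)$, which preserves the single surviving factor of $\|M^*\|_F$. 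You should check whether comparing the unsquared $\|\cdot\|_F$ from \eqref{eq:thm_local_region_range} against the squared quantity required by \eqref{eq:thm_socp_distance_lower_highr} is actually what you intend, and if you insist on squaring (as the $\|\cdot\|_F^2$ in \eqref{eq:thm_socp_distance_lower_highr} suggests), you must also keep the degree-one/degree-two mismatch by not converting $\lambda_r(M^*)$ and $\tr(M^*)$ into $\|M^*\|_F$ wholesale; as written, your argument cannot recover the theorem's conclusion.
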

\begin{proof}[Proof of Theorem~\ref{thm:socp_corollary}]
	By using Lemma~\ref{lem:xhat_upper}, we know that 
	\[
		\text{RHS of \eqref{eq:thm_socp_distance_lower_highr}} \leq \sqrt{\frac{2L_s^3}{r \alpha_s^3}} \|M^*\|_F \tr(M^*)
	\]
	Hence, it is enough to make the RHS of the above inequality to be less than that of \eqref{eq:thm_local_region_range}, meaning that
	\[
		\sqrt{\frac{2L_s^3}{r \alpha_s^3}} \|M^*\|_F \tr(M^*) \leq \frac{4L_s \alpha_s}{(L_s+\alpha_s)^2} \lambda_r(M^*) \implies \|M^*\|_F \frac{\tr(M^*)}{\lambda_r(M^*)} \leq \frac{2\sqrt{2r} \alpha_s^{5/2}}{(L_s+\alpha_s)^2 \sqrt{L_s}}
	\]
	Then, acknowledging that $\tr(M^*) \leq r \tau \lambda_r(M^*)$ completes the proof.
\end{proof}

\newpage
\section{Additional Details for Implicit Bias of GD in Tensor Space}\label{sec:app_implicit}

\subsection{More Tensor Algebra} \label{sec:app_implicit_tensor}
\begin{definition}\label{def:tensor_norm}
	Given a cubic tensor $\mb{w} \in \RR^{n \circ l}$, its spectral norm $\|\cdot\|_S$ and nuclear norm $\|\cdot\|_*$ are defined respectively as
	\begin{align*}
		\|\mb{w}\|_* &= \inf \left\{ \sum_{j=1}^{r_m} |\lambda_j|: \mb{w} = \sum_{j=1}^{r_m} \lambda_j w_j^{\otimes l}, \ \|w_j\|_2 = 1, w_j \in \RR^{n} \right\} \\
		\|\mb{w}\|_S &= \sup \left\{| \langle \mb{w}, u^{\otimes l} \rangle |\ \|u\|_2 = 1, u \in \RR^{n} \right\}
	\end{align*}
\end{definition}
From the definition, it also follows that 
\[
	\|\mb{w}\|_S \leq \|\mb{w}\|_*
\]
The above definitions are similar to those for their matrix counterparts. However, unlike the spectral norm of matrices, the spectral norm of tensors are not tensor norms, namely that they do not obey
\[
	\|\langle \mb{w}, \mb{v} \rangle\|_S \leq \|\mb{w}\|_S \|\mb{v}\|_S
\]
in general. Conversely, the nuclear norm is a valid tensor norm, and we have the following property:
\begin{lemma}[Theorem 2.1, 3.2 \cite{qi2019tensor}]\label{lem:tensor_norm_upper}
	For tensors $\mb{w}$ and $\mb{v}$ of appropriate dimensions (if doing inner product, the dimensions along which the multiplication is performed must have matching size), we have
	\begin{align*}
		\|\langle \mb{w}, \mb{v} \rangle\|_S &\leq \|\mb{w}\|_S \|\mb{v}\|_* \\
		\|\langle \mb{w}, \mb{v} \rangle\|_* &\leq \|\mb{w}\|_* \|\mb{v}\|_*
	\end{align*}
\end{lemma}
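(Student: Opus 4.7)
The plan is to prove both inequalities by repeatedly exploiting the infimum definition of the nuclear norm, combined with the multilinearity of the tensor inner product. Fix $\delta > 0$ and choose a nuclear-norm decomposition $\mb{v} = \sum_j \lambda_j \mb{v}_j$ in which each $\mb{v}_j$ is a unit-norm rank-1 tensor and $\sum_j |\lambda_j| \leq \|\mb{v}\|_* + \delta$. By multilinearity of the contraction, $\langle \mb{w}, \mb{v}\rangle = \sum_j \lambda_j \langle \mb{w}, \mb{v}_j\rangle$, so for either norm $\|\cdot\| \in \{\|\cdot\|_S, \|\cdot\|_*\}$ the triangle inequality gives $\|\langle \mb{w}, \mb{v}\rangle\| \leq \sum_j |\lambda_j| \cdot \|\langle \mb{w}, \mb{v}_j\rangle\|$. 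Each inequality in the lemma then reduces to a single-summand bound, after which I will let $\delta \downarrow 0$.

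For the first inequality, the key claim is that $\|\langle \mb{w}, \mb{v}_j\rangle\|_S \leq \|\mb{w}\|_S$ whenever $\mb{v}_j$ is unit-norm rank-1. Writing $\mb{v}_j = a_1 \otimes \cdots \otimes a_p$ with $\|a_k\|_2=1$ and contracting the $d$ specified modes against $\mb{w}$, the resulting tensor carries the uncontracted factors $a_{d+1} \otimes \cdots \otimes a_p$ intact as a rank-1 block. Probing $\langle \mb{w}, \mb{v}_j\rangle$ against an arbitrary unit rank-1 test tensor $u_1 \otimes \cdots \otimes u_{l+p-2d}$ collapses to a product of the $p-d$ scalar factors $\langle a_k, u_{(\cdot)}\rangle$, each bounded by $1$ in absolute value by Cauchy-Schwarz, multiplied by a term of the form $\langle \mb{w}, u_1 \otimes \cdots \otimes u_{l-d} \otimes a_1 \otimes \cdots \otimes a_d\rangle$, whose magnitude is at most $\|\mb{w}\|_S$ directly from the spectral-norm definition (the probe is a unit rank-1 tensor). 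Taking the supremum over probes and summing against $|\lambda_j|$ yields $\|\langle \mb{w}, \mb{v}\rangle\|_S \leq (\|\mb{v}\|_* + \delta)\|\mb{w}\|_S$.

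For the second inequality, apply the same decomposition trick to $\mb{w}$ as well: write $\mb{w} = \sum_k \mu_k \mb{w}_k$ with unit-norm rank-1 $\mb{w}_k$ and $\sum_k |\mu_k| \leq \|\mb{w}\|_* + \delta$. Then $\langle \mb{w}, \mb{v}\rangle = \sum_{j,k} \mu_k \lambda_j \langle \mb{w}_k, \mb{v}_j\rangle$, and the contraction of two rank-1 tensors is again a rank-1 tensor (the outer product of the uncontracted factors) scaled by a product of inner products of unit vectors, hence a scalar of magnitude at most $1$ by Cauchy-Schwarz. Consequently $\|\langle \mb{w}_k, \mb{v}_j\rangle\|_* \leq 1$ directly from the nuclear-norm definition, and the triangle inequality delivers $\|\langle \mb{w}, \mb{v}\rangle\|_* \leq (\|\mb{w}\|_* + \delta)(\|\mb{v}\|_* + \delta)$; sending $\delta \downarrow 0$ completes the proof.

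The main obstacle is bookkeeping: being explicit about which modes of $\mb{w}$ are contracted against which modes of $\mb{v}$, and verifying uniformly across orders $l,p$ and contraction depth $d$ that ``contracting a tensor with a unit vector along one mode does not increase either norm''. Once this mode-matching is set up carefully, both inequalities are driven by the same three ingredients --- multilinearity of $\langle\cdot,\cdot\rangle$, the triangle inequality, and Cauchy-Schwarz on the scalar factors produced by rank-1 contractions --- with the infimum structure of $\|\cdot\|_*$ closing the argument.
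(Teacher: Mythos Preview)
The paper does not supply a proof of this lemma; it is quoted directly from Theorems~2.1 and~3.2 of \cite{qi2019tensor}. Your argument is the standard one and matches what that reference does: expand $\mb{v}$ (and, for the second inequality, also $\mb{w}$) into a near-optimal nuclear-norm decomposition, push the contraction through by multilinearity, and reduce to the rank-1 case where Cauchy--Schwarz handles the scalar factors.

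One point worth flagging is a mismatch between your proof and the paper's Definition~\ref{def:tensor_norm}, which restricts both norms to \emph{cubic} tensors and uses only \emph{symmetric} rank-1 probes $u^{\otimes l}$ and symmetric rank-1 atoms $w_j^{\otimes l}$. Your argument instead uses general rank-1 probes $u_1\otimes\cdots\otimes u_{l+p-2d}$ and general rank-1 atoms $a_1\otimes\cdots\otimes a_p$. That is the correct move---the lemma is stated for ``tensors of appropriate dimensions'', and a partial contraction of two cubic tensors is typically not cubic, so the symmetric definitions cannot be what is meant here (and indeed the cited source works with the general, non-symmetric norms). Just be aware that under the paper's literal Definition~\ref{def:tensor_norm} your bound $|\langle \mb{w}, u_1\otimes\cdots\otimes u_{l-d}\otimes a_1\otimes\cdots\otimes a_d\rangle|\le \|\mb{w}\|_S$ would not follow, since the probe is not of the form $u^{\otimes l}$; the argument is sound precisely because the intended norms are the general ones from \cite{qi2019tensor}.
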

Moreover, they have a dual norm relationship:
\begin{lemma}[Lemma 21 \cite{lim2013blind}] \label{lem:dual_tensor_norm}
	The spectral norm $\|\cdot\|_S$ is the dual norm to the nuclear norm $\|\cdot\|_*$, namely given an arbitrary tensor $\mb{w}$, we have that 
	\[
		\|\mb{w}\|_S = \sup_{\|\mb{v}\|_* \leq 1} | \langle \mb{w}, \mb{v} \rangle |  
	\] 
	with $\mb{v}$ having the same dimensions as $\mb{w}$.
\end{lemma}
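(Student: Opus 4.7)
The plan is to establish the claimed identity by proving two matching inequalities: $\|\mb{w}\|_S \le \sup_{\|\mb{v}\|_* \le 1} |\langle \mb{w}, \mb{v} \rangle|$ and $\sup_{\|\mb{v}\|_* \le 1} |\langle \mb{w}, \mb{v} \rangle| \le \|\mb{w}\|_S$. This is the canonical route for dual-norm statements; the only tensor-specific ingredients are the two formulas given in Definition \ref{def:tensor_norm} and the fact that both norms are defined using the same class of symmetric rank-1 atoms $u^{\otimes l}$ with $\|u\|_2 = 1$.

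For the first direction, I would pick any unit vector $u \in \RR^n$ and use the single-term decomposition $u^{\otimes l} = 1 \cdot u^{\otimes l}$, which is feasible in the infimum defining $\|\cdot\|_*$ and certifies $\|u^{\otimes l}\|_* \le 1$. Hence $\mb{v} = u^{\otimes l}$ (and also $\mb{v} = -u^{\otimes l}$) is a feasible point for the supremum, so $|\langle \mb{w}, u^{\otimes l} \rangle| \le \sup_{\|\mb{v}\|_* \le 1} |\langle \mb{w}, \mb{v} \rangle|$. Taking the supremum of the left-hand side over all unit vectors $u$ and invoking the spectral-norm definition gives $\|\mb{w}\|_S \le \sup_{\|\mb{v}\|_* \le 1} |\langle \mb{w}, \mb{v} \rangle|$.

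For the reverse direction, I would argue as follows. Fix any $\varepsilon > 0$ and any $\mb{v}$ with $\|\mb{v}\|_* \le 1$. By the infimum definition of the nuclear norm, there exists a symmetric rank-1 decomposition $\mb{v} = \sum_{j=1}^{r_m} \lambda_j v_j^{\otimes l}$ with $\|v_j\|_2 = 1$ and $\sum_j |\lambda_j| \le \|\mb{v}\|_* + \varepsilon \le 1 + \varepsilon$. Linearity of the inner product together with the triangle inequality then gives
\[
|\langle \mb{w}, \mb{v} \rangle| \le \sum_j |\lambda_j|\,|\langle \mb{w}, v_j^{\otimes l} \rangle| \le \Bigl(\sum_j |\lambda_j|\Bigr)\|\mb{w}\|_S \le (1+\varepsilon)\|\mb{w}\|_S,
\]
where the middle inequality uses $|\langle \mb{w}, v_j^{\otimes l}\rangle| \le \|\mb{w}\|_S$ from the spectral-norm definition. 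Taking the supremum over feasible $\mb{v}$ and then sending $\varepsilon \to 0$ yields the matching inequality.

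The only real subtlety is that the nuclear norm is defined as an infimum which may not be attained, so I cannot extract an ``optimal'' decomposition directly; the $\varepsilon$-slack argument above circumvents this cleanly. A secondary point is that the supremum in the spectral-norm definition is taken over the unit sphere in $\RR^n$, which is compact, so it is attained and the inequality $|\langle \mb{w}, v_j^{\otimes l}\rangle| \le \|\mb{w}\|_S$ is immediate. Combining both directions produces the claimed identity $\|\mb{w}\|_S = \sup_{\|\mb{v}\|_* \le 1} |\langle \mb{w}, \mb{v}\rangle|$.
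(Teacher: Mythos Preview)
Your proof is correct and follows the standard dual-norm argument. The paper does not supply its own proof of this lemma; it is stated with a citation to \cite{lim2013blind} (Lemma 21) and used as a black box, so there is nothing in the paper to compare against beyond noting that your two-inequality route is the canonical one.
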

Next, we introduce the notion of eigenvalues for tensors. There are many related definitions, like outlined in \cite{qi2012spectral}. However, we introduce a novel variational characterization of eigenvalues that resembles the Courant-Fisher minimax definition for eigenvalues of matrices. Note this is a new definition that is fist introduced in this paper, and may be of independent interest outside of the current scope.
\begin{definition*}[Definition 4, Variational Eigenvalue of Tensors]
	For a given tensor $\mb{w} \in \RR^{n \circ l}$, we define its $k^{th}$ variational eigenvalue (v-Eigenvalue) $\lambda_k^v(\mb{w})$ as
	\[
		\lambda_k^v(\mb{w}) \coloneqq \max_{\substack{S \\ \dim(S)=k}} \min_{\mb{u} \in S} \frac{|\langle \mb{w}, \mb{u} \rangle |}{\|\mb{u}\|^2_F}, \quad k \in [n]
	\]
	where $S$ is a subspace of $\RR^{n \circ l}$ that is spanned by a set of orthogonal, symmetric, rank-1 tensors. Its dimension denotes the number of orthogonal tensors that span this space. 
\end{definition*}
It is apparent from the definition that $\|\mb{w}\|_S = \lambda_1^v(\mb{w})$. Note that our definition of v-Eigenvalues of tensors can only define $n$ eigenvalues at most, which is not the maximum amount of H- or Z-Eigenvalues a tensor can have \cite{qi2012spectral}, and it is well known that even with symmetric tensors, its rank can go well beyond $n$  \cite{comon2008symmetric}. We also note that this definition exactly coincides with the definition of Hermitian tensor eigenvalues (introduced here \cite{ni2019hermitian}) when constrained to Hermitian tensors \cite{chang2021hanson}. We also conjecture that this definition coincides with the top-n Z-Eigenvalues for even-order symmetric real tensors \cite{qi2012spectral}, but it is an open question for now.

Using the definition of v-Eigenvalues, we can also obtain an equivalent characterization, just like the Courant-Fisher definition for matrix eigenvalues, which helps us in proving a tensor version of Weyl's inequality:
\begin{proposition}\label{prop:eigen_equi}
	For an integer $k$ in $[1,\dots,n]$, the $k^{th}$ variational eigenvalue (v-Eigenvalue) $\lambda_k^v(\mb{w})$ of a tensor $\mb{w}$ satisfies:
	\[
		\lambda_k^v(\mb{w}) = \min_{\substack{T \\ \dim(T)=n-k+1}} \max_{\mb{u} \in T} \frac{|\langle \mb{w}, \mb{u} \rangle |}{\|\mb{u}\|^2_F} = \max_{\substack{S \\ \dim(S)=k}} \min_{\mb{u} \in S} \frac{|\langle \mb{w}, \mb{u} \rangle |}{\|\mb{u}\|^2_F}
	\]

\end{proposition}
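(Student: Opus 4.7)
The second equality is immediate from Definition~\ref{def:v_eigenvalues}, so the task reduces to establishing the first equality. I would adapt the Courant--Fischer minimax argument for matrix eigenvalues to the tensor setting, where admissible subspaces must be spanned by orthogonal symmetric rank-1 tensors. As a first step, I would parametrize each admissible $S$ (resp.\ $T$) by a tuple of orthonormal vectors $\{u_1,\dots,u_k\}$ (resp.\ $\{v_1,\dots,v_{n-k+1}\}$) in $\RR^n$ via the map $u \mapsto u^{\otimes l}$, so that both sides become extrema of a continuous function on a compact Stiefel-type manifold; this also justifies normalizing to $\|\mb{u}\|_F = 1$ and removes the scale indeterminacy of the ratio. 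With this parametrization, existence of the outer max and min follows from continuity plus compactness.

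For the inequality $\max_S\min \leq \min_T\max$, I would fix arbitrary admissible $S$ and $T$ and use a dimension-counting argument \emph{in} $\RR^n$ rather than in the tensor space: because $k + (n-k+1) = n+1 > n$, the underlying vector subspaces $U = \mathrm{span}(u_i)$ and $V = \mathrm{span}(v_j)$ of $\RR^n$ intersect in a nonzero unit vector $z$. I would then use $z^{\otimes l}$ as a common test tensor, writing $z$ in both the $\{u_i\}$ and $\{v_j\}$ bases to chain $\min_{\mb{u} \in S}(\cdot) \le (\cdot)|_{z^{\otimes l}} \le \max_{\mb{u} \in T}(\cdot)$; the desired inequality follows upon taking the max over $S$ and the min over $T$. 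For the reverse inequality, I would construct an optimal $S^*$ greedily by iteratively selecting orthonormal vectors $w_1^*,\dots,w_k^*$ in $\RR^n$ that maximize $|\langle \mb{w}, w_i^{\otimes l}\rangle|$ subject to orthogonality with prior selections, and then show that any complementary orthonormal extension furnishes a matching admissible $T^*$ attaining the min-max value; a duality argument at this saddle point equalizes the two expressions.

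\textbf{Main obstacle.} The critical step is the tensor analog of the dimension-counting argument. For matrices ($l=1$) the Grassmannian structure immediately supplies a nontrivial intersection element lying inside both subspaces as linear subspaces, but for $l \geq 2$ the admissible subspaces in $\RR^{n \circ l}$ do not form a Grassmannian and the tensor $z^{\otimes l}$ built from a vector $z \in U \cap V$ is generally not a linear combination of the $\{u_i^{\otimes l}\}$ (nor of the $\{v_j^{\otimes l}\}$). To make the argument rigorous, I expect to restrict the inner $\min$ and $\max$ to symmetric rank-1 test tensors within each admissible subspace, which is the natural class consistent with the variational definition; verifying that this restriction does not change the optimal values, and establishing the Hermitian-type correspondence between rank-1-spanned subspaces of $\RR^{n \circ l}$ and ordinary subspaces of $\RR^n$, is the main technical difficulty, and is where Lemma~\ref{lem:dual_tensor_norm} together with the symmetry of the admissible test set will likely play a key role.
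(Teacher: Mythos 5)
Your plan correctly reduces the problem to a Courant--Fischer dimension-counting argument carried out through the underlying vectors in $\RR^n$, and you have put your finger on precisely the right obstruction: for $l \ge 2$, a vector $z \in U \cap V$ does not give a tensor $z^{\otimes l}$ lying in $\mathrm{span}\{u_i^{\otimes l}\}$ or $\mathrm{span}\{v_j^{\otimes l}\}$, so $z^{\otimes l}$ cannot be used as a common test point in both $S$ and $T$. Since you leave this unresolved (and the reverse-inequality ``greedy + duality'' sketch is also not worked out), the proposal as written has a genuine gap. The paper does not fill this gap by the route you suggest; it proceeds by \emph{contradiction} rather than by proving the two inequalities directly. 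Concretely, the paper supposes the two sides differ, asserts that the inner $\min$ over $S$ and inner $\max$ over $T$ are each attained at one of the rank-1 \emph{spanning} tensors (call them $u_k^{\otimes l}$ and $u_{-(n-k+1)}^{\otimes l}$), notes $u_k$ and $u_{-(n-k+1)}$ must then be linearly independent, and decomposes $u_k = \xi_1 u_{-(n-k+1)} + \xi_2 u_{-(n-k+1)}^{\perp}$ with $\xi_2 \ne 0$. Expanding $u_k^{\otimes l}$, the cross term $(u_{-(n-k+1)}^{\perp})^{\otimes l}$ is shown to be orthogonal to the $k-1$ remaining spanning tensors of $S$, to $u_{-(n-k+1)}^{\otimes l}$, and not in the span of the rest of $T$, yielding $n+1$ mutually orthogonal symmetric rank-1 tensors in $\RR^{n\circ l}$. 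Since $\langle u^{\otimes l}, v^{\otimes l}\rangle = (u^\top v)^{l}$, orthogonality of the tensors forces orthogonality of the underlying vectors, so one cannot have more than $n$ of them --- contradiction. The structural difference is that the paper never needs a tensor $z^{\otimes l}$ to be a \emph{member} of either span; it only uses orthogonality relations among rank-1 tensors, which translate cleanly to $\RR^n$ via the inner-product identity above.

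Two cautionary notes for you. First, the paper's claim that the inner $\min$/$\max$ over a tensor subspace $S$ is attained at a rank-1 spanning tensor is the mirror image of your own difficulty: over a subspace $S = \mathrm{span}\{\mb{u}_1,\dots,\mb{u}_k\}$ with $a_i = \langle \mb{w}, \mb{u}_i\rangle$, the quantity $|\langle \mb{w}, \mb{u}\rangle|/\|\mb{u}\|_F^2$ behaves like $|\sum_i c_i a_i|/\sum_i c_i^2$ rather than a convex combination of the $a_i$, so the argmin need not sit at a basis element. So the lacuna you flagged does not magically disappear in the paper's route; it is just relocated. Second, your ``restrict the inner optimization to symmetric rank-1 test tensors in the span'' suggestion is delicate because, for $l \ge 2$, the only symmetric rank-1 tensors inside $\mathrm{span}\{u_1^{\otimes l},\dots,u_k^{\otimes l}\}$ are typically (scalar multiples of) the $u_i^{\otimes l}$ themselves, which collapses the inner optimization to a finite minimum/maximum over $\{|a_i|\}$ --- a strictly different quantity from what Definition~\ref{def:v_eigenvalues} specifies. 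If you want to pursue your direct (non-contradiction) route, the cleanest fix is to argue via the underlying vectors throughout, i.e.\ replace the tensor-span membership requirement by ``$\mb{u} = u^{\otimes l}$ with $u \in \mathrm{span}\{u_1,\dots,u_k\}$,'' and then justify separately that this reparametrization does not change the optimal values --- which is, as you say, the crux, and is not proved by either you or the paper.
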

\begin{proof}[Proof of Proposition~\ref{prop:eigen_equi}]
	We prove the proposition by contradiction. Assume that the two formulations claimed to be identical in Proposition~\ref{prop:eigen_equi} are not the same. We further assume that $S$ is spanned by symmetric, rank-1 tensors $\{ \mb{u}_1, \dots, \mb{u}_k \}$, and that $T$ is spanned by symmetric, rank-1 tensors $\{ \mb{u}_{-(n-k+1)}, \dots, \mb{u}_{-1} \}$, meaning that 
	\[
		\langle \mb{w}, \mb{u}_k \rangle \neq  \langle \mb{w}, \mb{u}_{-(n-k+1)} \rangle
	\]
	assuming that $\mb{u}_k$ and $\mb{u}_{-(n-k+1)}$ are the \emph{inner} argmin and argmax of their respective formulations with norm 1. Since they have to be rank-1 tensors (if not we can decrease the proportion of orthogonal elements with higher or lower $|\langle \mb{w}, \mb{u} \rangle |$ values), it is possible to denote 
	\[
		\mb{u}_k = u_k^{\otimes l}, \quad \mb{u}_{-(n-k+1)} = u_{-(n-k+1)}^{\otimes l} \ \text{where}\  u_k, u_{-(n-k+1)} \in \RR^n
	\]
	We also know that $u_k$ and $u_{-(n-k+1)}$ are linearly independent, as otherwise $\mb{u}_k$ and $\mb{u}_{-(n-k+1)}$ will have the same inner product with $\mb{w}$. Thus, assume 
	\[
		u_k = \xi_1 u_{-(n-k+1)} + \xi_2 u_{-(n-k+1)}^\perp, \quad \xi_2 \neq 0.
	\]
	It follows that 
	\[
		\mb{u}_k = \xi_1^l u_{-(n-k+1)}^{\otimes l} + \xi_2^l (u_{-(n-k+1)}^\perp)^{\otimes l} + \underbrace{\dots \dots}_{\text{other non-symmetric terms}}
	\]
	Denote $(u_{-(n-k+1)}^\perp)^{\otimes l} \coloneqq \mb{u}_{k+1}$. Now, it follows from definition that 
	\[
		\mb{u}_{k+1} \perp \{ \mb{u}_1, \dots, \mb{u}_{k-1} \}
	\]
	and also 
	\[
		\mb{u}_{k+1} \notin \text{span}\{ \mb{u}_{-(n-k)}, \dots, \mb{u}_{-1} \}
	\]
	as otherwise the \emph{outer} maximization formulation affecting the choice of $u_k$ will make $\xi_2 = 0$, contradicting our claim. By definition we have $\text{span}\{ \mb{u}_{1}, \dots, \mb{u}_{k} \} \bigcap \text{span}\{ \mb{u}_{-(n-k)}, \dots, \mb{u}_{-1} \}= \{ \emptyset \}$.
	
	In summary we have that $\mb{u}_{k+1} \perp \mb{u}_{-(n-k+1)}, \{ \mb{u}_1, \dots, \mb{u}_{k-1} \}, \{ \mb{u}_{-(n-k)}, \dots, \mb{u}_{-1}  \}$, meaning that we have obtained $n+1$ symmetric rank-1 and $n$-dimensional tensors all orthogonal to each other, which is apparently not possible, thus refuting our initial claim.
	\end{proof}
	With this new definition equipped, we proceed to show a tensor version of Weyl's inequality, which is key in our proof as promised.
\begin{lemma}[Tensor Weyl's]\label{lem:tensor_weyl}
	Consider two tensors $\mb{w}$ and $\mb{v}$ of the  same dimension. It holds that 
	\begin{equation}
		\lambda_k^v(\mb{w}) + \lambda_1^v(\mb{v}) \geq \lambda_k^v(\mb{w}+\mb{v}) \geq \lambda_k^v(\mb{w})- \lambda_1^v(\mb{v})
	\end{equation} 
\end{lemma}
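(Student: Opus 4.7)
The plan is to adapt the classical proof of Weyl's inequality for Hermitian matrices, which rests on the Courant--Fischer min--max characterization just established in Proposition~\ref{prop:eigen_equi}. I would first prove the upper bound $\lambda_k^v(\mb{w}+\mb{v}) \leq \lambda_k^v(\mb{w}) + \lambda_1^v(\mb{v})$, and then derive the lower bound from it via a symmetry argument using $\lambda_1^v(-\mb{v}) = \lambda_1^v(\mb{v})$, which holds because the spectral norm (and hence the $k=1$ v-eigenvalue) is defined through an absolute value.

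For the upper bound, I would start from the max--min form of Definition~\ref{def:v_eigenvalues} applied to $\mb{w}+\mb{v}$, and pick orthogonal unit rank-1 symmetric tensors $\{\mb{e}_1^*, \ldots, \mb{e}_k^*\}$ spanning a subspace $S^*$ that attains this outer maximum. Applying the triangle inequality indexwise, $|\langle \mb{w}+\mb{v}, \mb{e}_i^*\rangle| \leq |\langle \mb{w}, \mb{e}_i^*\rangle| + |\langle \mb{v}, \mb{e}_i^*\rangle|$, and then invoking the elementary fact $\min_i(a_i+b_i) \leq \min_i a_i + \max_i b_i$ yields
\begin{equation*}
\lambda_k^v(\mb{w}+\mb{v}) \leq \min_i |\langle \mb{w}, \mb{e}_i^*\rangle| + \max_i |\langle \mb{v}, \mb{e}_i^*\rangle|.
\end{equation*}
The first term is at most $\lambda_k^v(\mb{w})$ because $\{\mb{e}_i^*\}$ is just one feasible configuration in the outer maximization defining $\lambda_k^v(\mb{w})$, while the second is at most $\lambda_1^v(\mb{v})$ because each $\mb{e}_i^*$ is a unit rank-1 symmetric tensor and therefore feasible in the spectral norm supremum. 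The lower bound then follows by applying this upper bound to the splitting $\mb{w} = (\mb{w}+\mb{v}) + (-\mb{v})$ and rearranging, giving $\lambda_k^v(\mb{w}) \leq \lambda_k^v(\mb{w}+\mb{v}) + \lambda_1^v(\mb{v})$.

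The main delicate point will be anchoring the inner minimization in Definition~\ref{def:v_eigenvalues} to its intended meaning. Read literally as an unconstrained minimum over all $\mb{u} \in S$, the ratio $|\langle \mb{w}, \mb{u}\rangle|/\|\mb{u}\|_F^2$ can be driven to zero whenever $k \geq 2$ by choosing coefficients along the spanning basis that cancel the projection of $\mb{w}$. The interpretation implicitly adopted in the proof of Proposition~\ref{prop:eigen_equi}---that the min is effectively taken over the rank-1 symmetric generators of $S$---is what makes both the max--min and min--max forms well-posed, and is exactly what is needed to bound $|\langle \mb{v}, \mb{e}_i^*\rangle|$ by $\|\mb{v}\|_S$ in the step above. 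Once this convention is fixed, the argument is a clean tensor port of the matrix proof.
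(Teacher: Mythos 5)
The paper does not actually supply a proof of this lemma: it states only that the argument is ``highly similar to that of Theorem 2 in \cite{chang2021hanson}, only substituting for our new definition of v-Eigenvalues, thus omitted for simplicity.'' Your proof fills that gap correctly, and it is the standard Weyl-type argument that the paper is implicitly deferring to: take an optimal orthogonal rank-1 symmetric spanning set $\{\mb{e}_i^*\}$ for $\mb{w}+\mb{v}$, split by the triangle inequality, use $\min_i(a_i+b_i)\le \min_i a_i + \max_i b_i$, bound the first term by feasibility in the outer maximum defining $\lambda_k^v(\mb{w})$ and the second by feasibility in the spectral-norm supremum, and then recover the lower bound from the splitting $\mb{w}=(\mb{w}+\mb{v})+(-\mb{v})$ with $\lambda_1^v(-\mb{v})=\lambda_1^v(\mb{v})$.

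Your caveat about Definition~\ref{def:v_eigenvalues} is a genuine and valuable observation, not just a formality. Read literally, the inner minimum is over all nonzero $\mb{u}\in S$, and since the ratio $|\langle\mb{w},\mb{u}\rangle|/\|\mb{u}\|_F^2$ is homogeneous of degree $-1$ in $\mb{u}$, the infimum is trivially $0$ by sending $\|\mb{u}\|_F\to\infty$; and even after normalizing $\|\mb{u}\|_F=1$, for $k\ge 2$ one can pick $\mb{u}\in S$ orthogonal to $\mb{w}$ and force the minimum to $0$. The convention you adopt --- that the minimum ranges over the orthogonal rank-1 symmetric generators of $S$ --- is exactly the one the paper uses implicitly in the proof of Proposition~\ref{prop:eigen_equi} (where the inner argmin is asserted to be a rank-1 tensor), and it is precisely what lets you bound $\max_i|\langle\mb{v},\mb{e}_i^*\rangle|$ by $\|\mb{v}\|_S$. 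With that convention fixed, every step checks out.
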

The proof of Lemma~\ref{lem:tensor_weyl} is highly similar to that of Theorem 2 in \cite{chang2021hanson}, only substituting for our new definition of v-Eigenvalues, thus omitted for simplicity.

\subsection{Main Results and Their Proofs} \label{sec:app_implicit_proofs}
\emph{Note that in this section some tensor inner products will be written as if they were matrices for clarity of writing, and some subscripts for inner-products will be dropped when obvious. If two tensors in $\RR^{nr \circ 2l}$ are multiplied together, then the even dimensions of the first tensor will be inner-producted with the odd dimensions of the second tensor. When a tensor in $\RR^{nr \circ 2l}$ multiplies with a tensor in $\RR^{nr \circ l}$, then the even dimensions of the first tensor will be inner-producted with all the dimensions of the second tensor.}

We start with the proof to Lemma~\ref{lem:gd_sym}.
\begin{proof}[Proof of Lemma~\ref{lem:gd_sym}]
	We proceed with the proof by induction. First, assume that $\mb{w}_0  = x_0^{\otimes l}$ for some $x_0 \in \RR^{nr}$. One can write
	\begin{equation}\label{eq:highr_grad_exp_vec}
		\nabla h^l(\mb{w}_0)  = \langle \langle (I_r \oslash_{1,2} \mb{A})^{\otimes l}, \mb{w}_0 \rangle_{2*[l]}, \langle \mb{A}^{\otimes l}, \mathcal{M}(\mb{w}_0) - \mathcal{M}(\vecc(Z)^{\otimes l})\rangle \rangle_{1,3,\dots,2l-1}
	\end{equation}
	where $\mathcal{M}(\cdot)$ is defined per proof of Lemma~\ref{lem:cp_lifted_highr}. The difference between this formulation and \eqref{eq:highr_grad_exp} is that we have replaced $\langle  \mb{A}^{\otimes l}, \mb{P}(\mb{w}_0) \rangle_{2*[l]}$ with $\langle (I_r \oslash_{1,2} \mb{A})^{\otimes l}, \mb{w}_0 \rangle_{2*[l]}$, which are equivalent, just with the second tensor having the dimensions $nr,m,\dots,nr,m$ so that $\nabla h^l(\mb{w}_0)$ has the dimensions $nr, \dots , nr$. Note that $\oslash$ denotes the usual  kronecker product, which can be thought of a reshaped version of tensor outer product. $\oslash_{1,2}$ denotes the kronecker product only happening with respect to the first 2 dimensions of $\mb{A}$. From now on, we denote $\mb{A}_r \coloneqq I_r \oslash_{1,2} \mb{A}$.
	
	Now, according to the above formulation and Lemma~\ref{lem:kron_iden}, we have
	\begin{equation}\label{eq:h_l_grad_express}
		\begin{aligned}
			\nabla h^l(\mb{w}_0) &= \left( \langle \mb{A}_r, \langle \mb{A}, \mat(x_0) \mat(x_0)^\top - M^* \rangle \rangle_{3,6,\dots,3l} \ x_0 \right)^{\otimes l}\\
		&\coloneqq (\langle  \mb{A}_r^* \mb{A}, \mat(x_0) \mat(x_0)^\top - M^* \rangle \ x_0)^{\otimes l}
		\end{aligned}
	\end{equation}
	where 
	\begin{equation}\label{eq:A_r_def}
		(\mb{A}^l_r)^* \mb{A}^l \coloneqq \langle (\mb{A}_r)^{\otimes l}, \mb{A}^{\otimes l} \rangle_{3,6,\dots,3l} \in \RR^{[nr \times nr \times n \times n] \circ l}
	\end{equation}
	Now, $\langle  \mb{A}_r^* \mb{A}, \mat(x_0) \mat(x_0)^\top - M^* \rangle$ is an $nr \times nr$ matrix, so the above tensor is simply a vector outer product, being symmetric by definition. Consequently, $\mb{w}_1 = \mb{w}_0 - \eta \nabla h^l(\mb{w}_0)$ is still symmetric, since the addition of symmetric tensors maintains symmetric property. This completes the proof of the initial step.
	
	Then, we proceed to show the induction step. Assume that  $\mb{w}_{t-1}$ is symmetric, meaning that
	\[
		\mb{w}_{t-1} = \sum_{j=1}^{r_m} \lambda_j (x^{t-1}_j)^{\otimes l}, \quad x^{t-1}_j \in \RR^{nr}
	\]
	where $r_m$ is the symmetric rank of $\mb{w}_{t-1}$. This means that 
	\begin{align*}
		\nabla h^l(\mb{w}_{t-1}) =   &\sum_{j_1,j_2,j_3}^{r_m ,r_m, r_m} \lambda_{j_1} \lambda_{j_2} \lambda_{j_3} (\langle  \mb{A}_r^* \mb{A}, \mat(x^{t-1}_{j_1}) \mat(x^{t-1}_{j_2})^\top \rangle x^{t-1}_{j_3})^{\otimes l}  - \\ &\sum_{j_3}^{r_m } \lambda_{j_3} (\langle  \mb{A}_r^* \mb{A}, M^* \rangle x^{t-1}_{j_3} )^{\otimes l}
	\end{align*}
	which again is a weighted sum of rank-1 symmetric tensors, thus being symmetric. This shows that $\mb{w}_t = \mb{w}_{t-1} - \eta \nabla h^l(\mb{w}_{t-1})$ is also symmetric, concluding the induction step, thereby proving the claim.
\end{proof}

Next, we show the breakdown of tensors along the GD trajectory
\begin{lemma}\label{lem:traj_breakdown}
	The GD trajectory of \eqref{eq:lifted_main_highr} $\{ \mb{w}_{t}\}_{t=0}^{\infty}$ admits the following breakdown for an arbitrary $t$:
	\begin{equation}\label{eq:wt_breakdown}
		\mb{w}_{t+1} = \langle \mb{Z}_t, \mb{w}_0 \rangle - \mb{E}_t \coloneqq \mb{\tilde w}_t - \mb{E}_t
	\end{equation}
	where 
	\begin{align*}
		\mb{Z}_t &\coloneqq  (\mathcal{I}+ \eta \langle (\mb{A}^l_r)^* \mb{A}^l, (M^*)^{\otimes l} \rangle)^t \\
		\mb{E}_t &\coloneqq \sum_{i=1}^t (\mathcal{I}+ \eta \langle (\mb{A}^l_r)^* \mb{A}^l, (M^*)^{\otimes l} \rangle)^{t-i} \mb{\hat E}_i \\
		 \mb{\hat E}_i &\coloneqq \eta \langle \langle (\mb{A}^l_r)^* \mb{A}^l, \langle \mb{P}(\mb{w}_{i-1}), \mb{P}(\mb{w}_{i-1})\rangle_{2*[l]} \rangle, \mb{w}_{i-1} \rangle_{2*[l]}
	\end{align*}
	and where $(\mb{A}^l_r)^* \mb{A}^l \coloneqq \langle (\mb{A}_r)^{\otimes l}, \mb{A}^{\otimes l} \rangle_{3,6,\dots,3l} \in \RR^{[nr \times nr \times n \times n] \circ l}$.
\end{lemma}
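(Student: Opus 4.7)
The plan is to compute the gradient $\nabla h^l(\mb w_t)$ in closed form, split it into a piece that is linear in $\mb w_t$ and a piece that is cubic in $\mb w_t$, and then unroll the resulting affine recurrence by induction on $t$.

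First, I would derive the gradient by plugging $\mathcal M(\mb w_t) - (M^*)^{\otimes l}$ for the residual in the gradient expression obtained inside the proof of Lemma~\ref{lem:gd_sym} (between \eqref{eq:highr_grad_exp_vec} and \eqref{eq:h_l_grad_express}). Using Lemma~\ref{lem:kron_iden} and the identity $\langle \mb A^{\otimes l},\mb P(\mb w)\rangle_{2*[l]} = \langle \mb A_r^{\otimes l},\mb w\rangle_{2*[l]}$ from that proof, the gradient separates cleanly as
\[
\nabla h^l(\mb w_t) \;=\; \underbrace{\langle\langle (\mb A_r^l)^*\mb A^l, \mathcal M(\mb w_t)\rangle, \mb w_t\rangle_{2*[l]}}_{\mb{\hat E}_{t+1}/\eta} \;-\; \langle (\mb A_r^l)^*\mb A^l, (M^*)^{\otimes l}\rangle\, \mb w_t ,
\]
where the first term is exactly the stated $\mb{\hat E}_{t+1}/\eta$ and the second acts linearly on $\mb w_t$. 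The GD step therefore takes the affine form
\[
\mb w_{t+1} \;=\; \mathcal T\,\mb w_t \;-\; \mb{\hat E}_{t+1}, \qquad \mathcal T \;\coloneqq\; \mathcal I + \eta\,\langle (\mb A_r^l)^*\mb A^l,(M^*)^{\otimes l}\rangle,
\]
and by construction $\mathcal T^{t}\,\mb v = \langle\mb Z_{t},\mb v\rangle$ for every $\mb v \in \RR^{nr\circ l}$.

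Next, I would unroll this recurrence by induction on $t$. The base case is immediate: $\mb w_1 = \mathcal T\mb w_0 - \mb{\hat E}_1 = \langle\mb Z_1,\mb w_0\rangle - \mb E_1$, since $\mb E_1 = \mb{\hat E}_1$ by the summation definition. For the inductive step, applying $\mathcal T$ to the hypothesis $\mb w_t = \langle \mb Z_t,\mb w_0\rangle - \mb E_t$ and subtracting $\mb{\hat E}_{t+1}$ gives
\[
\mb w_{t+1} \;=\; \langle \mb Z_{t+1},\mb w_0\rangle \;-\; \bigl(\mathcal T\,\mb E_t + \mb{\hat E}_{t+1}\bigr),
\]
and the bracketed quantity is precisely $\mb E_{t+1} = \sum_{i=1}^{t+1}\mathcal T^{t+1-i}\mb{\hat E}_i$ by its telescoping definition, completing the induction.

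The main obstacle is not the induction itself, which is purely mechanical, but the tensor-index bookkeeping required to certify the gradient decomposition. In particular, one has to check that $\langle(\mb A_r^l)^*\mb A^l,(M^*)^{\otimes l}\rangle$ really defines a linear endomorphism of $\RR^{nr\circ l}$ whose iterates are well-defined (so that the notation $\mathcal T^t$ and $\mb Z_t$ agree), and that the two hidden copies of $\mb w_t$ inside $\mathcal M(\mb w_t)$ together with the outer factor collapse to exactly the prescribed $\mb{\hat E}_{t+1}$ with the contractions taken along $2*[l]$ on both sides. Once this single-step index alignment is verified via Lemma~\ref{lem:kron_iden}, exactly as was done when reading off symmetry in Lemma~\ref{lem:gd_sym}, the remainder of the argument is formal.
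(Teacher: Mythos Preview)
Your proposal is correct and follows essentially the same route as the paper: write the GD update as $\mb w_{t+1}=\mathcal T\,\mb w_t-\mb{\hat E}_{t+1}$ with $\mathcal T=\mathcal I+\eta\langle(\mb A_r^l)^*\mb A^l,(M^*)^{\otimes l}\rangle$, then unroll by induction, with the only genuine work being the gradient bookkeeping inherited from Lemma~\ref{lem:gd_sym}. Note that your inductive hypothesis $\mb w_t=\langle\mb Z_t,\mb w_0\rangle-\mb E_t$ is exactly what the paper's proof establishes (the statement's indexing $\mb w_{t+1}=\langle\mb Z_t,\mb w_0\rangle-\mb E_t$ is off by one relative to its own proof).
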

\begin{proof}[Proof of Lemma~\ref{lem:traj_breakdown}]
	For this proof, we will proceed by induction. For $t=1$, we have that
	\begin{align*}
		\mb{w}_1 &= (\mathcal{I}+\eta \langle (\mb{A}^l_r)^* \mb{A}^l, (M^*)^{\otimes l} - \langle \mb{P}(\mb{w}_0), \mb{P}(\mb{w}_0)\rangle \rangle)\mb{w}_0 \\
		&= (\mathcal{I}+\eta \langle (\mb{A}^l_r)^* \mb{A}^l, (M^*)^{\otimes l} \rangle) \mb{w}_0 - \eta \langle (\mb{A}^l_r)^* \mb{A}^l, \langle \mb{P}(\mb{w}_0), \mb{P}(\mb{w}_0)\rangle \rangle \mb{w}_0 \\
		&= \langle \mb{Z}_1, \mb{w}_0 \rangle - \mb{E}_1
	\end{align*}
	Then, we move on to the induction step, while first assuming that it holds for some $t$. One can write
	\begin{align*}
		\mb{w}_{t+1} &= (\mathcal{I}+\eta \langle (\mb{A}^l_r)^* \mb{A}^l, (M^*)^{\otimes l} - \langle \mb{P}(\mb{w}_t), \mb{P}(\mb{w}_t)\rangle \rangle)\mb{w}_t \\
		&= (\mathcal{I}+\eta \langle (\mb{A}^l_r)^* \mb{A}^l, (M^*)^{\otimes l} \rangle) \mb{w}_t - \eta \langle (\mb{A}^l_r)^* \mb{A}^l, \langle \mb{P}(\mb{w}_t), \mb{P}(\mb{w}_t)\rangle \rangle \mb{w}_t  \\
		& = (\mathcal{I}+\eta \langle (\mb{A}^l_r)^* \mb{A}^l, (M^*)^{\otimes l} \rangle) \mb{w}_t - \mb{\hat E}_{t+1} \\
		&= (\mathcal{I}+\eta \langle (\mb{A}^l_r)^* \mb{A}^l, (M^*)^{\otimes l} \rangle) \left( \mb{\tilde w}_t - \sum_{i=1}^t (\mathcal{I}+ \eta \langle (\mb{A}^l_r)^* \mb{A}^l, (M^*)^{\otimes l} \rangle)^{t-i} \mb{\hat E}_i \right) - \mb{\hat E}_{t+1} \\
		&= \mb{\tilde w}_{t+1} - \sum_{i=1}^t (\mathcal{I}+ \eta \langle (\mb{A}^l_r)^* \mb{A}^l, (M^*)^{\otimes l} \rangle)^{t+1-i} \mb{\hat E}_i- \mb{\hat E}_{t+1} \\
		&= \mb{\tilde w}_{t+1} - \sum_{i=1}^{t+1} (\mathcal{I}+ \eta \langle (\mb{A}^l_r)^* \mb{A}^l, (M^*)^{\otimes l} \rangle)^{t+1-i} \mb{\hat E}_i \\
		&= \mb{\tilde w}_{t+1} - \mb{E}_t
	\end{align*}
\end{proof}

Following the second step in the main outline, we aim to bound the spectral norm of $\mb{E}_t$, via the next lemma.
\begin{lemma}\label{lem:E_t_spectral}
	Given a tensor $\mb{E}_t$ defined in Lemma~\ref{lem:traj_breakdown},  assume that $\mb{w}_0 = \epsilon x_0^{\otimes l}$, where $\epsilon \in \RR$ is the initialization scale. For every $t \leq t_s$,
	\begin{equation}\label{eq:E_t_spectral}
		\|\mb{E}_t\|_S \leq \frac{8}{r_U^l \sigma_1(U)^l} \epsilon^3 (n L_s)^{l/2} (1+\tilde \eta \sigma_1(U)^l)^{3t} \|x_0^{\otimes l}\|_*^3
	\end{equation}
	with 
	\begin{equation}\label{eq:ts_bound}
		t_s = \floor{\frac{\ln\left( \frac{\sigma_1^l(U)r_U^l}{8 r^l L_s^{l/2} \|x_0^{\otimes l}\|_*^3} \frac{|x_0^\top v_1|^l}{ n^{l/2}}\right)-2\ln(\epsilon)}{2\ln(1+\tilde \eta \sigma_1^l(U))}}
	\end{equation}
	where $U = \langle \mb{A}_r^* \mb{A},M^*\rangle \in \RR^{nr \times nr}$, $r_U$ being the rank of $U$, and $\tilde \eta = r_U^l \eta$. $\sigma_1(U)$ denotes the largest singular value of $U$, and $v_1$ being its associated singular vector. 
\end{lemma}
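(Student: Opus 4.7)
The plan is to combine the triangle inequality for the spectral norm, the sub-multiplicativity from Lemma \ref{lem:tensor_norm_upper}, and an inductive bootstrap to establish the claimed bound. Setting $\mathcal{L} \coloneqq \mathcal{I}+ \eta \langle (\mb{A}^l_r)^* \mb{A}^l, (M^*)^{\otimes l} \rangle$, which after contracting the inner sensing dimensions is $\mathcal{I} + \eta U^{\otimes l}$, the operator norm of $\mathcal{L}^k$ acting on symmetric tensors is bounded by $(1+\tilde \eta \sigma_1(U)^l)^k$, with the factor $\tilde \eta = r_U^l \eta$ absorbing the nuclear-spectral scaling implicit in the Kronecker structure. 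Together with $\|\mb{E}_t\|_S \leq \sum_{i=1}^t \|\mathcal{L}^{t-i} \mb{\hat E}_i\|_S$, this reduces everything to bounding each $\|\mb{\hat E}_i\|_S$.

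Next, I would bound $\mb{\hat E}_i$ cubically in $\mb{w}_{i-1}$. Since $\mb{\hat E}_i$ has the shape ``(sensing tensor) times $\mb{w}_{i-1}$ three times'', applying Lemma \ref{lem:tensor_norm_upper} together with the RSS condition yields $\|\mb{\hat E}_i\|_S \leq \eta (n L_s)^{l/2} \|\mb{w}_{i-1}\|_*^3$. The first factor comes from bounding the spectral norm of $(\mb{A}^l_r)^* \mb{A}^l$ via the rank-1 RSS constant $L_s$ raised to the $l$-fold tensor product, while the cubic power in $\mb{w}_{i-1}$ is the key mechanism that injects the $\epsilon^3$ scaling.

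The delicate step is the bootstrap. I would induct on $t$, assuming the claimed spectral-norm bound holds for all $j<t$ and using the decomposition $\mb{w}_j = \mb{\tilde w}_j - \mb{E}_j$ of Lemma \ref{lem:traj_breakdown} to deduce $\|\mb{w}_j\|_* \leq 2\epsilon (1+\tilde \eta \sigma_1^l(U))^j \|x_0^{\otimes l}\|_*$. The stopping time $t_s$ is chosen precisely so that $\|\mb{E}_j\|_S \leq \tfrac{1}{2}\|\mb{\tilde w}_j\|$ remains valid throughout $j \leq t_s$; solving that inequality gives the logarithmic expression in \eqref{eq:ts_bound}, including the $|x_0^\top v_1|^l / n^{l/2}$ factor that measures the alignment of $\mb{w}_0$ with the top v-eigendirection of $\mathcal{L}$. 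Plugging the bootstrap bound into the cubic estimate for $\mb{\hat E}_i$, the sum $\sum_{i=1}^t (1+\tilde \eta \sigma_1^l(U))^{t-i}(1+\tilde \eta \sigma_1^l(U))^{3(i-1)}$ collapses geometrically, and the prefactor $8/(r_U^l \sigma_1^l(U))$ in \eqref{eq:E_t_spectral} emerges from both that collapse and the conversion $\eta = \tilde \eta / r_U^l$.

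The hard part will be navigating the gap between the \emph{spectral} norm (what Lemma \ref{lem:E_t_spectral} controls) and the \emph{nuclear} norm (what the cubic estimate on $\mb{\hat E}_i$ uses). My plan is to exploit the fact that every iterate $\mb{w}_j$ is symmetric with bounded symmetric rank by Lemma \ref{lem:gd_sym}, and that $\mb{\hat E}_i$ expands as a weighted sum of a controlled number of rank-1 outer products of the components of $\mb{w}_{i-1}$, so that on the relevant low-rank cone the nuclear and spectral norms differ only by polynomial factors in $r_U$ and $n$ which can be absorbed into the constant $8$ and into $\tilde \eta$. Tracking these polynomial factors tightly enough that the final bound matches the specific constants in \eqref{eq:E_t_spectral} and \eqref{eq:ts_bound}, rather than a looser surrogate, is where the bulk of the bookkeeping will live.
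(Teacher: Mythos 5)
Your high-level skeleton matches the paper's argument: use the triangle inequality on the decomposition of $\mb{E}_t$ from Lemma~\ref{lem:traj_breakdown}, bound each $\mb{\hat E}_i$ cubically in the iterate via $L_s$, bootstrap the nuclear norm of $\mb{w}_j$ via $\|\mb{w}_j\|_* \leq 2\|\mb{\tilde w}_j\|_*$ on $j \leq t_s$, and collapse the resulting geometric sum. However, there is a genuine conceptual gap in how you plan to handle the interplay between spectral and nuclear norms, and your proposed fix for it would not work.

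The tensor spectral norm is \emph{not} sub-multiplicative, as the paper notes after Definition~\ref{def:tensor_norm}, so you cannot bound $\|\mathcal{L}^{t-i}\mb{\hat E}_i\|_S$ by $\|\mathcal{L}^{t-i}\|_S\|\mb{\hat E}_i\|_S$. The correct tool is the mixed bound in Lemma~\ref{lem:tensor_norm_upper}, $\|\langle \mb{w},\mb{v}\rangle\|_S \leq \|\mb{w}\|_S\|\mb{v}\|_*$, which means you should bound $\|\mathcal{L}^{t-i}\|_S$ (spectral) times $\|\mb{\hat E}_i\|_*$ (nuclear). Thus the object you need is the \emph{nuclear} norm of $\mb{\hat E}_i$, and the paper obtains $\|\mb{\hat E}_i\|_* \leq \eta(r^2 nL_s)^{l/2}\|\mb{w}_{i-1}\|_*^3$ directly by expanding $\mb{w}_{i-1}$ in its symmetric nuclear decomposition and bounding each rank-1 contribution (using $\|\langle\mb{A}^*\mb{A},\mat(x_{j_1})\mat(x_{j_2})^\top\rangle\|_* \leq r\sqrt{nL_s}$). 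There is no gap left to close, because a nuclear-norm bound on $\mb{\hat E}_i$ is exactly what the mixed inequality consumes. Your proposed workaround --- claiming that $\mb{w}_j$ has bounded symmetric rank so that spectral and nuclear norms differ by controlled polynomial factors --- is both unnecessary and unsound: nothing in Lemma~\ref{lem:gd_sym} bounds the symmetric rank of the iterates, which can grow rapidly since each gradient step is a cubic map in $\mb{w}$, and the paper's argument deliberately never invokes any rank control on $\mb{w}_j$.

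A secondary issue is the origin of $\tilde\eta = r_U^l\eta$. You locate it in the spectral operator norm of $\mathcal{L}^k$, but the paper shows $\|\mathcal{L}^{t-i}\|_S \leq (1+\eta\sigma_1^l(U))^{t-i}$ with no $r_U$ factor --- for the spectral norm, each $\langle(\mb{A}^l_r)^*\mb{A}^l,(M^*)^{\otimes l}\rangle^k$ contributes only $\sigma_1^{kl}(U)$. The $\tilde\eta$ instead comes from bounding the \emph{nuclear} norm $\|\langle(\mb{A}^l_r)^*\mb{A}^l,(M^*)^{\otimes l}\rangle^k\|_* \leq (r_U^l\sigma_1^l(U))^k$ when controlling $\|\mb{\tilde w}_{i-1}\|_*$ in the bootstrap. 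Conflating these two would still give a valid (looser) upper bound numerically, since $\tilde\eta \geq \eta$, but it obscures where the rank of $U$ actually enters and would make it hard to reproduce the precise constants in \eqref{eq:E_t_spectral} and the expression for $t_s$ in \eqref{eq:ts_bound}.
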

\begin{proof}[Proof of Lemma~\ref{lem:E_t_spectral}]
From Lemma~\ref{lem:tensor_norm_upper} and the definition in Lemma~\ref{lem:traj_breakdown}, it is apparent that 
	\begin{equation}\label{eq:et_norm_upper}
		\|\mb{E}_t\|_S \leq \sum_{i=1}^t \|(\mathcal{I}+ \eta \langle (\mb{A}^l_r)^* \mb{A}^l, (M^*)^{\otimes l} \rangle)^{t-i}\|_S \|\mb{\hat E}_i\|_*
	\end{equation}
	We proceed to derive upper bounds on the norm terms separately, and then combine them together later. We first deal with $\|\mb{\hat E}_{i}\|_*$. By Lemma~\ref{lem:tensor_norm_upper}, we have that
	\[
		\|\mb{\hat E}_i \|_* \leq \eta \|\langle (\mb{A}^l_r)^* \mb{A}^l, \langle \mb{P}(\mb{w}_{i-1}), \mb{P}(\mb{w}_{i-1})\rangle \rangle\|_* \|\mb{w}_{i-1}\|_*
	\]
	Now, assume that $\mb{w}_{i-1}$ admits the following breakdown
	\begin{equation}\label{eq:lem_E_help1}
		\mb{w}_{i-1} = \sum_{j=1}^{r_{i-1}} \lambda_j (x_j^{i-1})^{\otimes l}, \quad x_j^{i-1} \in \RR^{nr}, \ \|x_j^{i-1}\|_2=1
	\end{equation}
	where $\|\mb{w}_{i-1}\|_* = \sum_j |\lambda_j|$. Therefore,
	\[
		\langle \mb{P}(\mb{w}_{i-1}), \mb{P}(\mb{w}_{i-1})\rangle = \sum_{j_1,j_2}^{r_{i-1},r_{i-1}} \lambda_{j_1} \lambda_{j_2} \langle \mb{P}((x_{j_1}^{i-1})^{\otimes l} ), \mb{P}((x_{j_2}^{i-1})^{\otimes l}) \rangle,
	\]
	leading to 
	\[
		\langle (\mb{A}^l_r)^* \mb{A}^l, \langle \mb{P}(\mb{w}_{i-1}), \mb{P}(\mb{w}_{i-1})\rangle \rangle = \sum_{j_1,j_2}^{r_{i-1},r_{i-1}} \lambda_{j_1} \lambda_{j_2} \langle (\mb{A}^l_r)^* \mb{A}^l, \langle \mb{P}((x_{j_1}^{i-1})^{\otimes l}), \mb{P}((x_{j_2}^{i-1})^{\otimes l})\rangle \rangle.
	\]
	For given indices $j_1, j_2$ index, it follows from Lemma~\ref{lem:kron_iden} that
	\[
		\langle (\mb{A}^l_r)^* \mb{A}^l, \langle \mb{P}((x_{j_1}^{i-1})^{\otimes l}), \mb{P}((x_{j_2}^{i-1})^{\otimes l})\rangle \rangle = (\langle  \mb{A}_r^* \mb{A}, \mat(x_{j_1}^{i-1}) \mat(x_{j_2}^{i-1})^\top\rangle)^{\otimes l}
	\]
	Now, according to the definition of $\mb{A}_r \coloneqq I_r \oslash_{1,2} \mb{A}$, where $\oslash$ denotes the kronecker product (a reshaped tensor vector product, where the subscript denotes the dimension with which kronecker product is applied with respect to $\mb{A}$), we know that
	\[
		\langle \mb{A}_r^* \mb{A}, \mat(x_{j_1}^{i-1}) \mat(x_{j_2}^{i-1})^\top\rangle = I_r \oslash \langle \mb{A}^* \mb{A}, \mat(x_{j_1}^{i-1}) \mat(x_{j_2}^{i-1})^\top\rangle
	\]
	Hence, the eigenvalues of the LHS are just $r$ copies of that of the RHS \cite{petersen2008matrix}. This further implies
	\begin{align*}
		\|\langle \mb{A}_r^* \mb{A}, \mat(x_{j_1}^{i-1}) \mat(x_{j_2}^{i-1})^\top\rangle\|_* &= r\|\langle \mb{A}^* \mb{A}, \mat(x_{j_1}^{i-1}) \mat(x_{j_2}^{i-1})^\top\rangle\|_* \\
		&\leq r\sqrt{n} \|\langle \mb{A}^* \mb{A}, \mat(x_{j_1}^{i-1}) \mat(x_{j_2}^{i-1})^\top\rangle\|_F \\
		&\leq r\sqrt{n L_s} \|\mat(x_{j_1}^{i-1}) \mat(x_{j_2}^{i-1})^\top\|_F\\
		&= r\sqrt{n L_s}
	\end{align*}
	where the second last inequality follows from the RSS property, and the last equality follows from \eqref{eq:lem_E_help1}. Next, we apply Lemma~\ref{lem:tensor_norm_upper} again with
	\[
		\|\langle (\mb{A}^l_r)^* \mb{A}^l, \langle \mb{P}((x_{j_1}^{i-1})^{\otimes l}), \mb{P}((x_{j_2}^{i-1})^{\otimes l})\|_* \leq (\|\langle \mb{A}_r^* \mb{A}, \mat(x_{j_1}^{i-1}) \mat(x_{j_2}^{i-1})^\top\rangle\|_*)^l \leq r^l (n L_s)^{l/2}
	\]
	which leads to 
	\begin{align*}
		&\|\langle (\mb{A}^l_r)^* \mb{A}^l, \langle \mb{P}(\mb{w}_{i-1}), \mb{P}(\mb{w}_{i-1})\rangle \rangle\|_* \\
		\leq &\sum_{j_1,j_2}^{r_{i-1},r_{i-1}} |\lambda_{j_1}| |\lambda_{j_2}| \|\langle (\mb{A}^l_r)^* \mb{A}^l, \langle \mb{P}((x_{j_1}^{i-1})^{\otimes l}), \mb{P}((x_{j_2}^{i-1})^{\otimes l})\|_* \\
		\leq & r^l (n L_s)^{l/2} \sum_{j_1,j_2}^{r_{i-1},r_{i-1}} |\lambda_{j_1}| |\lambda_{j_2}|= r^l (n L_s)^{l/2} \|\mb{w}_{i-1}\|^2_* 
	\end{align*}
	This directly gives
	\[
		\|\mb{\hat E}_i \|_* \leq \eta (r^2 n L_s)^{l/2} \|\mb{w}_{i-1}\|^3_*
	\]
	Since our goal is to bound $\|\mb{E}_t\|_S$, we focus on  $\|(\mathcal{I}+ \eta \langle (\mb{A}^l_r)^* \mb{A}^l, (M^*)^{\otimes l} \rangle)^{t-i}\|_S$. Using binomial formula, we obtain that
	\[
		(\mathcal{I}+ \eta \langle (\mb{A}^l_r)^* \mb{A}^l, (M^*)^{\otimes l} \rangle)^{t-i} = \sum_{k=0}^{t-i} \binom{t-i}{k} \eta^k (\langle (\mb{A}^l_r)^* \mb{A}^l, (M^*)^{\otimes l} \rangle)^k
	\]
	where $\langle (\mb{A}^l_r)^* \mb{A}^l, (M^*)^{\otimes l} \rangle \in \RR^{nr \circ 2l}$, and $(\cdot)^k$ just denotes repeated multiplications along the even dimensions of the tensor, as explained in the disclaimer. To upper-bound the spectral norm of $(\mathcal{I}+ \eta \langle (\mb{A}^l_r)^* \mb{A}^l, (M^*)^{\otimes l} \rangle)^{t-i}$, it is necessary to upper-bound the spectral norm of $(\langle (\mb{A}^l_r)^* \mb{A}^l, (M^*)^{\otimes l} \rangle)^k$. To do so, we use Lemma~\ref{lem:dual_tensor_norm} to reformulate
	\[
		\|\langle (\mb{A}^l_r)^* \mb{A}^l, (M^*)^{\otimes l} \rangle^k\|_S = \sup_{\|\mb{v}\|_* \leq 1} | \langle (\mb{A}^l_r)^* \mb{A}^l, (M^*)^{\otimes l} \rangle^k, \mb{v} \rangle |
	\]
	Assume that the above supremum is achieved at $\mb{v}^*$, with nuclear norm decomposition of
	\[
		\mb{v}^* = \sum_{j_v=1}^{r_v} \lambda_{j_v} x_{j_v,1} \otimes \dots \otimes x_{j_v,2l}, \quad x_{j_v,p} \in \RR^{nr}, \ \|x_{j_v,p}\|_2=1 \ \forall p \in [2l]
	\]
	with $\sum_{j_v} |\lambda_{j_v}| = \|\mb{v}^*\|_* \leq 1$. Note that this decomposition is due to the fact that $\mb{v}$ is not necessarily symmetric. Again, by Lemma~\ref{lem:kron_iden}, 
	\[
		\langle (\mb{A}^l_r)^* \mb{A}^l, (M^*)^{\otimes l} \rangle^k = \left [ (\langle  \mb{A}_r^* \mb{A}, M^* \rangle)^k \right]^{\otimes l}, 
	\]
	directly leading to
	\[
		\|\langle (\mb{A}^l_r)^* \mb{A}^l, (M^*)^{\otimes l} \rangle^k\|_S = \sum_{j_v=1}^{r_v} | \lambda_{j_v} \prod_{p=0}^{l-1} x_{j_v,p*2}^\top \langle  \mb{A}_r^* \mb{A}, M^* \rangle^k  x_{j_v,p*2+1} |
	\]
	Since 
	\[
		x_{j_v,p*2}^\top \langle  \mb{A}_r^* \mb{A}, M^* \rangle^k x_{j_v,p*2+1} \leq \sigma_1^k(U)
	\]
	this means that 
	\[
		\|\langle (\mb{A}^l_r)^* \mb{A}^l, (M^*)^{\otimes l} \rangle^k\|_S = (\sigma_1^k(U))^l \sum_{j_v=1}^{r_v} | \lambda_{j_v}| \leq \sigma_1^{kl}(U)
	\]
	Going back to $(\mathcal{I}+ \eta \langle (\mb{A}^l_r)^* \mb{A}^l, (M^*)^{\otimes l} \rangle)^{t-i}$, 
	\begin{align*}
		\|(\mathcal{I}+ \eta \langle (\mb{A}^l_r)^* \mb{A}^l, (M^*)^{\otimes l} \rangle)^{t-i}\|_S &\leq \sum_{k=0}^{t-i} \binom{t-i}{k} \eta^k \|(\langle (\mb{A}^l_r)^* \mb{A}^l, (M^*)^{\otimes l} \rangle)^k\|_S \\
		&\leq \sum_{k=0}^{t-i} \binom{t-i}{k} \eta^k \sigma_1^{kl}(U) = (1+\eta \sigma_1^l(U))^{t-i}.
	\end{align*}
	Before further upper-bounding $\|\mb{E}_t\|_S$, we define $t_s$ in such a way that
	\begin{equation} \label{eq:ts_condition}
		\|\mb{\tilde w}_{t} - \mb{w}_t\|_* \leq \|\mb{\tilde w}_{t} \|_*, \quad \forall t \leq t_s
	\end{equation}
	where $\mb{\tilde w}_{t}$ is defined in \eqref{eq:wt_breakdown}. We will later justify the existence of $t_s$ and derive a lower bound. If the above inequality holds true, we also have
	\[
		 \|\mb{w}_{t} \|_* \leq  \|\mb{\tilde w}_{t} \|_* + \|\mb{\tilde w}_{t} - \mb{w}_t\|_* \leq 2 \|\mb{\tilde w}_{t} \|_*.
	\]
	Recall the binomial formula again and decompose $\mb{\tilde w}_{t}$ into
	\begin{equation} \label{eq:tildew_breakdown}
		\mb{\tilde w}_{t} = \sum_{k=0}^t \binom{t}{k} \eta^k \langle (\mb{A}^l_r)^* \mb{A}^l, (M^*)^{\otimes l} \rangle^k \mb{w}_0
	\end{equation}
	Therefore, it follows from  Lemma~\ref{lem:tensor_norm_upper} that,
	\begin{equation}\label{eq:tildew_nuclear_upper}
		\|\mb{\tilde w}_{i-1} \|_* \leq \left( \sum_{k=0}^{i-1} \binom{i-1}{k} \eta^k \| \langle (\mb{A}^l_r)^* \mb{A}^l, (M^*)^{\otimes l} \rangle^k \|_* \right) \|\mb{w}_0\|_*
	\end{equation}
	for all $i \leq t$. With the repeated application of Lemma~\ref{lem:tensor_norm_upper}, we have
	\[
		\| \langle (\mb{A}^l_r)^* \mb{A}^l, (M^*)^{\otimes l} \rangle^k \|_* \leq (\|U\|_*)^{kl} \leq \left(  r_U^{l} \sigma_1^l(U) \right)^k
	\]
	Therefore, substituting back into \eqref{eq:tildew_nuclear_upper} gives
	\[
		\|\mb{\tilde w}_{i-1} \|_* \leq \left( \sum_{k=0}^t \binom{t}{k} \eta^k \left( r^l r_U^{l} \sigma_1^l(U) \right)^k \right) \|\mb{w}_0\|_* = (1+\tilde \eta \sigma_1^l(U))^{i-1} \|\mb{w}_0\|_*
	\]
	
	Next, plugging the above preparatory results into \eqref{eq:et_norm_upper}, we have that 
	\begin{align*}
	\|\mb{E}_t\|_S &\leq \sum_{i=1}^t (1+\eta \sigma_1^l(U))^{t-i} \eta  (r^2 n L_s)^{l/2} \|\mb{w}_{i-1}\|^3_* \\
	&\leq \sum_{i=1}^t (1+\eta \sigma_1^l(U))^{t-i} \eta  (r^2 n L_s)^{l/2} 8 \|\mb{\tilde w}_{i-1}\|^3_*\\
	&\leq 8 \sum_{i=1}^t (1+\eta \sigma_1^l(U))^{t-i} \eta  (r^2 n L_s)^{l/2} (1+\tilde \eta \sigma_1^l(U))^{3i-3} \|\mb{w}_0\|^3_* \\
	&\leq 8 \epsilon^3 \eta  (r^2 n L_s)^{l/2} \sum_{i=1}^t (1+\tilde \eta \sigma_1^l(U))^{t-i}  (1+\tilde \eta \sigma_1^l(U))^{3i-3} \\
	& = 8 \epsilon^3 \|x_0^{\otimes l}\|_*^3 \eta  (r^2 n L_s)^{l/2} (1+\tilde \eta \sigma_1^l(U))^{t-1} \sum_{i=1}^t   (1+\tilde \eta \sigma_1^l(U))^{2i-2} \\
	& = 8 \epsilon^3 \|x_0^{\otimes l}\|_*^3\eta  (r^2 n L_s)^{l/2} (1+\tilde \eta \sigma_1^l(U))^{t-1} \frac{(1+\tilde \eta \sigma_1^l(U))^{2t}-1}{(1+\tilde \eta \sigma_1^l(U))^{2}-1} \ \text{(geometric sum)} \\
	& \leq 8 \epsilon^3 \|x_0^{\otimes l}\|_*^3 \eta  (r^2 n L_s)^{l/2} (1+\tilde \eta \sigma_1^l(U))^{t-1} (1+\tilde \eta \sigma_1^l(U))^{2t} \\
	& \leq \frac{8 \eta}{\tilde \eta \sigma_1^l(U)} \epsilon^3  (r^2 n L_s)^{l/2} (1+\tilde \eta \sigma_1^l(U))^{3t} \|x_0^{\otimes l}\|_*^3 \\
	& = \frac{r^l 8}{r_U^l \sigma_1^l(U)} \epsilon^3 (n L_s)^{l/2} (1+\tilde \eta \sigma_1^l(U))^{3t} \|x_0^{\otimes l}\|_*^3
\end{align*}
	proving the original claim of this lemma \eqref{eq:E_t_spectral}. Now, we give a lower bound on $t_s$. By recalling the breakdown \eqref{eq:tildew_breakdown}, we have
	\begin{equation}\label{eq:tildew_spectral_lower}
		\begin{aligned}
			\|\mb{\tilde w}_t\|_* &\geq \|\mb{\tilde w}_t\|_S \geq \langle \mb{\tilde w}_t, v_1^{\otimes l} \rangle \\
		&= \epsilon \sum_{k=0}^t \binom{t}{k} \eta^k \left[| v_1^\top \langle \mb{A}^*_r \mb{A}, M^* \rangle^k x_0 | \right]^{ l} \\
		&= \epsilon \sum_{k=0}^t \binom{t}{k} \eta^k \left[ | v_1^\top U^k x_0 | \right]^{l} \\
		&= \epsilon \sum_{k=0}^t \binom{t}{k} \eta^k (| \sigma_1^k(U) v_1^\top  x_0 |)^{l} = \epsilon |v_1^\top  x_0 |^{l} (1+\eta \sigma_1^l(U))^t
		\end{aligned}
	\end{equation}
	with $v_1$ being the first singular vector of $I_r \oslash U$. Since the sensing matrices are assumed to be symmetric, $U$ is also symmetric, hence the singular vectors of $U^k$ coincide with those of $U$. By \eqref{eq:E_t_spectral}, we also know
	\[
		\frac{\|\mb{\tilde w}_{t} - \mb{w}_t\|_*}{ \|\mb{\tilde w}_{t} \|_*} \leq \frac{r^l 8}{r_U^l \sigma_1^l(U)} \epsilon^2 \|x_0^{\otimes l}\|_*^3 \frac{n^{l/2}}{(v_1^\top  x_0 )^{l}} L_s^{l/2} \frac{(1+\tilde \eta \sigma_1^l(U))^{3t}}{(1+\eta \sigma_1^l(U))^t}
	\]
	Therefore, for \eqref{eq:ts_condition} to hold true, we need the RHS of the above equation to be smaller than 1, meaning that
	\[
		3t \ln(1+\tilde \eta \sigma_1^l(U)) \leq \ln\left( \frac{r_U^l \sigma_1^l(U)}{8 r^l \epsilon^2 L_s^{l/2} \|x_0^{\otimes l}\|_*^3} \frac{(v_1^\top  x_0 )^{l}}{n^{l/2}}\right) + t \ln(1+\eta \sigma_1^l(U))
	\]	
	This further implies that for \eqref{eq:ts_condition} to hold, $t$ should satisfy
	\[
		t < \frac{\ln\left( \frac{r_U^l \sigma_1^l(U)}{8 r^l \epsilon^2 L_s^{l/2} \|x_0^{\otimes l}\|_*^3} \frac{(v_1^\top  x_0 )^{l}}{n^{l/2}}\right)}{3\ln(1+\tilde \eta \sigma_1^l(U)) - \ln(1+\eta \sigma_1^l(U))} <  \frac{\ln\left( \frac{r_U^l \sigma_1^l(U)}{8 r^l \epsilon^2 L_s^{l/2} \|x_0^{\otimes l}\|_*^3} \frac{(v_1^\top  x_0 )^{l}}{n^{l/2}}\right)}{2\ln(1+\tilde \eta \sigma_1^l(U))}
	\]
	which after rearrangement gives \eqref{eq:ts_bound}.
\end{proof}

Now, we present the proof of Lemma~\ref{lem:w_eig_ratio}.
\begin{proof}[Proof of Lemma~\ref{lem:w_eig_ratio}]
	Using the tensor Weyl's inequality (Lemma~\ref{lem:tensor_weyl}), we have that 
	\begin{align}
		 \lambda^v_{2}(\mb{w}_t) \leq \lambda^v_{2}(\mb{\tilde w}_t) + \|\mb{E}_t\|_S \\
		\lambda_1^v(\mb{w}_t) \geq \lambda_1^v(\mb{\tilde w}_t) - \|\mb{E}_t\|_S
	\end{align}
	The only remaining part of the proof is the characterization of $\lambda_1^v(\mb{\tilde w}_t)$ and $\lambda^v_{2}(\mb{\tilde w}_t)$. The first term is easy because we already have the characterization from the proof of Lemma~\ref{lem:E_t_spectral}, with \eqref{eq:tildew_spectral_lower} giving rise to
	\[
		\|\mb{\tilde w}_t\|_S \geq \epsilon |v_1^\top  x_0 |^{l} (1+\eta \sigma_1^l(U))^t
	\]
	Also, by the definition of v-eigenvalues and \eqref{eq:tildew_breakdown}, we have that 
	\begin{align*}
		\lambda^v_{2}(\mb{\tilde w}_t) &=  \max_{\substack{V \\ \dim(V)=2}} \min_{\substack{v \in V \\ \|v\|_2=1}} \epsilon \sum_{k=0}^t \binom{t}{k} \eta^k \left[| v^\top \langle \mb{A}^*_r \mb{A}, M^* \rangle^k x_0| \right]^{ l} \\
		&=\epsilon \|x_0\|_2^l \max_{\substack{V \\ \dim(V)=2}} \min_{\substack{v \in V \\ \|v\|_2=1}} \sum_{k=0}^t \binom{t}{k} \eta^k   |v^\top U^k \frac{x_0}{\|x_0\|_2} |^{ l} \\
		&\leq \epsilon \|x_0\|_2^l \max_{\substack{V \\ \dim(V)=2}} \min_{\substack{v \in V \\ \|v\|_2=1}} \sum_{k=0}^t \binom{t}{k} \eta^k   |v^\top U^k v |^{ l} \\
		& = \epsilon \|x_0\|_2^l \sum_{k=0}^t \binom{t}{k} \eta^k   |v_2^\top U^k v_2 |^{ l} \\
		& = \epsilon \|x_0\|_2^l \sum_{k=0}^t \binom{t}{k} \eta^k |\sigma_2^k(U)|^l \\
		& = \epsilon \|x_0\|_2^l (1+\eta \sigma_2^l(U))^t
	\end{align*}
	where $v_2$ is the singular vector associated with $\sigma_2^k(U) \ \forall k \in [t]$. Finally, combining the above equations yields \eqref{eq:w_eig_separation} after rearrangements.
\end{proof}

Next, we present a supporting lemma which explains that Gaussian concentration is suited for our purpose.
\begin{lemma}\label{lem:gauss_init_value}
	Let $x_0 = v_1 +g \in \RR^{nr}$, where $g$ is a vector with each entry being i.i.d sampled from Gaussian distribution $\mathcal{N}(0,\rho)$. For some universal constant $C$, the follwoing inequalities hold:
	\begin{align*}
		&\mathbb{P}\left[ |v_1^\top x_0|^l \geq (1 - \mathcal{O}(\sqrt{\rho}))^l \right] \geq 1-2\exp(-C/\rho), \\
		&\mathbb{P}\left[ \|x_0\|_2^l \leq ( \sqrt{1+\rho^2nr} + \mathcal{O}(\rho^{3/2}))^l \right] \geq 1-2\exp(-C/\rho)
	\end{align*}
\end{lemma}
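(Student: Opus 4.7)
The plan is to establish both concentration inequalities by decomposing $x_0 = v_1 + g$, handling the Gaussian linear functional $v_1^\top g$ by a one-dimensional tail bound and the quadratic $\|g\|_2^2$ by a chi-squared (Laurent–Massart) tail bound, and then recovering the $l$-th powers at the end by monotonicity on $\mathbb{R}_{\geq 0}$.

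First I would handle the inner-product inequality. Since $\|v_1\|_2 = 1$,
\[
v_1^\top x_0 \;=\; v_1^\top v_1 + v_1^\top g \;=\; 1 + v_1^\top g,
\]
and $v_1^\top g$ is a scalar Gaussian with mean $0$ and variance determined by $\rho$ and $\|v_1\|_2^2 = 1$. The standard Gaussian tail bound $\mathbb{P}[|Y| > t] \leq 2\exp(-t^2/(2\mathrm{Var}(Y)))$ applied with a threshold $t$ of order $\sqrt{\rho}$ then yields $|v_1^\top g| \leq \mathcal{O}(\sqrt{\rho})$ with probability at least $1 - 2\exp(-C/\rho)$ after choosing the hidden constant to absorb into $C$. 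Hence $|v_1^\top x_0| \geq 1 - \mathcal{O}(\sqrt{\rho})$, which for $\rho$ small enough is positive, so raising both sides to the (positive) $l$-th power preserves the inequality and yields the first claim.

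Second, for the norm bound I would expand
\[
\|x_0\|_2^2 \;=\; 1 + 2\, v_1^\top g + \|g\|_2^2,
\]
reuse the Gaussian tail bound above for the cross term, and use the Laurent–Massart concentration $\mathbb{P}[\chi^2_{nr} - nr \geq 2\sqrt{nr\,s} + 2s] \leq \exp(-s)$ (rescaled by $\mathrm{Var}(g_i)$) for the quadratic term. Choosing $s$ of order $1/\rho$ so that the failure probability is again of the form $\exp(-C/\rho)$ and union-bounding the two events gives
\[
\|x_0\|_2^2 \;\leq\; 1 + \rho^2 nr + \mathcal{O}(\rho^{3/2}),
\]
where the $\mathcal{O}(\rho^{3/2})$ absorbs both the cross term $2v_1^\top g$ and the chi-squared deviation. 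Taking square roots via $\sqrt{1+\rho^2 nr + \varepsilon} \leq \sqrt{1+\rho^2 nr} + \varepsilon/(2\sqrt{1+\rho^2 nr})$ for small $\varepsilon$, and then raising to the $l$-th power on the nonnegative reals, delivers the second claim with the stated probability.

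The only real obstacle is bookkeeping: making the exponents in the tail bound line up as $\exp(-C/\rho)$ rather than the more familiar $\exp(-c\, nr)$ form, which forces a particular coupling between the chosen deviation threshold and the variance $\rho$. Concretely, this means choosing the Gaussian threshold $t = \Theta(\sqrt{\rho})$ (so that $t^2/\mathrm{Var}(v_1^\top g) = \Theta(1/\rho)$) and the chi-squared parameter $s = \Theta(1/\rho)$, then verifying that the resulting contributions to $\|x_0\|_2^2$ indeed collapse into a single $\mathcal{O}(\rho^{3/2})$ slack on top of the deterministic $1 + \rho^2 nr$. Once those constants are tracked through consistently, both conclusions follow by a single union bound and a monotone $l$-th power, and no further probabilistic machinery is needed.
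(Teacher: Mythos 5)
Your treatment of the first inequality matches the paper's: both reduce to a one-dimensional sub-Gaussian tail bound on $v_1^\top g$, whose variance is $\rho^2$, with threshold $t \asymp \sqrt{\rho}$ giving failure probability $2\exp(-C/\rho)$. For the second inequality you take a genuinely different route. The paper invokes Vershynin's Theorem 3.1.1 (concentration of the Euclidean norm) directly on $x_0$, whereas you expand $\|x_0\|_2^2 = 1 + 2\, v_1^\top g + \|g\|_2^2$ and control the cross term by a Gaussian tail and the quadratic term by Laurent--Massart. Your decomposition is, if anything, the more careful of the two: $x_0 = v_1 + g$ has a deterministic shift, so its coordinates are neither zero-mean nor identically distributed, which is what Theorem 3.1.1 formally assumes, and your expansion handles the shift explicitly.

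The difficulty is that your bookkeeping for the second bound does not close at the claimed rate. To make $\mathbb{P}\bigl[|2v_1^\top g| > t\bigr] \leq 2\exp(-C/\rho)$ when $2v_1^\top g$ has variance $4\rho^2$, you need $t^2/\rho^2 \gtrsim 1/\rho$, i.e.\ $t \gtrsim \sqrt{\rho}$; with $t \asymp \rho^{3/2}$ the tail is $\exp(-\Theta(\rho))$, which tends to $1$, not $0$. The Laurent--Massart piece at $s \asymp 1/\rho$ contributes $\rho^2\bigl(2\sqrt{nr\,s}+2s\bigr) \asymp \rho^{3/2}\sqrt{nr}+\rho$, again dominated by the $\sqrt{\rho}$ cross term. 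So what your argument actually delivers is $\|x_0\|_2 \leq \sqrt{1+\rho^2 nr} + \mathcal{O}(\sqrt{\rho})$ at confidence $1-2\exp(-C/\rho)$, not the $\mathcal{O}(\rho^{3/2})$ slack asserted in the statement --- a slack which, one can check, is in fact unattainable at this confidence level because $v_1^\top g$ alone already exceeds $\rho^{3/2}$ with constant probability. This mirrors a slip in the paper's own proof, whose quoted tail $\exp(-c_2 t^2/\rho^4)$ should scale as $\exp(-c_2 t^2/\rho^2)$ under Theorem 3.1.1, forcing the same $\sqrt{\rho}$ threshold. Since the lemma is used only in Corollary~\ref{cor:asymp_implicit_bias} to conclude $\|x_0\|_2^l \asymp |v_1^\top x_0|^l \asymp 1$ as $\rho \to 0$, the corrected $\mathcal{O}(\sqrt{\rho})$ slack is fully sufficient downstream, so nothing breaks; but as written your proposal cannot establish the literal $\rho^{3/2}$ exponent, and you should either replace it by $\sqrt{\rho}$ or weaken the failure probability accordingly.
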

\begin{proof}[Proof of Lemma~\ref{lem:gauss_init_value}]
	We know that 
	\begin{align*}
		&|v_1^\top x_0| = |1+ v_1^\top g| \geq 1 - |v_1^\top x_0|
	\end{align*}
	Theorem 2.6.3 of \cite{vershynin2018high} (general Hoeffding's) gives that with probability at least $1- 2\exp(-t^2/\rho^2)$,
	\[
		|v^\top g| \leq t \quad \forall \|v\|_2=1
	\]
	which leads to the first concentration bound after substituting $t = \mathcal{O}(\sqrt{\rho})$ with some constant $c_1$. Then, Theorem 3.1.1 in \cite{vershynin2018high} gives
	\[
		\mathbb{P}\left[|\|x_0\|_2 - \sqrt{1+\rho^2 nr}| \leq t \right] \geq 1- 2 \exp(-c_2 t^2/\rho^4)
	\]
	for $g \sim \mathcal{N}(0, \rho I_{nr})$ and some constant $c_2$. This is because $\mathbb{E}[\|x_0\|^2_2 ]= 1+\rho^2 nr$. Substituting  $t = \mathcal{O}(\rho^{3/2})$ yields that
	\[
		\mathbb{P}\left[\|x_0\|_2 \leq \sqrt{1+\rho^2nr} + \mathcal{O}(\rho^{3/2}) \right]  \geq  1- 2\exp(-c_2/\rho)
	\]
	which results in the second bound. Now, we choose $C = \min\{c_1,c_2\}$.
\end{proof}

Then, we prove our main theorem of this section.
\begin{proof}[Proof of Theorem~\ref{thm:implicit_bias_gd}]
		First, set $2\zeta = \kappa$, implying that $\zeta < 1/2$. We aim to derive sufficient conditions for the following inequalities to hold:
		\begin{align}
				\lambda_2^v(\mb{\tilde w}_t) \leq \frac{\zeta}{2} \lambda_1^v(\mb{\tilde w}_t), \label{eq:thm_implicit_help1}\\
				\|\mb{E}_t\|_s \leq \frac{\zeta}{2} \lambda_1^v(\mb{\tilde w}_t) \label{eq:thm_implicit_help2}
		\end{align}
		By recalling Lemma~\ref{lem:w_eig_ratio}, a sufficient condition for \eqref{eq:thm_implicit_help1} is that 
		\[
			\epsilon \|x_0\|_2^l (1+\eta \sigma_2^l(U))^t \leq \frac{\zeta}{2} \epsilon |v_1^\top  x_0 |^{l} (1+\eta \sigma_1^l(U))^t
		\]
		implying that
		\[
			\frac{2 \|x_0\|_2^l}{\zeta |v_1^\top  x_0 |^{l}} \leq \left( \frac{ 1+\eta \sigma_1^l(U)}{1+\eta \sigma_2^l(U)}\right)^t
		\]
		which after rearrangements gives $t \geq t(\zeta,l)$, as defined in \eqref{eq:t_kappa_l}. Then, we obtain a sufficient condition for \eqref{eq:thm_implicit_help2}, which by Lemma~\ref{lem:E_t_spectral} is
		\begin{equation}\label{eq:thm_implicit_help3}
			\frac{8 r^l}{r_U^l \sigma_1(U)^l} \epsilon^3 (n L_s)^{l/2} (1+\tilde \eta \sigma_1(U)^l)^{3t} \|x_0^{\otimes l}\|_*^3 \leq \frac{2}{\zeta} \epsilon |v_1^\top  x_0 |^{l} (1+\eta \sigma_1^l(U))^t 
		\end{equation}
		contingent on the fact that $t \leq t_s$. Therefore, before going further, we need to verify that $t(\zeta,l) \leq t_s$ for some small enough $\epsilon$. \eqref{eq:ts_bound} implies that a sufficient condition is
		\begin{align*}
			\ln\left( \frac{2\|x_0\|^l_2}{\zeta |v_1^\top x_0|^l}\right) \ln\left( \frac{ 1+\eta \sigma_1^l(U)}{1+\eta \sigma_2^l(U)} \right)^{-1} \leq \frac{\ln\left( \frac{\sigma_1^l(U)r_U^l}{8 r^l L_s^{l/2} \|x_0^{\otimes l}\|_*^3 \epsilon^2} \frac{|x_0^\top v_1|^l}{ n^{l/2}}\right)}{2\ln(1+\tilde \eta \sigma_1^l(U))}
		\end{align*}
		Additionally, by leveraging the identity $x/(1+x) \leq \ln(1+x) \leq x$, we derive the following identity 
		\begin{equation} \label{eq:xi_def}
			\frac{\ln(1+\tilde \eta \sigma_1^l(U))}{\ln\left( \frac{ 1+\eta \sigma_1^l(U)}{1+\eta \sigma_2^l(U)} \right)^{-1}} \leq \frac{r_U^l(1+\eta \sigma_1^l(U))}{1-(\sigma_2(U)/\sigma_1(U))^l} \coloneqq \Xi
		\end{equation}
		Hence,
		\[
			2 \ln\left( \frac{2\|\mb{w}_0\|^l_2}{\zeta |v_1^\top x_0|^l}\right) \Xi \leq \ln\left( \frac{\sigma_1^l(U)r_U^l}{8 r^l L_s^{l/2} \|x_0^{\otimes l}\|_*^3 \epsilon^2} \frac{|x_0^\top v_1|^l}{ n^{l/2}}\right)
		\]
		and after rearrangement gives 
		\begin{equation}\label{eq:thm_implicit_help4}
			\epsilon^2 \leq \frac{\sigma_1^l(U)r_U^l}{8 (r^2nL_s)^{l/2}} \frac{|x_0^\top v_1|^l}{\|x_0^{\otimes l}\|_*^3} \left( \frac{2\|x_0\|^l_2}{\zeta |v_1^\top x_0|^l}\right)^{-\Xi}
		\end{equation}
		Notice that all of the above terms are independent of $\epsilon$, and are positive. Therefore, a small enough $\epsilon$ exists. Also notice that a smaller step-size $\eta$ will yield a loser bound on $\epsilon$ through the dependence of $\Xi$.
		Now, consider \eqref{eq:thm_implicit_help3} again. Since $T$ is finite, a sufficient condition for \eqref{eq:thm_implicit_help3} is
		\begin{equation}\label{eq:thm_implicit_help5}
			\epsilon^2 \leq \zeta \frac{r_U^l \sigma_1(U)^l}{16 (r^2n L_s)^{l/2} } \frac{|v_1^\top  x_0 |^{l}}{\|x_0^{\otimes l}\|_*^3} \left(\frac{1+\eta \sigma_1^l(U)}{(1+\tilde \eta \sigma_1(U)^l)^3}\right)^T
		\end{equation}
		which can again be achieved by setting a small enough $\epsilon$, since all other terms are positive and not dependent on it. In summary, if we choose a small constant $\epsilon$ satisfying both \eqref{eq:thm_implicit_help4} and \eqref{eq:thm_implicit_help5}, and if $t_s \geq t_T$ (which again can be achieved via a sufficiently small $\epsilon$), it is already sufficient for both \eqref{eq:thm_implicit_help1} and \eqref{eq:thm_implicit_help2} to hold, thereby giving:
		\[
			\frac{\lambda_2^v(\mb{\tilde w}_t)+\|\mb{E}_t\|_S}{\lambda_1^v(\mb{\tilde w}_t)} \leq \zeta
		\]
		If $\zeta < 1/2$, this further implies
		\[
			\lambda_1^v(\mb{\tilde w}_t) > 2\lambda_2^v(\mb{\tilde w}_t)+2\|\mb{E}_t\|_S \implies \|\mb{E}_t\|_S \leq \frac{1}{2} \lambda_1^v(\mb{\tilde w}_t) - \lambda_2^v(\mb{\tilde w}_t) \leq \frac{1}{2} \lambda_1^v(\mb{\tilde w}_t)
		\]
		As a result,
		\[
			\frac{\lambda^v_{2}(\mb{w}_t)}{\lambda^v_1(\mb{w}_t)} \leq \frac{\lambda_2^v(\mb{\tilde w}_t)+\|\mb{E}_t\|_S}{\lambda_1^v(\mb{\tilde w}_t)-\|\mb{E}_t\|_S} \leq \frac{\zeta \lambda_1^v(\mb{\tilde w}_t)}{\lambda_1^v(\mb{\tilde w}_t)/2} = 2\zeta
		\]
		which proves \eqref{eq:w_eig_kappa}. 
	\end{proof}
	
	Theorem~\ref{thm:implicit_bias_gd} can also be improved via  Lemma~\ref{lem:gauss_init_value} as stated below. 
	\begin{corollary}[Corollary to Theorem~\ref{thm:implicit_bias_gd}]\label{cor:asymp_implicit_bias}
		Consider the optimization problem and the GD trajectory given in Theorem~\ref{thm:implicit_bias_gd}. If additionally $x_0 = v_1 +g \in \RR^{nr}$ and $g \sim \mathcal{N}(0, \rho I_{nr})$, then
		\begin{equation} \label{eq:magnitude_t_w_eig}
			\frac{\lambda^v_{2}(\mb{w}_t)}{\lambda^v_1(\mb{w}_t)} \leq \kappa \quad \text{for} \ \ t \asymp \ln\left( \frac{1}{\kappa}\right) \ln\left( \frac{ 1+\eta \sigma_1^l(U)}{1+\eta \sigma_2^l(U)} \right)^{-1}
		\end{equation}
		provided that
		\begin{equation}
			\epsilon \asymp \sqrt{\kappa/2} \frac{(\sigma_1(U)r_U)^{l/2}}{4 (r^2nL_s)^{l/4}} (\frac{\kappa}{4})^{3\Xi/2}, \quad \text{where} \ \Xi \coloneqq \frac{r_U^l(1+\eta \sigma_1^l(U))}{1-(\sigma_2(U)/\sigma_1(U))^l} 
		\end{equation}
		with probability at least $1-2\exp(-C/\rho)$ for some universal constant $C$ as $\rho \rightarrow 0$, where $\sigma_1(U)$ and $\sigma_2(U)$ are the first two singular values of $U = \langle \mathbf{A}_r, b \rangle_3$, with $v_1$ being the associated singular vector of $\sigma_1(U)$ ($\asymp$ denotes "asymptotic to", meaning that the two terms of both sides of this symbol are of the same order of magnitude).
	\end{corollary}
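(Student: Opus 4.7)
The plan is to obtain the corollary as a quantitative refinement of Theorem~\ref{thm:implicit_bias_gd}: the theorem already produces sufficient conditions on $\epsilon$ and $t$ in terms of $\|x_0\|_2$ and $|v_1^\top x_0|$, and the corollary simply replaces those data-dependent quantities with their high-probability Gaussian estimates.

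First, I would invoke Lemma~\ref{lem:gauss_init_value} with the initialization $x_0 = v_1 + g$, $g\sim\mathcal{N}(0,\rho I_{nr})$. On the intersection of the two events produced there, which has probability at least $1-2\exp(-C/\rho)$ by a union bound and a redefinition of the constant $C$, we have $|v_1^\top x_0|^l \geq (1 - \mathcal{O}(\sqrt\rho))^l$ and $\|x_0\|_2^l \leq (\sqrt{1+\rho^2 nr} + \mathcal{O}(\rho^{3/2}))^l$. As $\rho\to 0$, both collapse to $1$, so in the asymptotic regime I can treat $|v_1^\top x_0|$, $\|x_0\|_2$, and hence $\|x_0^{\otimes l}\|_\ast=\|x_0\|_2^l$, as $\asymp 1$ uniformly in everything else.

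Plugging these into \eqref{eq:t_kappa_l} reduces the numerator $\ln(\|x_0\|_2^l/(\kappa|v_1^\top x_0|^l))$ to $\ln(1/\kappa) + o(1)$, and Theorem~\ref{thm:implicit_bias_gd} therefore guarantees $\lambda_2^v(\mb{w}_t)/\lambda_1^v(\mb{w}_t)\leq\kappa$ once $t\asymp\ln(1/\kappa)\,\ln((1+\eta\sigma_1^l(U))/(1+\eta\sigma_2^l(U)))^{-1}$, which is exactly the stated horizon. For the scale of $\epsilon$, I would revisit the two sufficient conditions uncovered inside the proof of Theorem~\ref{thm:implicit_bias_gd}, namely \eqref{eq:thm_implicit_help4} (arising from $t(\kappa,l)\leq t_s$) and \eqref{eq:thm_implicit_help5} (arising from the error-term bound for all $t\leq T$), substitute $|v_1^\top x_0|^l,\|x_0\|_2^l,\|x_0^{\otimes l}\|_\ast^3\asymp 1$ and $\zeta=\kappa/2$, and verify that \eqref{eq:thm_implicit_help5} is the binding one. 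To evaluate it at $T=t(\kappa,l)$, I would take the logarithm of the horizon-dependent factor $((1+\eta\sigma_1^l(U))/(1+\tilde\eta\sigma_1^l(U))^3)^T$ and bound $\ln(1+\tilde\eta\sigma_1^l(U)) \leq \Xi\,\ln((1+\eta\sigma_1^l(U))/(1+\eta\sigma_2^l(U)))$ via \eqref{eq:xi_def}; this converts $3T\ln(1+\tilde\eta\sigma_1^l(U))$ into $3\Xi\,\ln(1/(\kappa/4))$, so the $T$-dependent factor contributes $(\kappa/4)^{3\Xi}$ to the upper bound on $\epsilon^2$. Taking a square root of the resulting inequality, together with the prefactors $(\kappa/2)$ and $r_U^l\sigma_1^l(U)/(16(r^2nL_s)^{l/2})$ carried over from \eqref{eq:thm_implicit_help5}, yields the claimed asymptotic $\epsilon\asymp\sqrt{\kappa/2}\,(\sigma_1(U)r_U)^{l/2}/(4(r^2nL_s)^{l/4})\,(\kappa/4)^{3\Xi/2}$.

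The main obstacle I anticipate is careful bookkeeping of exponents and constants: tracking how $\zeta=\kappa/2$ morphs into the $\kappa/4$ that appears in the exponentiated term, checking that \eqref{eq:thm_implicit_help4} is dominated by \eqref{eq:thm_implicit_help5} for the asymptotic regime considered (so that only the $(\kappa/4)^{3\Xi/2}$ factor—and not a separate $(\cdot)^{\Xi/2}$ factor—survives), and ensuring that the $o(1)$ corrections coming from the Gaussian concentration are absorbed into the $\asymp$ symbol rather than altering the leading-order expression. Everything else is direct substitution into the machinery already built in the proof of Theorem~\ref{thm:implicit_bias_gd}.
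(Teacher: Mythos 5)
Your proposal follows essentially the same route as the paper's proof: invoke Lemma~\ref{lem:gauss_init_value} to replace $\|x_0\|_2^l$, $|v_1^\top x_0|^l$, and $\|x_0^{\otimes l}\|_\ast$ by $\mathcal{O}(1)$ quantities with probability $1-2\exp(-C/\rho)$, substitute into the two sufficient conditions from the proof of Theorem~\ref{thm:implicit_bias_gd} (the $t\geq t(\kappa,l)$ bound and the $\epsilon^2$ bound from \eqref{eq:thm_implicit_help5}), and use \eqref{eq:xi_def} to convert the horizon-dependent factor into $(\kappa/4)^{3\Xi/2}$. The only cosmetic difference is that you assert \eqref{eq:thm_implicit_help5} is binding, whereas the paper formally takes the minimum of the two resulting $\epsilon$-scales—both are captured by the $\asymp$ symbol.
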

	\begin{proof}[Proof of Corollary~\ref{cor:asymp_implicit_bias}]
		The proof is similar to that of Theorem~\ref{thm:implicit_bias_gd} (note $\zeta = \kappa/2$), and therefore we only highlight the difference. We know that \eqref{eq:thm_fop_highr_help1} holds true if
		\begin{align*}
			t \geq \ln\left( \frac{2\|x_0\|^l_2}{\zeta |v_1^\top x_0|^l}\right) \ln\left( \frac{ 1+\eta \sigma_1^l(U)}{1+\eta \sigma_2^l(U)} \right)^{-1},  \\
			\epsilon^2 \leq \frac{\sigma_1^l(U)r_U^l}{8 (r^2nL_s)^{l/2}} \frac{|x_0^\top v_1|^l}{\|x_0^{\otimes l}\|_*^3} \left( \frac{2\|x_0\|^l_2}{\zeta |v_1^\top x_0|^l}\right)^{-\Xi}
		\end{align*}
		It results from Lemma~\ref{lem:gauss_init_value} that for our choice of initialization, we have that
		\[
			\|x_0\|^l_2 \asymp \|v_1^\top x_0|^l \asymp 1
		\]
		with probability at least $1-2\exp(-C /\rho)$. Thus, as long as 
		\begin{align}
			&t \asymp \ln\left( \frac{2}{\zeta}\right) \ln\left( \frac{ 1+\eta \sigma_1^l(U)}{1+\eta \sigma_2^l(U)} \right)^{-1} \coloneqq t_*, \\
			&\epsilon \asymp \frac{(\sigma_1(U)r_U)^{l/2}}{2\sqrt{2} (r^2nL_s)^{l/4}} \left( \frac{2}{\zeta}\right)^{-\Xi/2} \label{eq:cor_implicit_help1}
		\end{align}
		\eqref{eq:thm_fop_highr_help1} will hold with high probability. Next, in order for \eqref{eq:thm_implicit_help2} to hold for $t \asymp t_*$, we know that 
		\[
			\epsilon^2 \leq \zeta \frac{r_U^l \sigma_1(U)^l}{16 (r^2n L_s)^{l/2} } \frac{|v_1^\top  x_0 |^{l}}{\|x_0^{\otimes l}\|_*^3} \left(\frac{1+\eta \sigma_1^l(U)}{(1+\tilde \eta \sigma_1(U)^l)^3}\right)^{t_*}
		\]
		Via the same order of magnitude argument, we know that the following condition is sufficient for \eqref{eq:thm_implicit_help2}:
		\[
			\epsilon \asymp \sqrt{\zeta} \frac{(\sigma_1(U)r_U)^{l/2}}{4 (r^2nL_s)^{l/4}} \left(\frac{1+\eta \sigma_1^l(U)}{(1+\tilde \eta \sigma_1(U)^l)^3}\right)^{t_*/2}
		\]
		Now,
		\begin{align*}
			\left(\frac{1+\eta \sigma_1^l(U)}{(1+\tilde \eta \sigma_1(U)^l)^3}\right)^{t_*/2} &\geq \left[ \frac{1}{(1+\tilde \eta \sigma_1(U)^l)^3 } \right]^{t_*/2}\\
			&=\exp\left( -\frac{3t_*}{2} \ln(1+\tilde \eta \sigma_1(U)^l) \right) \\
			&=\exp\left( -\frac{3}{2} \ln(\frac{2}{\zeta}) \frac{\ln(1+\tilde \eta \sigma_1(U)^l)}{\ln\left( \frac{ 1+\eta \sigma_1^l(U)}{1+\eta \sigma_2^l(U)} \right)} \right) \\
			& \geq \exp\left( -\frac{3}{2} \ln(\frac{2}{\zeta}) \Xi \right) = (\frac{\zeta}{2})^{3\Xi/2}
		\end{align*}
		where the second equality follows from the substitution of $t_*$, and the last inequality follows from \eqref{eq:xi_def}. As a result,
		\begin{equation}\label{eq:cor_implicit_help2}
			\epsilon \asymp \sqrt{\zeta} \frac{(\sigma_1(U)r_U)^{l/2}}{4 (r^2nL_s)^{l/4}} (\frac{\zeta}{2})^{3\Xi/2}
		\end{equation}
		Therefore, taking the minimum of \eqref{eq:cor_implicit_help1} and \eqref{eq:cor_implicit_help2}, we know that 
		\begin{equation}
			\epsilon \asymp \sqrt{\zeta} \frac{(\sigma_1(U)r_U)^{l/2}}{ (r^2nL_s)^{l/4}} (\frac{\zeta}{2})^{3\Xi/2}
		\end{equation}
		is sufficient for \eqref{eq:thm_implicit_help1} and \eqref{eq:thm_implicit_help2}, leading to \eqref{eq:magnitude_t_w_eig} via the same steps in the proof of Theorem~\ref{thm:implicit_bias_gd}.
	\end{proof}
	
\newpage
\section{Additional Details for Properties of Approximate Rank-1 Tensors}\label{sec:app_rank1_properties}

We start with the proof of Proposition~\ref{prop:tensor_rank1_breakdown}.
\begin{proof}[Proof of Proposition~\ref{prop:tensor_rank1_breakdown}]
	Given a symmetric tensor $\mb{w}$, it can be decomposed as
	\[
		\mb{w} = \sum_{i=1}^{r_w} \lambda_i x_i^{\otimes l}
	\]
	where $r_w$ is $\mb{w}$'s symmetric rank. Now, consider the vector $w_s \in \RR^n$ that attains the spectral norm, meaning that $\langle \mb{w}, w_s^{\otimes l} \rangle = \lambda_1^v(\mb{w})$. One can decompose each $x_i^{\otimes l}$ into a parallel component and an orthogonal component. To be more specific,
	\[
		x_i = x_i^s + x_i^\perp \implies x_i^{\otimes l} = (x_i^s)^{\otimes l} + \sum_{j=1}^{2^l-1} \underbrace{x_i^\perp \otimes \dots \otimes x_i^\perp}_{j} \otimes \underbrace{x_i^s \otimes \dots \otimes x_i^s}_{l-j}
	\]
	and it is apparent that the second term is orthogonal to $w_s^{\otimes l}$ via Lemma~\ref{lem:kron_iden}. Therefore, we just organize all components $w_s^{\otimes l}$ together and all orthogonal components together. By definition, the parallel component has the magnitude $\lambda_1^v(\mb{w})$. Also, by the definition of v-eigenvalues, $\|\mb{w}^\dagger\|_S \leq \lambda^v_2(\mb{w}_t)$ since otherwise the dominant direction of $\mb{w}^\dagger$ will just become the second eigenvector of $\mb{w}$.
\end{proof}

We now provide the proof of Proposition~\ref{prop:kappa_fop}.
\begin{proof}[Proof of Proposition~\ref{prop:kappa_fop}]
	According to \eqref{eq:focp_lifted_highr}, the gradient of \eqref{eq:lifted_main_highr} with respect to $\mb{w}$ can be expressed as
	\begin{equation}\label{eq:prop_kappa_fop_help1}
		\nabla h^l(\mb{w}) = \langle \langle (\mb{A}^l_r)^* \mb{A}^l, \langle \mb{P}(\mb{w}), \mb{P}(\mb{w}) \rangle_{2*[l]} - (M^*)^{\otimes l} \rangle, \mb{w} \rangle_{2*[l]}
	\end{equation}
	where $(\mb{A}^l_r)^* \mb{A}^l$ is defined in \eqref{eq:A_r_def}. In light of \eqref{eq:tensor_rank1_breakdown}, one can write
	\begin{align*}
		&\langle \mb{P}(\mb{w}), \mb{P}(\mb{w}) \rangle_{2*[l]} = \langle  \langle \mb{P}^{\otimes l}, \mb{P}^{\otimes l} \rangle_{2*[l]}, \mb{w} \otimes \mb{w}  \rangle_{3,4,7,8,\dots,4l-1,4l} \\
		= & \underbrace{ \langle  \langle \mb{P}^{\otimes l}, \mb{P}^{\otimes l} \rangle, \mb{w}_\sigma \otimes \mb{w}_\sigma  \rangle}_{\mb{a}_1} + 2 \underbrace{\langle  \langle \mb{P}^{\otimes l}, \mb{P}^{\otimes l} \rangle, \mb{w}_\sigma \otimes \mb{w}^\dagger  \rangle}_{\mb{a}_2} + \underbrace{\langle  \langle \mb{P}^{\otimes l}, \mb{P}^{\otimes l} \rangle, \mb{w}^\dagger \otimes \mb{w}^\dagger  \rangle}_{\mb{a}_3}
	\end{align*}
	where $\mb{w}_\sigma = \lambda_1^v(\mb{w}) \hat w^{\otimes l}$. Note that we have dropped the subscripts from the second line and henceforth for sake of simplicity. By using this logic, \eqref{eq:prop_kappa_fop_help1} can be written as
	\begin{align*}
		\nabla h^l(\mb{w}) = \underbrace{\langle \langle (\mb{A}^l_r)^* \mb{A}^l, \langle \mb{a}_1 - (M^*)^{\otimes l} \rangle, \mb{w}_{\sigma} \rangle}_{\mb{h}_1} + \mb{h}_2
	\end{align*}
	where
	\begin{align*}
		\mb{h}_2 = &\langle \langle (\mb{A}^l_r)^* \mb{A}^l,  \mb{a}_1  \rangle, \mb{w}^\dagger \rangle+ \langle \langle (\mb{A}^l_r)^* \mb{A}^l,  \mb{a}_2  \rangle, \mb{w}_{\sigma} \rangle + \langle \langle (\mb{A}^l_r)^* \mb{A}^l, \mb{a}_2  \rangle, \mb{w}^\dagger \rangle + \\
		&\langle \langle (\mb{A}^l_r)^* \mb{A}^l, \mb{a}_3  \rangle, \mb{w}_{\sigma} \rangle + \langle \langle (\mb{A}^l_r)^* \mb{A}^l,  \mb{a}_3  \rangle, \mb{w}^\dagger \rangle - \langle \langle (\mb{A}^l_r)^* \mb{A}^l,  (M^*)^{\otimes l}  \rangle, \mb{w}^\dagger \rangle
	\end{align*}
	The first term can be analyzed as
	\[
		\langle \langle (\mb{A}^l_r)^* \mb{A}^l,  \mb{a}_1  \rangle, \mb{w}^\dagger \rangle = \langle (\mb{A}^l_r)^* \mb{A}^l, \langle \langle \mb{P}^{\otimes l}, \mb{P}^{\otimes l} \rangle, \mb{w}_\sigma \otimes \mb{w}_\sigma \otimes \mb{w}^\dagger \rangle \rangle
	\]
	and by Lemma~\ref{lem:tensor_norm_upper}, we have that 
	\begin{equation}\label{eq:prop_kappa_fop_help2}
		\begin{aligned}
			\|\langle \langle (\mb{A}^l_r)^* \mb{A}^l,  \mb{a}_1  \rangle, \mb{w}^\dagger \rangle\|_S &\leq \|\langle \langle \mb{P}^{\otimes l}, \mb{P}^{\otimes l} \rangle, \mb{w}_\sigma \otimes \mb{w}_\sigma \otimes \mb{w}^\dagger \rangle\|_S \|(\mb{A}^l_r)^* \mb{A}^l\|_* \\
			& = \| \langle \mb{P}(\mb{w}_\sigma), \mb{P}(\mb{w}_\sigma) \rangle \otimes \mb{w}^\dagger \|_S \|(\mb{A}^l_r)^* \mb{A}^l\|_* \\
		&\leq \lambda_1^v(\mb{w})^2 \|\mb{w}^\dagger\|_S  \|(\mb{A}^l_r)^* \mb{A}^l\|_* \leq \kappa \lambda_1^v(\mb{w})^3  r^l \| \mb{A}^*\mb{A}\|_*^l
		\end{aligned}
	\end{equation}
	The second inequality follows form that for all $u_1 \in \RR^n$ and $u_2 \in \RR^{nr}$ such that $\|u_1\|_2=1$ and $ \|u_2\|_2=1$:
	\begin{align*}
		\| \langle \mb{P}(\mb{w}_\sigma), \mb{P}(\mb{w}_\sigma) \rangle \otimes \mb{w}^\dagger \|_S &= \max_{u_1, u_2} \langle \langle \mb{P}(\mb{w}_\sigma), \mb{P}(\mb{w}_\sigma) \rangle \otimes \mb{w}^\dagger, u_1^{\otimes 2l} \otimes u_2^{\otimes l}  \rangle \\
		&\leq \lambda_1^v(\mb{w})^2 (u^\top \mat(\hat x) \mat(\hat x)^\top u)^l \|\mb{w}^\dagger\|_S \\
		&\leq \lambda_1^v(\mb{w})^2 \sigma_{\max}(\mat(\hat x))^{2l} \|\mb{w}^\dagger\|_S \\
		&\leq \lambda_1^v(\mb{w})^2 \|\hat x\|^{2l}_2 \|\mb{w}^\dagger\|_S \\
		& = \lambda_1^v(\mb{w})^2 \|\mb{w}^\dagger\|_S 
	\end{align*}
	Repeating this process leads to
	\begin{equation}\label{eq:prop_kappa_fop_help3}
		\|\mb{h}_2\|_S \leq (3\kappa + 3 \kappa^2 +\kappa^3 + \kappa \|M^*\|^2_F )\lambda_1^v(\mb{w})^3 r^l \| \mb{A}^*\mb{A}\|_*^l
	\end{equation}
	Similarly, $\|\mb{h}_1\|_S  = \mathcal{O}(\lambda_1^v(\mb{w})^3 r^l \| \mb{A}^*\mb{A}\|_*^l)$. Now, if we assume that $\mb{w}$ is an FOP of \eqref{eq:lifted_main_highr}, it means that $\nabla h^l(\mb{w}) = 0$, further implying $\|\nabla h^l(\mb{w})\|_S = 0$, and by reverse triangle inequality,
	\[
		0 = \|\nabla h^l(\mb{w})\|_S \geq |\|\mb{h}_1\|_S - \|\mb{h}_2\|_S|
	\]
	which means that $\|\mb{h}_1\|_S = \|\mb{h}_2\|_S$. Since there always exits a small enough $\kappa$ such that $\|\mb{h}_2\|_S = c \|\mb{h}_1\|_S$ with $c < 1$, and therefore the only possibility that the above inequality holds true is that $\|\mb{h}_1\|_S = \|\mb{h}_2\|_S=0$. This implies
	\[
		\langle \mb{h}_1, u^{\otimes l} \rangle = (\langle \mb{A}, \mat(w_s) \mat(w_s)^\top - M^* \rangle^\top \langle \mb{A}, \mat(w_s) \mat(u)^\top  \rangle)^l = 0 \quad \forall u \in \RR^{nr}
	\]
	which is equivalent to the FOP condition for \eqref{eq:unlifted_main}, which is \eqref{eq:focp_unlifted}, meaning that $\mat(w_s) \in \RR^{n \times r}$ is an FOP of \eqref{eq:unlifted_main}. Note that we can always scale $\mb{A}$ and $b$ together so that $\| \mb{A}^*\mb{A}\|_*^l$ can be normalized to 1.
\end{proof}

Finally, we prove the main result of this paper.
\begin{proof}[Proof of Theorem~\ref{thm:approx_rank1_saddle}]
	We consider the SOP condition for \eqref{eq:lifted_main_highr}, which is \eqref{eq:socp_lifted_highr} for some rank-1 tensor $\Delta$.  We can express it as
	\begin{align*}
		\nabla^2 h^l(\mb{\hat w})[\Delta,\Delta] = &2 \underbrace{\langle \nabla f^l(\langle \mb{P}(\mb{\hat w}), \mb{P}(\mb{\hat w}) \rangle_{2*[l]}), \langle \mb{P}(\Delta), \mb{P}(\Delta) \rangle_{2*[l]}}_{\mb{a}_1(\mb{\hat w})} + \\
			&\underbrace{\| \langle \mb{A}^{\otimes l} , \langle \mb{P}(\mb{\hat w}), \mb{P}(\Delta) \rangle_{2*[l]} + \langle \mb{P}(\Delta), \mb{P}(\mb{\hat w}) \rangle_{2*[l]} \rangle \|_F^2}_{\mb{a}_2(\mb{\hat w})}
	\end{align*}
	Let $\Delta$ be defined identically to that in the proof of Theorem~\ref{thm:socp_highr}, meaning that $\Delta = \vecc(U)^{\otimes} \coloneqq u^{\otimes l}$. By the same logic of \eqref{eq:prop_kappa_fop_help1}, we have that 
	\begin{align*}
		\mb{a}_1(\mb{\hat w}) &= \langle \langle (\mb{A}^l_r)^* \mb{A}^l, \langle \mb{P}(\mb{\hat w}), \mb{P}(\mb{\hat w}) \rangle - (M^*)^{\otimes l} \rangle, \Delta \otimes \Delta \rangle \\
		&= \underbrace{\langle (\mb{A}^l_r)^* \mb{A}^l, \langle \langle \mb{P}^{\otimes l}, \mb{P}^{\otimes l} \rangle, \mb{\hat w} \otimes \mb{\hat w} \otimes \Delta \otimes \Delta \rangle \rangle}_{\mb{b}_1} - \langle (\mb{A}^l_r)^* \mb{A}^l,  (M^*)^{\otimes l} \otimes \Delta \otimes \Delta \rangle \rangle
	\end{align*}
	Since $\mb{\hat w}$ is a $\kappa$-rank-1 tensor, by denoting $\lambda_S \hat x^{\otimes l} \coloneqq \mb{w}_\sigma$, we represent
	\begin{align*}
		\mb{b}_1 = &\langle (\mb{A}^l_r)^* \mb{A}^l, \langle \langle \mb{P}^{\otimes l}, \mb{P}^{\otimes l} \rangle, \mb{w}_\sigma \otimes \mb{w}_\sigma \otimes \Delta \otimes \Delta \rangle \rangle + \\
		2 &\underbrace{\langle (\mb{A}^l_r)^* \mb{A}^l, \langle \langle \mb{P}^{\otimes l}, \mb{P}^{\otimes l} \rangle, \mb{w}_\sigma \otimes \mb{\hat w}^\dagger \otimes \Delta \otimes \Delta \rangle \rangle}_{\mb{c}_1} +\\
		&\underbrace{\langle (\mb{A}^l_r)^* \mb{A}^l, \langle \langle \mb{P}^{\otimes l}, \mb{P}^{\otimes l} \rangle, \mb{\hat w}^\dagger \otimes \mb{\hat w}^\dagger \otimes \Delta \otimes \Delta \rangle \rangle}_{\mb{c}_2}
	\end{align*}
	Hence,
	\[
		\mb{a}_1(\mb{\hat w}) = \mb{a}_1(\mb{w}_\sigma) + 2\mb{c}_1 + \mb{c}_2
	\]
	Now, we turn to $\mb{a}_2(\mb{\hat w})$. Since the sensing matrices are assumed to be symmetric, by \eqref{eq:thm_socp_highr_help1}, we have
	\begin{align*}
		\mb{a}_2(\mb{\hat w}) &= 4 \langle \langle (\mb{A}^l_r)^* \mb{A}^l, \langle \mb{P}(\mb{\hat w}), \mb{P}(\Delta) \rangle ,  \Delta \otimes \mb{\hat w} \rangle \\
		&= 4 \underbrace{\langle (\mb{A}^l_r)^* \mb{A}^l, \langle \langle \mb{P}^{\otimes l}, \mb{P}^{\otimes l} \rangle, \mb{\hat w} \otimes \Delta \otimes \mb{\hat w} \otimes \Delta \rangle \rangle}_{\mb{b}_2}
	\end{align*}
	again following the procedures in \eqref{eq:prop_kappa_fop_help1}. Given the decomposition of $\mb{\hat w}$, we decompose $\mb{b}_2$ similarly to $\mb{b_1}$:
	\begin{align*}
		\mb{b}_2 = &\langle (\mb{A}^l_r)^* \mb{A}^l, \langle \langle \mb{P}^{\otimes l}, \mb{P}^{\otimes l} \rangle, \mb{w}_\sigma \otimes \Delta \otimes \mb{w}_\sigma \otimes \Delta \rangle \rangle + \\
		&\underbrace{\langle (\mb{A}^l_r)^* \mb{A}^l, \langle \langle \mb{P}^{\otimes l}, \mb{P}^{\otimes l} \rangle, \mb{w}_\sigma \otimes \Delta  \otimes \mb{\hat w}^\dagger \otimes \Delta + \mb{\hat w}^\dagger \otimes \Delta  \otimes \mb{w}_\sigma \otimes \Delta \rangle \rangle}_{\mb{c}_3} +\\
		&\underbrace{\langle (\mb{A}^l_r)^* \mb{A}^l, \langle \langle \mb{P}^{\otimes l}, \mb{P}^{\otimes l} \rangle, \mb{\hat w}^\dagger  \otimes \Delta \otimes \mb{\hat w}^\dagger \otimes \Delta \rangle \rangle}_{\mb{c}_4}
	\end{align*}
	Combining everything together, we have
	\begin{align*}
		\nabla^2 h^l(\mb{\hat w})[\Delta,\Delta] &= \mb{a}_1(\mb{w}_\sigma) + 2\mb{c}_1 + \mb{c}_2 + \mb{a}_2(\mb{w}_\sigma) + 4 \mb{c}_3 + 4 \mb{c}_4 \\
		&= \nabla^2 h^l(\mb{w}_\sigma)[\Delta,\Delta] + 2\mb{c}_1 + \mb{c}_2 + 4 \mb{c}_3 + 4 \mb{c}_4
	\end{align*}
	In addition, following the same procedures in \eqref{eq:prop_kappa_fop_help2},
	\[
		2\mb{c}_1 + \mb{c}_2 + 4 \mb{c}_3 + 4 \mb{c}_4 \leq ( 10 \kappa + 5\kappa^2) \lambda_S^2 r^l \| \mb{A}^*\mb{A}\|_*^l
	\]
	Now, since $\mb{w}_\sigma$ is a lifted version of FOP for \eqref{eq:unlifted_main} (via Proposition~\ref{prop:tensor_rank1_breakdown}), 
	\[
		\nabla^2 h^l(\mb{w}_\sigma)[\Delta,\Delta] \leq -2G^l + \frac{2}{2^{l-1}} L^l_s \lambda_r(\hat X \hat X^\top)^l
	\]
	where $\hat X = \mat(\hat x)$ and $G \coloneqq - \lambda_{\text{min}}(\nabla f(\hat X \hat X^\top)) \geq 0$. Remember that the choice of $\Delta$ is identical. Therefore, a sufficient condition for $\nabla^2 h^l(\mb{\hat w})[\Delta,\Delta] \leq 0$ is that 
	\[
		2G^l \geq \frac{2}{2^{l-1}} L^l_s \lambda_r(\hat X \hat X^\top)^l + ( 10 \kappa + 5\kappa^2) \lambda_S^2 r^l \| \mb{A}^*\mb{A}\|_*^l
	\] 
	We can derive another sufficient condition to the above inequality, which is 
	\[
		G \geq 2^{1/l-1} L_s \lambda_r(\hat X \hat X^\top) + (5 \kappa + 5\kappa^2/2)^{1/l} \lambda_S^{2/l} r \| \mb{A}^*\mb{A}\|_*
	\]
	since $(a+b)^{1/l} \leq a^{1/l} + b^{1/l}$ for $a,b \geq 0$. Following the steps of the proof of Theorem~\ref{thm:socp_highr}, we obtain that
	\[
		\|M^* - \hat X \hat X^\top \|^2_F > 2^{1/l} \frac{L_s}{\alpha_s} \lambda_r(\hat X \hat X^\top) \tr(M^*) + \mathcal{O}(r \kappa^{1/l})
	\]
	is sufficient. Note that $\| \mb{A}^*\mb{A}\|_*$ can be rescaled to 1 easily. Following the same steps, we can set
	\[
		\beta = \frac{L_s \tr(M^*) \lambda_r(\hat X \hat X^\top)}{\alpha_s \|M^* - \hat X \hat X^\top \|^2_F - \mathcal{O}(r \kappa^{1/l})} 
	\]
	and this leads to the desirable result.
\end{proof}
\newpage

\section{Additional Experiments}\label{sec:app_exp}
In this section, we provide some additional experiments to showcase the algorithmic regularization of GD algorithm in tensor problems like \eqref{eq:lifted_main_highr}.

This section involves the decomposition of tensors along the optimization trajectory using a known algorithm, S-HOPM, as outlined in \cite{kofidis2002best}. The S-HOPM algorithms extract the dominant rank-1 component of a given tensor, so as a first step, we apply this to tensors on the trajectory, and obtain \( \mb{w}_1 \). Subsequently, this component was subtracted from the original tensor, and the extraction procedure was repeated on the resultant tensor \( \mb{w} - \mb{w}_1 \) to obtain a new component \( \mb{w}_2 \). This allows us to directly compute \( \frac{\|\mb{w}_1\|_F}{\|\mb{w}_2\|_F} \), in the hope to approximate $\lambda^v_{2}(\mb{w}_t)/\lambda^v_1(\mb{w}_t)$ for some given $t$ in the trajectory. Note that this procedure mirrors the definition of the variational eigenvalue of tensors defined in Definition~\ref{def:v_eigenvalues}. The main source of inaccuracy is that the S-HOPM algorithm may not find the real dominant rank-1 component, as specified in the original paper. Therefore, the metric we show below only serves as an approximation of $\lambda^v_{2}(\mb{w}_t)/\lambda^v_1(\mb{w}_t)$.

For a practical illustration, we focused on a problem defined in Section 6.1, characterized by a parameter \( n = 8 \). We were particularly interested in observing the evolution of the aforementioned ratio along the optimization trajectory during the process of gradient descent optimization. The results of this analysis are tabulated below:

\[
\begin{array}{|c|c|c|c|c|c|c|c|c|c|}
\hline
\text{iteration} & 20 & 40 & 60 & 80 & 100 & 120 & 140 & 160 & 180 \\
\hline
\epsilon=10^{-5} & 1.16 & 0.95 & 0.82 & 0.05 & 0.03 & 0.018 & 0.026 & 0.028 & 0.013 \\
\hline
\epsilon=10^{-3} & 0.13 & 0.43 & 0.44 & 0.031 & 0.036 & 0.0008 & 0.034 & 0.028 & 0.022 \\
\hline
\epsilon=0.1 & 0.14 & 0.02 & 0.05 & 0.034 & 0.031 & 0.026 & 0.022 & 0.034 & 0.037 \\
\hline
\end{array}
\]

This table exhibits a notable trend where the tensor gradually exhibits more of a "rank-1" nature, aligning with the assertions made in Theorem 1. Interestingly, this behavior is observed across varying initialization scales (\( \epsilon \)), indicating that the phenomenon is not restricted to smaller scales, thus broadening the potential applicability of our findings.

This ratio provides meaningful insights into the training dynamics, which further substantiates the claims made under Theorem~\ref{thm:implicit_bias_gd}.

\newpage

\section{Custom Algorithms}\label{sec:app_algos}

\begin{algorithm}[ht]

\DontPrintSemicolon
\caption{CustomGD Algorithm}

\textbf{Input:} learning\_rate, n, r, l, prob\_params, loss, g\_thres, buffer, beta, gamma, eta\_0\\
\textbf{Initialize variables:} A, b, escape\_saddle, buffer\_limit, buffer\_step

\SetKwFunction{init}{init}
\SetKwFunction{update}{update}

\SetKwProg{Fn}{Function}{}{}

\Fn{\init{starting\_point, lr}}{
    \If{lr $\neq$ 0}{
        learning\_rate $\leftarrow$ lr \tcp{Update learning rate if specified}
    }
    \Return $\{'curr\_iter': 0, 't\_noise': 0, 'curr\_w': starting\_point\}$
}

\Fn{\update{gradients, opt\_state}}{
    curr\_iter $\leftarrow$ opt\_state['curr\_iter'] + 1\;
    t\_noise $\leftarrow$ opt\_state['t\_noise']\;
    curr\_w $\leftarrow$ opt\_state['curr\_w']\;
    \If{$\lVert \text{gradients} \rVert < g\_thres$ and curr\_iter $>$ 100}{
        \If{escape\_saddle}{
            t\_noise $\leftarrow$ curr\_iter\;
            w\_s $\leftarrow$ find rank 1 component of curr\_w using tensor PCA \;
            direction $\leftarrow$ find the escape direction of w\_s \tcp{According to Theorem~\ref{thm:approx_rank1_saddle}}
            this\_eta $\leftarrow$ eta\_0\;
            \While{loss(curr\_w + this\_eta * direction) $>$ loss(curr\_w) + beta * this\_eta * inner\_product(gradients, direction)}{
                this\_eta $\leftarrow$ this\_eta * gamma \tcp{Update eta using gamma, backtracking line search}
            }
            updates $\leftarrow$ this\_eta * direction\;
            escape\_saddle $\leftarrow$ False\;
        }
        \Else{
            buffer\_step $\leftarrow$ buffer\_step + 1\;
            \If{buffer\_step $==$ buffer\_limit}{
                escape\_saddle $\leftarrow$ True\;
                buffer\_step $\leftarrow$ 0\;
            }
            updates $\leftarrow$ -learning\_rate * gradients\;
        }
    }
    \Else{
        escape\_saddle $\leftarrow$ False\;
        updates $\leftarrow$ -learning\_rate * gradients\;
    }
    \Return updates, $\{'curr\_iter': \text{curr\_iter}, 't\_noise': \text{t\_noise}, 'curr\_w': \text{curr\_w} + \text{updates}\}$
}

\end{algorithm}

\begin{algorithm}[ht]\label{algo:tensor_PCA}
\DontPrintSemicolon
\caption{Tensor PCA Algorithm}

\textbf{Input:} tensor, lr, epochs, gradnorm\_epsilon, lambd\_v, key\\

\SetKwFunction{tensorPCA}{tensor\_PCA}
\SetKwFunction{loss}{loss}
\SetKwFunction{adamOptimize}{adam\_optimize}

\SetKwProg{Fn}{Function}{}{}

\Fn{\tensorPCA{tensor, lr, epochs, gradnorm\_epsilon, lambd\_v, key}}{
    \Fn{\loss{eigenval\_eigenvec, tensor}}{
        lambd, v $\leftarrow$ eigenval\_eigenvec\;
        k $\leftarrow$ len(tensor.shape)\;
        \For{each element in tensor.shape}{
            tensor $\leftarrow$ inner(tensor, v)\;
        }
        first\_term $\leftarrow$ square(lambd) * power(norm(v), 2*k)\;
        res $\leftarrow$ first\_term - 2*lambd*tensor\;
        \Return res\;
    }
    s $\leftarrow$ tensor.shape[0]\;
    \If{lambd\_v is None}{
        v $\leftarrow$ random.normal(shape=(s,)) / sqrt(s)\;
        lambd $\leftarrow$ 0.001 * random.normal()\;
    }
    \Else{
        lambd, v $\leftarrow$ lambd\_v\;
    }
    loss, grads, lambd\_v $\leftarrow$ \adamOptimize{(loss, (lambd, v), tensor), lr, epochs, gradnorm\_epsilon}\;
    lambd, v $\leftarrow$ lambd\_v\;
    sign $\leftarrow$ sign(lambd)\;
    
    \Return sign * power(abs(lambd), 1 / len(tensor.shape)) * v\;
}

\end{algorithm}
\end{document}